\newtheorem{theorem}{Theorem}
\newtheorem{proposition}[theorem]{Proposition}
\newtheorem{lemma}[theorem]{Lemma}
\newtheorem{corollary}[theorem]{Corollary}
\newtheorem{definition}{Definition}
\newenvironment{proof}{{ \emph{Proof.} }}{\hfill $\Box$ \\}
\def\YEAR{\year}\newcount\VOL\VOL=\YEAR\advance\VOL by-1995
\def\firstpage{1}\def\lastpage{5}
\def\received{}\def\revised{}
\def\communicated{}
\def\magnification{\afterassignment\m@g\count@}
\def\m@g{\mag=\count@\hsize6.5truein\vsize8.9truein\dimen\footins8truein}
\font\eightrm=cmr8
\font\caps=cmcsc10                    % Theorem, Lemma etc
\font\Caps=cmcsc10 scaled \magstep1   % Title
\def\DocMath{}
\renewcommand{\@evenhead}{%
    \ifnum\thepage>\lastpage\rlap{\thepage}\hfill%
    \else\rlap{\thepage}\slshape\leftmark\hfill{\caps\SAuthor}\hfill\fi}%
\renewcommand{\@oddhead}{%
    \ifnum\thepage=\firstpage{\DocMath\hfill\llap{\thepage}}%
    \else{\slshape\rightmark}\hfill{\caps\STitle}\hfill\llap{\thepage}\fi}%
\def\TSkip{\bigskip}
\newbox\TheTitle{\obeylines\gdef\GetTitle #1
\ShortTitle  #2
\SubTitle    #3
\Author      #4
\ShortAuthor #5
\EndTitle
{\setbox\TheTitle=\vbox{\baselineskip=20pt\let\par=\cr\obeylines%
\halign{\centerline{\Caps##}\cr\noalign{\medskip}\cr#1\cr}}%
	\copy\TheTitle\TSkip\TSkip%
\def\next{#2}\ifx\next\empty\gdef\STitle{#1}\else\gdef\STitle{#2}\fi%
\def\next{#3}\ifx\next\empty%
    \else\setbox\TheTitle=\vbox{\baselineskip=20pt\let\par=\cr\obeylines%
    \halign{\centerline{\caps##} #3\cr}}\copy\TheTitle\TSkip\TSkip\fi%
%\setbox\TheTitle=\vbox{\let\par=\cr\obeylines%
%\halign{\centerline{\caps##} #4\cr}}\copy\TheTitle\TSkip\TSkip%
\centerline{\caps #4}\TSkip\TSkip%
\def\next{#5}\ifx\next\empty\gdef\SAuthor{#4}\else\gdef\SAuthor{#5}\fi%
\ifx\received\empty\relax
    \else\centerline{\eightrm Received: \received}\fi%
\ifx\revised\empty\TSkip%
    \else\centerline{\eightrm Revised: \revised}\TSkip\fi%
\ifx\communicated\empty\relax
    \else\centerline{\eightrm Communicated by \communicated}\fi\TSkip\TSkip%
\catcode'015=5}}\def\Title{\obeylines\GetTitle}
\def\Abstract{\begingroup\narrower
    \parskip=\medskipamount\parindent=0pt{\caps Abstract. }}
\def\EndAbstract{\par\endgroup\TSkip}
\long\def\MSC#1\EndMSC{\def\arg{#1}\ifx\arg\empty\relax\else
     {\par\narrower\noindent%
     2000 Mathematics Subject Classification: #1\par}\fi}
\long\def\KEY#1\EndKEY{\def\arg{#1}\ifx\arg\empty\relax\else
	{\par\narrower\noindent Keywords and Phrases: #1\par}\fi\TSkip}
\newbox\TheAdd\def\Addresses{\vfill\copy\TheAdd\vfill
    \ifodd\number\lastpage\vfill\eject\phantom{.}\vfill\eject\fi}
{\obeylines\gdef\GetAddress #1
\Address #2
\Address #3
\Address #4
\EndAddress
{\def\xs{4.3truecm}\parindent=0pt
\setbox0=\vtop{{\obeylines\hsize=\xs#1\par}}\def\next{#2}
\ifx\next\empty % 1 address
     \setbox\TheAdd=\hbox to\hsize{\hfill\copy0\hfill}
\else\setbox1=\vtop{{\obeylines\hsize=\xs#2\par}}\def\next{#3}
\ifx\next\empty % 2 addresses
     \setbox\TheAdd=\hbox to\hsize{\hfill\copy0\hfill\copy1\hfill}
\else\setbox2=\vtop{{\obeylines\hsize=\xs#3\par}}\def\next{#4}
\ifx\next\empty\ % 3 addresses
     \setbox\TheAdd=\vtop{\hbox to\hsize{\hfill\copy0\hfill\copy1\hfill}
                \vskip20pt\hbox to\hsize{\hfill\copy2\hfill}}
\else\setbox3=\vtop{{\obeylines\hsize=\xs#4\par}}
     \setbox\TheAdd=\vtop{\hbox to\hsize{\hfill\copy0\hfill\copy1\hfill}
	        \vskip20pt\hbox to\hsize{\hfill\copy2\hfill\copy3\hfill}}
\fi\fi\fi\catcode'015=5}}\gdef\Address{\obeylines\GetAddress}
\begin{document}
%%%%% ------------- fill in your data below this line  -------------------
%%%%%    The following lines \Title ... \EndAddress must ALL be present
%%%%%    and in the given order.
\Title
			Cohomological invariants of genus three hyperelliptic curves
%%%%%    Put here the title. Line breaks will be recognized.
\ShortTitle
			
%%%%%    Running title for odd numbered pages, ONE line, please.
%%%%%    If none is given, \Title will be used instead.
\SubTitle
			
%%%%%    A possible subtitle goes here.
\Author
		Roberto Pirisi
	
%%%%%    Put here name(s) of authors. Line breaks will be recognized.
\ShortAuthor
		   
%%%%%%   Running title for even numbered pages, ONE line, please.
%%%%%%   If none is given, \Author will be used instead.
\EndTitle

\Abstract
We compute the cohomological invariants with coefficients in $\mathbb{Z}/p\mathbb{Z}$ of the stack $\mathscr{H}_3$ of hyperelliptic curves of genus $3$ over an algebraically closed field.
%%%%%    Put here the abstract of your manuscript.
%%%%%    Avoid macros and complicated TeX expressions, as this is
%%%%%    automatically translated and posted as an html file.
\EndAbstract

\MSC
14D43, 14H10 (primary), 14F20, 14D23 (secondary).

%%%%%    2000 Mathematics Subject Classification:
\EndMSC

\KEY
Cohomological invariants, hyperelliptic curve, moduli stack, equivariant Chow rings.
%%%%%    Keywords and Phrases:
\EndKEY

%%%%%    All 4 \Address lines below must be present. To center the last
%%%%%    entry, no empty lines must be between the following \Address
%%%%%    and \EndAddress lines.
\Address
	Roberto Pirisi
	Department of Mathematics
	KTH Stockholm
	Stockholm
	Sweden
	roberto.pirisi86@gmail.com

%%%%%    Address of first Author here
\Address
	
%%%%%    Address of second Author here etc.
\Address
\Address
\EndAddress
%%
%%       Make sure the last tex command in your manuscript
%%       before the first \end{document} is the command  \Addresses
%%
%%---------------------Here the prologue ends---------------------------------
%%--------------------Here the manuscript starts------------------------------
\section*{Introduction}
%%   Use unstarred sectioning commands to obtain automatic numeration.
%% And don't type dots at the end of a heads.
%%   NEW! The number of accessible sectioning commands was highly restricted.
%% Here is the whole list: \section \subsection \subsubsection. This keeps
%% you from making the structure of text too complex.

\emph{Notation}: we fix a base field $k_0$ of characteristic different from $2,3$ and a prime number $p\neq \operatorname{char}(k_0)$. All schemes and algebraic stacks will be assumed to be of finite type over $k_0$. If $X$ is a $k_0$-scheme we will denote by $\operatorname{H}^{i}(X)$ the $i$-th \'etale cohomology group of $X$ with coefficients in $\mu_p^{\otimes i}$ (here $\mu_p^{\otimes 0}:=\mathbb{Z}/p\mathbb{Z}$), and by $\operatorname{H}^{\mbox{\tiny{$\bullet$}}}(X)$ the direct sum $\oplus_i \operatorname{H}^i(X)$. If $R$ is a $k_0$-algebra, we set $\operatorname{H}^{\mbox{\tiny{$\bullet$}}}(R)=\operatorname{H}^{\mbox{\tiny{$\bullet$}}}(\operatorname{Spec}(R))$.

\vspace{0.5cm}
Given a smooth algebraic group $\mathrm{G}$, there is a well-known theory of invariants called \emph{cohomological invariants}; examples of cohomological invariants have appeared throughout the literature since the early twentieth century \cite{Wi37}. These were later encompassed in the modern, functorial formulation. The reader can find an introduction to the classical theory of cohomological invariants in the book \cite{GMS03}, by Garibaldi, Merkurjev and Serre. The cohomological invariants of $\mathrm{G}$ form a graded ring $\operatorname{Inv}^{\mbox{\tiny{$\bullet$}}}(\mathrm{G})$. 

In \cite{Pir} the author introduced the concept of cohomological invariant of a smooth algebraic stack. Given a smooth algebraic stack $\mathscr{M}$, we can consider the functor of isomorphisms classes of its points 

\begin{center}
$P_{\mathscr{M}}: \left({\textrm{field}}/{k_0}\right) \rightarrow \left( \textit{set} \right)$
\end{center}which sends a field $K/k_0$ to the isomorphism classes of objects over $K$ in $\mathscr{M}$. Then a cohomological invariant for $\mathscr{M}$ is defined as a natural transformation

\begin{center}
$\alpha: P_{\mathscr{M}} \rightarrow \operatorname{H}^{\mbox{\tiny{$\bullet$}}}(-)$
\end{center}
satisfying a natural continuity condition.

 The cohomological invariants of $\mathscr{M}$ form a graded ring $\operatorname{Inv}^{\mbox{\tiny{$\bullet$}}}(\mathscr{M})$, and when $\mathscr{M}$ is the stack $\mathrm{BG}$ of $\mathrm{G}$-torsors for a smooth algebraic group $\mathrm{G}$, this definition of cohomological invariants retrieves the classical ring of cohomological invariants $\operatorname{Inv}^{\mbox{\tiny{$\bullet$}}}(\mathrm{G})$, that is, we have $$\operatorname{Inv}^{\mbox{\tiny{$\bullet$}}}(\mathrm{G})=\operatorname{Inv}^{\mbox{\tiny{$\bullet$}}}(\mathrm{BG}).$$

The theory set up in \cite{Pir} was used to compute the cohomological invariants of the stacks of hyperelliptic curves of all even genera in \cite{Pir2}. In this paper we compute the cohomological invariants of the stack $\mathscr{H}_3$ of hyperelliptic curves of genus three. The main result is the following:

\begin{theorem}
Suppose our base field $k_0$ is algebraically closed, of characteristic different from $2,3$. For $p=2$ the cohomological invariants of $\mathscr{H}_3$ are freely generated as an $\mathbb{F}_2$-module by $1$ and elements $x_1,x_2,w_2,x_3,x_4,x_5$, where the degree of $x_i$ is $i$ and $w_2$ is the second Stiefel-Whitney class coming from the cohomological invariants of $\mathrm{PGL}_2$.
 
 If $p \neq 2$, then the cohomological invariants of $\mathscr{H}_3$ are trivial for $p \neq 7$ and freely generated by $1$ and a single invariant of degree one for $p=7$.
\end{theorem}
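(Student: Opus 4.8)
The plan is to present $\mathscr{H}_3$ as a quotient stack and then compute its cohomological invariants by a localization (residue) argument along the discriminant, reducing everything to the invariants of a linear classifying stack plus boundary contributions indexed by the discriminant strata. Following the Arsie--Vistoli description of stacks of hyperelliptic curves, I would first write $\mathscr{H}_3 \cong [\mathbb{A}^{\mathrm{sm}}/G]$, where $\mathbb{A} = \op{Sym}^{8}$ of the dual standard representation is the space of binary octic forms (the branch divisors), $\mathbb{A}^{\mathrm{sm}} = \mathbb{A}\setminus\Delta$ is the discriminant complement, and $G$ is the structure group for \emph{odd} genus, an extension of $\mathrm{PGL}_2$ by a torus whose odd‑genus twist (the reason $g=3$ is not covered by the even‑genus computation of \cite{Pir2}) must be tracked carefully. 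Since $\mathbb{A}$ is a linear representation, $[\mathbb{A}/G]\simeq BG$, and the open immersion $\mathscr{H}_3 \hookrightarrow BG$ with complement $[\Delta/G]$ yields, via the localization sequence of \cite{Pir} applied to the codimension‑one stratum $\Delta_1$ (octics with a single double root), the exact sequence
\[
0 \longrightarrow \op{Inv}^{\bull}(BG) \longrightarrow \op{Inv}^{\bull}(\mathscr{H}_3) \stackrel{\partial}{\longrightarrow} \op{Inv}^{\bull-1}\bigl([\Delta_1^{\mathrm{sm}}/G]\bigr).
\]

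The first term supplies the base classes. As $G$ surjects onto $\mathrm{PGL}_2$ with special (connected, invariant‑trivial) kernel, $\op{Inv}^{\bull}(BG)$ reduces to $\op{Inv}^{\bull}(B\mathrm{PGL}_2)$: at $p=2$ this contributes the tautological class $1$ and the second Stiefel--Whitney class $w_2$ of the underlying conic, while for $p\neq 2$ it contributes only $1$. These are precisely the invariants that extend across $\Delta$, i.e.\ that lie in $\ker\partial$.

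The new generators should arise as residues. I would stratify $\Delta$ by the partition of root multiplicities and compute $\op{Inv}^{\bull}([\Delta_1^{\mathrm{sm}}/G])$ by describing this stratum equivariantly: it fibers over $[\mathbb{P}^1/G]$ (the position of the double root) with fiber governed by a parabolic acting on the residual binary sextic, connecting the computation to the lower‑degree (genus‑two) results. One expects $\op{Inv}^{\bull}([\Delta_1^{\mathrm{sm}}/G])$ to carry a single generator in each degree $0,\dots,4$; lifting these along $\partial$ produces $x_1,\dots,x_5$ with $\deg x_i = i$ and residue in degree $i-1$, the top class $x_5$ saturating $\deg = \dim\mathscr{H}_3 = 2g-1 = 5$. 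Combined with $1$ and $w_2$ this gives exactly the stated basis $\{1,x_1,x_2,w_2,x_3,x_4,x_5\}$, and freeness over $\mathbb{F}_2$ follows once the lifts are shown independent, e.g.\ by evaluating on suitable families or comparing with equivariant Chow groups.

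For $p\neq 2$ the base $BG$ and the generic discriminant strata contribute nothing, so any nontrivial invariant must be supported on a locus whose stabilizer has order divisible by $p$. Scanning the automorphism groups of genus‑three hyperelliptic curves, the only odd prime occurring is $p=7$, arising from the $\mathbb{Z}/7$‑symmetric curve $y^2 = x^{8}-x$ (reflecting $2g+1=7$); its $B(\mathbb{Z}/7)$‑type stabilizer yields exactly one invariant, of degree one, with nothing in higher degree since this locus has maximal codimension. The main obstacle is the third step: controlling the image of $\partial$, that is, proving the candidate residues genuinely lift to invariants of $\mathscr{H}_3$ (they are not obstructed by the deeper strata $\Delta_{\geq 2}$) and that the lifts satisfy no relation. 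This demands a delicate $G$‑equivariant analysis of the singular, stratified discriminant together with a gluing argument matching residues across strata; by contrast the odd‑$p$ statement is comparatively formal once the stabilizer stratification is in hand.
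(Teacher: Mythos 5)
Your skeleton (Arsie--Vistoli presentation, localization along the discriminant, base classes from $B\mathrm{PGL}_2$, new generators as residues along $\Delta_1$) is the same as the paper's, but the proposal stops exactly where the real work begins, and the two steps you defer are not routine. First, the exactness of your sequence on the right --- that the candidate residues in $\operatorname{Inv}^{\bull}([\Delta_1^{\mathrm{sm}}/G])$ actually lift --- is equivalent to the vanishing of the pushforward $A^0_G(\Delta_1)\to A^1_G(\mathbb{A}^9)=A^1_G(\operatorname{Spec}k_0)$, and this target is far from trivial at $p=2$: the paper must first compute $A^{\bull}_{\mathrm{PGL}_2}(\operatorname{Spec}k_0)$ via $\mathrm{SO}_3=\mathrm{O}_3/\mu_2$ (it contains $c_2,c_3$ and a class $\tau_{1,1}$ of codimension and degree $(1,1)$), and then kill the pushforward by exhibiting an explicit polynomial $c_3^{\,\cdot}f_{n-4}\cdots$ annihilating its image while acting injectively on $A^1_{\mathrm{PGL}_2}(P^n)$ --- an argument that works only for $n\le 8$. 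Nothing in your outline supplies this, and ``evaluating on suitable families'' can at best prove independence of lifts, not their existence. Second, your count ``one generator in each degree $0,\dots,4$ on $\Delta_1^{\mathrm{sm}}$, hence $x_1,\dots,x_5$, with $x_5$ saturating $\dim\mathscr{H}_3$'' conflates two different sources: in the paper $x_1,\dots,x_4$ arise from the projectivized discriminant strata, while $x_5$ exists if and only if $t\,x_4=0$ in $A^1_{\mathrm{PGL}_2}(P^8\smallsetminus\Delta_{1,8})$, where $t$ is the first Chern class of the line bundle completing the $\mathrm{G}_m$-torsor $X\to Z$. That vanishing holds over an algebraically closed field but fails to be automatic in general --- over an arbitrary field the degree-$5$ piece is only a submodule $K\subseteq\operatorname{H}^{\bull}(k_0)$, per the companion theorem --- so the existence of $x_5$ is genuinely field-dependent and cannot follow from a dimension count.

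For $p\neq 2$ your automorphism-group scan lands on the correct prime but is a heuristic, not a proof. The paper's mechanism is that $A^0_{\mathrm{PGL}_2}(\Delta_{1,8})$ is trivial for odd $p$ and the class of $\Delta_{1,8}$ in $A^1_{\mathrm{PGL}_2}(P^8)=\operatorname{H}^{\bull}(k_0)\cdot t$ is a multiple of $t$ divisible by $p$ exactly when $p\mid n-1=7$; one must then still check that $c_1(\mathcal{E})=t-2s$ is not proportional to that class, so the final $\mathrm{G}_m$-torsor contributes no further invariants. Your closing remark that there is ``nothing in higher degree since this locus has maximal codimension'' is not an argument available in this framework; the degree bound comes from the localization sequences, not from codimension of a stabilizer locus.
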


We also get a partial result for general fields, just as in \cite{Pir2}:

\begin{theorem}
Suppose our base field $k_0$ is of characteristic different from $2,3$. For $p=2$ the cohomological invariants of $\mathscr{H}_3$ fit in the exact sequence of $\operatorname{H}^{\mbox{\tiny{$\bullet$}}}(k_0)$-modules

$$0\rightarrow M \rightarrow \operatorname{Inv}^{\mbox{\tiny{$\bullet$}}}(\mathscr{H}_3) \rightarrow K \rightarrow 0$$where $K$ is isomorphic to a submodule of $\operatorname{H}^{\mbox{\tiny{$\bullet$}}}(k_0)$, shifted up in degree by $5$ and $M$ is freely generated as a $\operatorname{H}^{\mbox{\tiny{$\bullet$}}}(k_0)$-module by $1$ and $x_1,x_2,w_2,x_3,x_4$, where the degree of $x_i$ is $i$ and $w_2$ is the second Stiefel-Whitney class coming from the cohomological invariants of $\mathrm{PGL}_2$.

If $p \neq 2$, then the cohomological invariants of $\mathscr{H}_3$ are trivial for $p \neq 7$ and freely generated by $1$ and a single invariant of degree one for $p=7$.
\end{theorem}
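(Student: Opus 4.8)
The plan is to realize $\mathscr{H}_3$ as a quotient stack and then run the localization machinery for cohomological invariants from \cite{Pir}, exactly as in the even–genus computation of \cite{Pir2}; the genuinely new feature is that for odd genus the relevant structure group fails to be special. By the Arsie--Vistoli presentation of stacks of hyperelliptic curves, $\mathscr{H}_3 \cong [\widetilde{V}/G]$, where $V = \mathbb{A}^9$ is the space of binary forms of degree $8 = 2g+2$, the open subscheme $\widetilde V = V \smallsetminus \Delta$ is the locus of forms with nonzero discriminant, and $G$ arises from the $\mathrm{GL}_2$–action on the coordinates $(x,z)$ together with the scaling of $y$ in the weighted projective plane. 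The first thing I would do is compute $G$ explicitly: quotienting $\mathrm{GL}_2 \times \mathbb{G}_m$ by the redundancy $\lambda \mapsto (\lambda I,\lambda^{-(g+1)})$ gives $G \cong \mathrm{PGL}_2 \times \mathbb{G}_m$ precisely because $g=3$ is odd (for $g$ even one lands on the special group $\mathrm{GL}_2$). This is exactly the source of the class $w_2$.

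Next I would isolate the ``constant'' part of the answer. Since $\widetilde V \subset V$ and $V$ is an affine space over $BG$, homotopy invariance gives $\operatorname{Inv}^\bullet([V/G]) = \operatorname{Inv}^\bullet(BG) = \operatorname{Inv}^\bullet(B\mathrm{PGL}_2)$, the factor $\mathbb{G}_m$ being special; for $p=2$ this is freely generated over $\operatorname{H}^\bullet(k_0)$ by $1$ and $w_2$, and for $p$ odd it is trivial. The remaining generators must be produced by the discriminant, and the engine for this is the localization exact sequence of \cite{Pir},
$$0 \to \operatorname{Inv}^\bullet([V/G]) \to \operatorname{Inv}^\bullet([\widetilde V/G]) \xrightarrow{\ \partial\ } \operatorname{Inv}^{\bullet-1}(\mathscr{D}),$$
where $\mathscr{D}$ is the quotient stack attached to the generic stratum of $\Delta$, i.e. forms with a single double root. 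Stratifying $\Delta$ by the multiplicity type of the roots and feeding this sequence back into itself inductively, I expect to produce the invariants $x_1,x_2,x_3,x_4$ of degrees $1,\dots,4$, whose residues detect successive strata and which are built from the Stiefel--Whitney-type classes of the degree-$8$ branch scheme, suitably taken modulo the $\mathrm{PGL}_2$–action.

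The heart of the argument, and the point where the two theorems diverge, is controlling the boundary maps along the deepest strata and pinning down the top-degree behavior. Since $\dim \mathscr{H}_3 = 2g-1 = 5$, the top nonconstant invariant lives in degree $5$, and it is governed by the image of the residue map into the cohomological invariants of the smallest relevant stratum, which reduce to $\operatorname{H}^\bullet(k_0)$. Its image, shifted up by the stack dimension $5$, is the module $K$; everything below it splits off as the free $\operatorname{H}^\bullet(k_0)$–module $M$ on $1,x_1,x_2,w_2,x_3,x_4$, yielding the stated exact sequence $0\to M \to \operatorname{Inv}^\bullet(\mathscr{H}_3)\to K\to 0$. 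Because the entire construction is natural in the base field, specializing to algebraically closed $k_0$ — where $\operatorname{H}^\bullet(k_0)=\mathbb{F}_2$ is concentrated in degree $0$ — collapses $K$ to the single class $x_5$ in degree $5$ and recovers the algebraically closed statement; the result is only ``partial'' over general $k_0$ precisely because computing the exact image of this top residue map for an arbitrary field is what I expect to be the main obstacle, so $K$ can only be identified as \emph{some} submodule of $\operatorname{H}^\bullet(k_0)$ shifted by $5$.

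Finally, for odd $p$ the group $B\mathrm{PGL}_2$ and all the discriminant strata contribute trivially, since the Stiefel--Whitney and Brauer-type classes are $2$-torsion; the one exception is the special curve $y^2 = x^8 - x$, which carries an automorphism of order $7$, and the associated $\mathbb{Z}/7$–torsor produces, for $p=7$, a single nonconstant invariant of degree $1$ and nothing for $p\neq 2,7$. Beyond the top residue, the other technical burden throughout is the explicit evaluation of the boundary maps and the verification that the listed classes generate $M$ freely with no hidden relations, which rests on a careful analysis of the stabilizer groups appearing on each stratum.
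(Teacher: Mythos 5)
Your high-level strategy (Arsie--Vistoli presentation, localization along the stratification of the discriminant by multiplicity type) matches the paper's, but there are three concrete gaps. First, the entire difficulty of the computation is concentrated in a step you never address: to make the localization sequence degenerate into short exact sequences one must show that the pushforward $A^0_{\mathrm{PGL}_2}(\Delta_{1,n})\rightarrow A^1_{\mathrm{PGL}_2}(P^n)$ vanishes for $n\le 8$. Because $\mathrm{PGL}_2$ is not special, $A^{\bull}_{\mathrm{PGL}_2}(\operatorname{Spec}(k_0),\operatorname{H}^{\bull})$ contains nontrivial classes in positive degree ($w_2$, a class $\tau_{1,1}$ in codimension and degree $(1,1)$, and the Chern classes $c_2,c_3$), computed in the paper via $\mathrm{PGL}_2\simeq\mathrm{SO}_3$ and a stratification of the standard representation of $\mathrm{O}_3$. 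The vanishing of the pushforward is then obtained by exhibiting an explicit annihilator $g_n$ of the image (built from $c_3$ and the relations $f_{n-2r}$ in $A^{\bull}_{\mathrm{PGL}_2}(P^{n-2r})$) and checking that $f_n\nmid g_n t$ for $n\le 8$; this is exactly the point where the argument breaks for higher odd genus, so it cannot be waved through as a routine ``careful analysis of stabilizers.''

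Second, you misattribute the origin of the degree-$5$ module $K$. In the paper the classes $1,x_1,x_2,w_2,x_3,x_4$ already exhaust $A^0_{G}(P^8\smallsetminus\Delta_{1,8})$, i.e.\ the invariants of the \emph{projectivized} quotient; the extra degree-$5$ contribution arises only when passing back to $\mathscr{H}_3$ along the $\mathrm{G}_m$-torsor $[X/G]\rightarrow[Z/G]$, and $K$ is the kernel of multiplication by $c_1(\mathcal{E})=t-2s$ (which is $t$ mod $2$), shown to be contained in $\operatorname{H}^{\bull}(k_0)\cdot x_4$ and hence shifted to degree $5$ by the boundary. Your heuristic that the top invariant lives in degree $\dim\mathscr{H}_3=5$ and comes from the deepest stratum is not a proof and points at the wrong mechanism; in particular the claim that $K$ collapses to a single class $x_5$ over an algebraically closed field requires showing $tx_4=0$ there, which the paper does by a separate descending-boundary argument through the strata. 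Third, for $p\ne 2$ the paper's reason for the exceptional prime $7$ is that the class of $\Delta_{1,8}$ in $A^1_{\mathrm{PGL}_2}(P^8)=\operatorname{H}^{\bull}(k_0)\cdot t$ is divisible by $p$ exactly when $p\mid n-1=7$; your appeal to an order-$7$ automorphism of a special curve is at best a plausibility check and does not establish either the existence of the invariant for $p=7$ or the triviality for other odd $p$.
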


The computation heavily uses Rost's theory of Chow groups with coefficients \cite{Rost} and its equivariant version, which was first introduced by Guillot in \cite{Guill}. For a quick introduction to the theory the reader can refer to \cite{Guill} and \cite{Pir2}. The theory of equivariant Chow groups with coefficients is central to the computation due to the fact that for a smooth quotient stack $\left[ X /\mathrm{G} \right]$ the zero-codimensional equivariant Chow group with coefficients $A^0_{\mathrm{G}}(X,\operatorname{H}^{\mbox{\tiny{$\bullet$}}})$ is equal to the ring of cohomological invariants $\operatorname{Inv}^{\mbox{\tiny{$\bullet$}}}(\left[ X/\mathrm{G} \right])$, as proven in \cite[2.10]{Pir2}.

We use the presentation by Vistoli and Arsie \cite[4.7]{VisArs} of the stacks of hyperelliptic curves as the quotient of a smooth scheme by $\mathrm{PGL}_2 \times \mathrm{G}_m$. The stack $\mathscr{H}_3$ is presented as a quotient $\left[ U / \mathrm{PGL}_2 \times \mathrm{G}_m \right]$, where $U$ is an open subscheme of $\mathbb{A}^9$. If we see $\mathbb{A}^{9}$ as the space of binary forms $f=f(x,y)$ of degree $8$, the scheme $U$ is the open subscheme of nonzero forms with distinct roots. 

To compute the cohomological invariants we pass to the projectivized space $Z = U /\mathrm{G}_m$, where $\mathrm{G}_m$ acts by multiplication, and we introduce a stratification $P^{8} \supset \Delta_{1,8} \supset \ldots \supset \Delta_{4,8}$ which will be the base of our computation. We can see $\Delta_{i,8}$ as the closed subscheme of binary forms divisible by the square of a form of degree $i$, and we have $Z = P^{8} \! \smallsetminus \! \Delta_{1,8}$.

The main difference from \cite{Pir2} will be the fact that while for even $g$ the stacks $\mathscr{H}_g$ can be seen as quotients by an action of $\mathrm{GL}_2$, in this case we have to work with the group scheme $\mathrm{PGL}_2 \times \mathrm{G}_m$, which is substantially more complicated. We will need compute the equivariant Chow ring with coefficients $A_{\mathrm{PGL}_2}^{\mbox{\tiny{$\bullet$}}}(\operatorname{Spec}(k_0))$ which turns out to have several nontrivial elements in positive degrees. This poses a challenge, as it is often difficult to understand how these elements behave under pushforward and multiplication. To circumvent this challenge we will use techniques resembling those that the author used for the non-algebraically closed case in \cite[sec.5]{Pir2}.

\renewcommand\thesubsection{\arabic{subsection}}
\section{Some equivariant Chow groups with coefficients}
In this section we will compute some equivariant Chow groups with coefficients which will be needed as a starting point for our computations. The reason for this is the following important equality:

\begin{proposition}\label{inv-chow}
Let $\left[X/\mathrm{G}\right]$ be a quotient stack, smooth over $k_0$. Then $$A^0_\mathrm{G}(X,\operatorname{H}^{\mbox{\tiny{$\bullet$}}})=\operatorname{Inv}^{\mbox{\tiny{$\bullet$}}}(\left[X/\mathrm{G}\right]).$$
\end{proposition}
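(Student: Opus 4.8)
The plan is to build an explicit isomorphism between the two sides, after replacing the equivariant group by an honest Chow group with coefficients of a smooth scheme. Fix a representation $W$ of $G$ together with a free open $U \subseteq W$ whose complement has codimension at least $2$, so that by definition $A^0_G(X,\operatorname{H}^{\bull}) = A^0(Y,\operatorname{H}^{\bull})$ for the smooth scheme $Y := (X\times U)/G$; I may assume $Y$ irreducible, treating connected components separately. Recall that by construction $A^0(Y,\operatorname{H}^{\bull}) = \ker\bigl(\operatorname{d}\colon C^0(Y,\operatorname{H}^{\bull}) \to C^1(Y,\operatorname{H}^{\bull})\bigr)$ is the group of classes in $\operatorname{H}^{\bull}(k(Y))$ with trivial residue at every codimension-one point of $Y$, i.e. the unramified cohomology of $Y$. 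The scheme $Y$ also carries a universal object $u_Y \in [X/G](Y)$, obtained from the composite $Y \to [(X\times W)/G] \to [X/G]$.

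For the forward map $\Phi\colon \operatorname{Inv}^{\bull}([X/G]) \to A^0(Y,\operatorname{H}^{\bull})$ I would evaluate an invariant $\alpha$ on the restriction of $u_Y$ to the generic point $\eta$ of $Y$, producing a class $\alpha(u_\eta) \in \operatorname{H}^{\bull}(k(Y))$. The point to check is that this class is unramified: the continuity condition on $\alpha$ controls its behaviour under specialization along the discrete valuation rings $\mathcal{O}_{Y,x}$ at codimension-one points $x$, and I would deduce from it that each residue $\partial_x \alpha(u_\eta)$ vanishes, so that $\alpha(u_\eta)$ lies in $A^0(Y,\operatorname{H}^{\bull})$.

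For the inverse map $\Psi$ I would send $\beta \in A^0(Y,\operatorname{H}^{\bull})$ to the invariant defined as follows. Given a field $K/k_0$ and an object $\xi \in [X/G](K)$, corresponding to a $G$-torsor $P$ with equivariant map $P \to X$, the fibre $Y_\xi := Y\times_{[X/G]}\operatorname{Spec}(K)$ is the open subscheme $(P\times U)/G$ of the $K$-vector space $V_\xi = (P\times W)/G$, and its complement has codimension at least $2$ since twisting by a torsor preserves codimensions. Pulling back along $Y_\xi \to Y$ (legitimate because $Y$ is smooth) gives $\beta|_{Y_\xi}\in A^0(Y_\xi,\operatorname{H}^{\bull})$; removing the codimension-$2$ complement leaves $C^0$ and $C^1$ unchanged, so $A^0(Y_\xi,\operatorname{H}^{\bull}) = A^0(V_\xi,\operatorname{H}^{\bull})$, and the affine-bundle isomorphism for $V_\xi \to \operatorname{Spec}(K)$ identifies this with $A^0(\operatorname{Spec}(K),\operatorname{H}^{\bull}) = \operatorname{H}^{\bull}(K)$. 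I define $\beta(\xi)$ to be the resulting class; naturality in $K$ and the continuity condition follow from the functoriality of pullback. Note that this construction is manifestly independent of any choice of a $K$-point of the fibre, which is what makes it well defined.

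Finally I would verify that $\Phi$ and $\Psi$ are mutually inverse. For $\Phi\Psi = \operatorname{id}$, evaluating $\Psi(\beta)$ on the generic object $u_\eta$ recovers $\beta$ because the fibre over $u_\eta$ contains the tautological point $\eta \to Y$ and restriction there is the identity. For $\Psi\Phi = \operatorname{id}$, the key is that every object $\xi\in[X/G](K)$ is the pullback $y^*u_Y$ of the universal object along a $K$-point $y$ of the dense open $Y_\xi$, so naturality of $\alpha$ forces $\alpha(\xi) = y^*\alpha(u_Y)$. I expect the main obstacle to be precisely this interplay with the continuity condition: showing that continuity forces the residues in $\Phi$ to vanish, that the class produced by $\Psi$ genuinely satisfies continuity, and handling the existence of $K$-points of $Y_\xi$ for small fields via the behaviour of invariants under field extensions. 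Making these compatibilities precise, rather than the formal construction of $\Phi$ and $\Psi$, is where the real work lies.
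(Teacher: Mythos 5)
The paper proves this proposition only by citing \cite[2.10]{Pir2}, and your argument is essentially a correct reconstruction of the proof given there: identify $A^0_G(X,\operatorname{H}^{\bullet})$ with the unramified cohomology of the approximation $(X\times U)/G$, use the continuity condition to kill the residues of $\alpha(u_\eta)$, and invert by restricting an unramified class to the twisted vector bundle $(P\times W)/G$ sitting over each field-valued point. The items you flag as the remaining work (continuity forcing unramifiedness, continuity of $\Psi(\beta)$, and rational points of $Y_\xi$ over small fields) are precisely the ones dealt with in the cited reference, and none of them conceals an obstruction.
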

\begin{proof}
This is proven in \cite[2.10]{Pir2}.
\end{proof} 

 We begin by stating some basic facts about Chow groups with coefficients and their equivariant counterpart. A reader looking for a more in depth introduction to the theory can refer to \cite[sec.2]{Guill} and \cite[sec.1]{Pir2}.
\vspace{0.5cm}

A cycle module $M$ is a functor $M: (\mbox{Fields}/k_0) \rightarrow (\mbox{Groups})$ satisfying a long list of properties, as defined in \cite{Rost}. The two main examples of cycle modules are $M(K)=K_{\mbox{\tiny{$\bullet$}}}$, i.e. Milnor's $K$-theory (in which case the theory is more often referred as $K$-homology), and $M(K)=\operatorname{H}^{\mbox{\tiny{$\bullet$}}}(K)$. In this paper we will always be using the latter.

Let $X$ be an equidimensional scheme. This will always be the case throughout the paper. Define the group $C^i(X,M)$ of $i$-codimensional cycles as $$C^{i}(X,M)=\oplus_{P \in X^{(i)}} M(k(P))$$ where $M$ is a cycle module. Due to the properties of cycle modules there are differential maps $\operatorname{d}: C^i(X,M) \rightarrow C^{i+1}(X,M)$, forming a complex

$$0 \rightarrow C^0(X) \rightarrow C^{1}(X) \rightarrow \ldots \rightarrow C^{\operatorname{dim}(X)}(X) \rightarrow 0.$$

We define the $i$-th Chow group with coefficients $A^{i}(X,M)$ as the $i$-th homology group of the complex above. The group $A^{i}(X,M)$ has a natural double grading. An element $\tau \in C^{i}(X,M)$ is a linear combinations of elements $\alpha \in M(K)$, where $K=k(P)$ for a point $P \in X$. The \emph{codimension} of $\alpha$ is just the index $i$, and it denotes the codimension of $P$ in $X$. Cycle modules are by definition graded modules (or at least $\mathbb{Z}/2\mathbb{Z}$-graded), so we define the \emph{degree} of $\alpha$ to be its degree in $M(K)$. This double grading passes to $A^{i}(X,M)$ as elements in the same equivalence class have the same degree and codimension. 

The subgroup of elements of degree zero can be considered as the ``geometric'' part of the cycle theory; when the cycle module $M$ is equal to $K_{\mbox{\tiny{$\bullet$}}}$, the set of elements of degree zero in $A^{i}(X,K_{\mbox{\tiny{$\bullet$}}})$ is equal to the usual Chow group $\operatorname{CH}^i(X)$, and when $M$ is equal to $\operatorname{H}^{\mbox{\tiny{$\bullet$}}}$, the set of elements of degree zero in $A^{i}(X,\operatorname{H}^{\mbox{\tiny{$\bullet$}}})$ is equal to $\operatorname{CH}^i(X)\otimes_{\mathbb{Z}} \mathbb{Z}/p\mathbb{Z}$, the usual Chow group modulo $p$.

When $X$ is smooth there is a multiplication map sending a couple $(\alpha, \beta)$ of elements of codimension and degree respectively $(i,d),(i',d')$ to an element $\alpha\beta$ of codimension and degree $(i+i', d+d')$. In this case we call the graded ring with unit $A^{\mbox{\tiny{$\bullet$}}}(X,M)=\oplus_{i}A^{i}(X,M)$ the \emph{Chow ring with coefficients} of $X$.

 Given a map $X \xrightarrow{f} Y$ a pullback $f^*$ exists if $Y$ is smooth or $f$ is flat and equidimensional; if $Y$ and $X$ are smooth the pullback is a map of graded rings with unit. A pushforward $f_*$ exists if $f$ is proper, and if $Y$ is smooth the pushforward is a map of $A^{\mbox{\tiny{$\bullet$}}}(Y,M)$-modules. 
 
 Given a closed immersion $V \xrightarrow{i} X$ of codimension $c$, denote by $U$ the complement of $V$. There is a localization exact sequence $$\ldots \rightarrow A^{i-c}(V,M) \xrightarrow{i_*} A^{i}(X,M) \rightarrow A^{i}(U,M) \xrightarrow{\partial} A^{i-c+1}(V,M) \xrightarrow{i_*} \ldots$$  where the boundary map $\partial$ lowers degree by one. Finally, an affine bundle induces an isomorphism on Chow groups with coefficients, and there is a theory of Chern classes satisfying the usual properties.

\vspace{0.5cm}

In the case where $X$ is acted upon by an algebraic group $\mathrm{G}$, we can define an equivariant Chow group with coefficient $A^{i}_\mathrm{G}(X)$ by taking a representation $W$ of $\mathrm{G}$ such that $\mathrm{G}$ acts freely on an open subset $U\subset W$ whose complement has codimension higher than $i+1$. Then $\mathrm{G}$ acts freely on $X \times U$ and we define $$A^{i}_\mathrm{G}(X,M):=A^{i}((X\times U)/\mathrm{G},M)$$ where the action of $\mathrm{G}$ is the diagonal one. One can show that this groups only depend on the isomorphism class of the quotient stack $\left[ X/\mathrm{G} \right]$. When $X$ and $\mathrm{G}$ are smooth we obtain a graded ring with unit $A^{\mbox{\tiny{$\bullet$}}}_{\mathrm{G}}(X,M)= \oplus_{i}A^i_\mathrm{G}(X,M).$ All the properties mentioned above extend to the equivariant case, where instead of any morphism $f: X \rightarrow Y$ we consider only equivariant morphisms.

\vspace{0.5cm}

In the following, the cycle module we use will always be \'etale cohomology, so we will often shorten $A^i(X,\operatorname{H}^{\mbox{\tiny{$\bullet$}}})$ to $A^{i}(X)$, and $A^{i}_{\mathrm{G}}(X,\operatorname{H}^{\mbox{\tiny{$\bullet$}}})$ to $A^{i}_{\mathrm{G}}(X)$. 

Our aim is to compute some equivariant Chow groups with coefficients leading to $A^{\mbox{\tiny{$\bullet$}}}_{\mathrm{SO}_3}(\operatorname{Spec}(k_0),\operatorname{H}^{\mbox{\tiny{$\bullet$}}})$. If we consider the bilinear form 
$$\langle A, B \rangle = \operatorname{tr}(AB)$$
on the space $V$ of two by two matrices of trace zero, the conjugation action by $\mathrm{PGL}_2$ on it preserves it, and it acts with determinant $1$, inducing an isomorphism $\mathrm{PGL}_2 \simeq \mathrm{SO}(Q)$, where $Q(A)=\operatorname{tr}(A^2)$. As the form $Q$ is equivalent to a multiple of the standard form $x_1x_2 + x_3^2$ on $V$, we get an isomorphism $\mathrm{PGL}_2 \simeq \mathrm{SO}_3$, which induces an isomorphism 
$$A^{\mbox{\tiny{$\bullet$}}}_{\mathrm{SO}_3}(\operatorname{Spec}(k_0),\operatorname{H}^{\mbox{\tiny{$\bullet$}}}) \simeq A^{\mbox{\tiny{$\bullet$}}}_{\mathrm{PGL}_2}(\operatorname{Spec}(k_0),\operatorname{H}^{\mbox{\tiny{$\bullet$}}}).$$

 The latter is necessary for our computation as $\mathscr{H}_3$ can be presented as the quotient stack $\left[ U / \mathrm{PGL}_2 \times \mathrm{G}_m \right]$.

Note moreover that the equivariant Chow rings with coefficients $A^{\mbox{\tiny{$\bullet$}}}_{\mathrm{O}(Q)}(\operatorname{Spec}(k_0)$ is the same for all non-degenerate forms $Q$. When we consider the special orthogonal group, the same holds true for all non-degenerate forms with the same discriminant. This is explained in \cite[4.2]{VisMoli} for ordinary equivariant Chow groups. The same argument carries for Chow groups with coefficients. 

\vspace{0.5cm}

We begin by computing $A_{\mu_q}^{\mbox{\tiny{$\bullet$}}}(\operatorname{Spec}(k_0))$, where $q$ is a prime different from the characteristic of $k_0$.

 The cohomological invariants of $\mu_q$, which are equal to $A^{0}_{\mu_q}(\operatorname{Spec}(k_0),\operatorname{H}^{\mbox{\tiny{$\bullet$}}})$, are trivial if $p \neq q$ and are freely generated as an $\operatorname{H}^{\mbox{\tiny{$\bullet$}}}$-module by $1$ and a single invariant $\alpha$ in degree one if $p=q$. Thus $\alpha^2$ is a $\operatorname{H}^{\mbox{\tiny{$\bullet$}}}$-linear combination of $\alpha$ and $1$. More precisely, consider the element $\lbrace -1 \rbrace \in k_0/k_0^p \simeq \operatorname{H}^{1}(k_0)$ which is equal to $0$ except possibly when $p=2$. We have $\alpha^2 = \lbrace -1 \rbrace \cdot \alpha$.
 
 The ordinary Chow ring $\mathrm{CH}_{\mu_q}(\operatorname{Spec}(k_0))$ is generated as a $\mathbb{Z}$-algebra by $1$ and a single element $\xi$ of $q$-torsion, corresponding to the first Chern class of the vector bundle obtained from the representation $\mu_q \subset \mathbb{G}_m \curvearrowright \mathbb{A}^1$.
 
\begin{proposition}
Let $k$ be a field and $q$ be a prime different from the characteristic of $k_0$. 

\begin{itemize}
\item If $q\neq p$, then  $A^{\mbox{\tiny{$\bullet$}}}_{\mu_q}(\operatorname{Spec}(k_0),\operatorname{H}^{\mbox{\tiny{$\bullet$}}})$  is equal to $\operatorname{H}^{\mbox{\tiny{$\bullet$}}}(k_0)$, that is, it is generated by $1$ as a free $\operatorname{H}^{\mbox{\tiny{$\bullet$}}}(k_0)$-module.
\item If $p=q$, then  $A^{\mbox{\tiny{$\bullet$}}}_{\mu_q}(\operatorname{Spec}(k_0),\operatorname{H}^{\mbox{\tiny{$\bullet$}}})$ is $\operatorname{H}^{\mbox{\tiny{$\bullet$}}}(\operatorname{Spec}(k_0))\left[\alpha,\xi \right]/(\alpha^2-\lbrace -1 \rbrace \alpha)$. 
\end{itemize}

The element $\xi$ has codimension one and degree zero, and it comes from the ordinary Chow ring. The element $\alpha$ is an element in codimension zero and degree one, corresponding to a generator for the cohomological invariants of $\mu_q$.

\end{proposition}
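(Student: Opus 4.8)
The plan is to compute $A^{\bull}_{\mu_q}(\operatorname{Spec}(k))$ via the standard geometric approximation of the classifying stack together with the localization sequence, and then to read off the ring structure from the already-known degree-zero part $A^0 = \operatorname{Inv}^{\bull}(\mu_q)$. Concretely, I would let $\mu_q$ act on $\mathbb{A}^n$ by scalar multiplication; the action is free away from the origin, whose complement has codimension $n$, so for fixed codimension $i$ and $n\gg i$ one has $A^i_{\mu_q}(\operatorname{Spec}(k))=A^i\big((\mathbb{A}^n\smallsetminus\{0\})/\mu_q\big)$. The key geometric observation is that this quotient is the complement $L^{\times}$ of the zero section in a line bundle $L\cong\mathcal{O}(-q)$ over $\mathbb{P}^{n-1}=(\mathbb{A}^n\smallsetminus\{0\})/\mathrm{G}_m$.

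Next I would apply the localization sequence to the zero section $s\colon\mathbb{P}^{n-1}\hookrightarrow L$, which is of codimension one, with open complement $L^{\times}$. Homotopy invariance identifies $A^{\bull}(L)$ with $A^{\bull}(\mathbb{P}^{n-1})$, and the projective bundle formula gives $A^{\bull}(\mathbb{P}^{n-1})=\operatorname{H}^{\bull}(k)[h]/(h^n)$ with $h$ the hyperplane class in codimension one and degree zero. By the self-intersection formula the pushforward $s_*$ becomes, under this identification, multiplication by $c_1(L)=-q\,h$. This is the crux of the whole computation, and everything hinges on reducing it modulo $p$.

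If $q\neq p$, then $-q$ is a unit, so $s_*$ is, up to a unit, multiplication by $h$, which is an isomorphism $A^{i-1}(\mathbb{P}^{n-1})\to A^i(\mathbb{P}^{n-1})$ in the stable range; feeding this into the localization sequence collapses everything in positive codimension and leaves $A^{\bull}_{\mu_q}(\operatorname{Spec}(k_0))=\operatorname{H}^{\bull}(k_0)$. If $q=p$, then $c_1(L)=-p\,h=0$ modulo $p$, so $s_*=0$ and the sequence breaks into short exact sequences $0\to A^i(\mathbb{P}^{n-1})\to A^i(L^{\times})\xrightarrow{\partial}A^i(\mathbb{P}^{n-1})\to 0$, where $\partial$ preserves codimension and lowers degree by one. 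Passing to the limit $n\to\infty$ gives $A^{\bull}(\mathbb{P}^{\infty})=\operatorname{H}^{\bull}(k)[\xi]$ with $\xi$ the codimension-one generator, and exhibits $A^{\bull}_{\mu_p}(\operatorname{Spec}(k))$ as a free $\operatorname{H}^{\bull}(k)[\xi]$-module on two generators: the unit $1$, and a class $\alpha$ of codimension zero and degree one lifting $1\in A^0(\mathbb{P}^{\infty})$ under $\partial$, the degree-one shift coming precisely from $\partial$.

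It then remains to pin down the ring structure, which the localization sequence does not supply. The generator $\alpha$ lives in $A^0_{\mu_p}(\operatorname{Spec}(k))=\operatorname{Inv}^{\bull}(\mu_p)$, and by the description recalled just before the statement it is the tautological degree-one invariant, for which $\alpha^2=\{-1\}\alpha$ holds; combined with the module description this yields the presentation $\operatorname{H}^{\bull}(k)[\alpha,\xi]/(\alpha^2-\{-1\}\alpha)$. I expect the main obstacle to be exactly this last step: the localization sequence only computes the additive and module structure, so verifying that the extension splits as claimed and that $\alpha^2$ obeys the stated relation requires separate input from the known structure of $\operatorname{Inv}^{\bull}(\mu_p)$ and the compatibility of $\partial$ with multiplication, rather than following from the sequence itself.
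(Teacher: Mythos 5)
Your argument is correct, but it is packaged differently from the paper's. The paper runs the $\mu_q$-equivariant localization sequence for $\lbrace 0\rbrace\subset\mathbb{A}^1$ with $\mu_q$ acting by scalars: since $\left[\mathrm{G}_m/\mu_q\right]\simeq\mathrm{G}_m$ and $A^{i}(\mathrm{G}_m)=0$ for $i>0$, the known (vanishing) groups flank the map $c_1=\xi$ of the tautological character, so one gets on the nose that $\xi\colon A^{i}_{\mu_q}(\operatorname{Spec}(k))\to A^{i+1}_{\mu_q}(\operatorname{Spec}(k))$ is an isomorphism for $i\geq 1$, while the $q=p$ versus $q\neq p$ dichotomy enters through the boundary of the degree-one generator of $A^0(\mathrm{G}_m)$, which equals $q$ and forces $q\xi=0$. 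You instead work on the finite-dimensional approximation $(\mathbb{A}^n\smallsetminus\lbrace 0\rbrace)/\mu_q$, identified with the complement of the zero section of $\mathcal{O}_{P^{n-1}}(-q)$, and localize at the zero section, so the same torsion phenomenon appears as $c_1(\mathcal{O}(-q))=-qh$ being a unit multiple of $h$ or zero mod $p$. The price of your model is that for $q=p$ the unknown group sits in the middle of the sequence, so you must resolve an extension $0\to A^i(P^{n-1})\to A^i_{\mu_p}(\operatorname{Spec}(k))\to A^i(P^{n-1})\to 0$; your splitting via the known $A^0=\operatorname{Inv}^{\mbox{\tiny{$\bullet$}}}(\mu_p)$ together with the compatibility $\partial(\xi^i\alpha)=\xi^i\partial(\alpha)$ of the boundary with multiplication by classes restricted from the total space is the right way to do this, and is a step the paper's setup never has to confront. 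In both treatments the relation $\alpha^2=\lbrace -1\rbrace\alpha$ is external input from the known structure of $\operatorname{Inv}^{\mbox{\tiny{$\bullet$}}}(\mu_p)$ recalled before the statement, so your closing caveat matches what the paper actually does.
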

\begin{proof}
We consider the action of $\mu_q$ on $\mathrm{G}_m$ induced by the inclusion. This action extends linearly to $\mathbb{A}^1_k$. Then there is a long exact sequence:

\begin{center}
$ 0 \rightarrow A^0_{\mu_q}(\mathbb{A}^1_{k_0}) \rightarrow A^0_{\mu_q}(\mathrm{G}_m) \xrightarrow{\partial} A^0_{\mu_q}(\operatorname{Spec}(k_0)) \xrightarrow{c_1}   A^1_{\mu_q}(\mathbb{A}^1_{k_0}) \rightarrow \ldots $
\end{center}

Using the retraction $r$ described in \cite[section 9]{Rost} we identify $A^{\mbox{\tiny{$\bullet$}}}_{\mu_q}(\mathbb{A}^1_{k_0})$ with $A^{\mbox{\tiny{$\bullet$}}}_{\mu_q}(\operatorname{Spec}(k_0))$ and consequently the inclusion pushforward with the first Chern class, $c_1$, for the equivariant vector bundle $\mathbb{A}^1_k \rightarrow \operatorname{Spec}(k_0)$. As all the stacks here are smooth we have that the map $c_1$ is equal to multiplication by an element $\xi$ of degree zero and codimension $1$. Note now that $\left[ \mathrm{G}_m/\mu_q \right] \simeq \mathrm{G}_m$, so that $A^{\mbox{\tiny{$\bullet$}}}_{\mu_q}(\mathrm{G}_m) =A^{\mbox{\tiny{$\bullet$}}}(\mathrm{G}_m)$ which is equal to $$\operatorname{H}^{\mbox{\tiny{$\bullet$}}}(k_0) \oplus \operatorname{H}^{\mbox{\tiny{$\bullet$}}}(k_0)  \alpha$$ by \cite[2.1.1]{Guill}, where $\alpha$ is an element in codimension zero and degree one. The boundary map $\partial$ applied to this element is equal to $q$, which shows that $q \xi =0$. The computation immediately follows as $A^{i}(\mathrm{G}_m)$ is zero for $i > 0$, which shows that multiplication by $\xi$ is an isomorphism $A^{i}_{\mu_q}(\operatorname{Spec}(k_0)) \rightarrow A^{i+1}_{\mu_q}(\operatorname{Spec}(k_0))$ for each $i \geq 1$ when $p=q$, and it is always zero when $p \neq q$.
\end{proof}

The reasoning works the same for an algebraic space being acted on trivially by $\mu_q$.

\begin{lemma}\label{mu_q}
Let $X$ be an algebraic space over a field $k$, and let $\mu_q$ act trivially on it. Then $A^{\mbox{\tiny{$\bullet$}}}_{\mu_q}(X)=A^{\mbox{\tiny{$\bullet$}}}(X)\otimes_{\operatorname{H}^{\mbox{\tiny{$\bullet$}}}(k_0)} A^{\mbox{\tiny{$\bullet$}}}_{\mu_q}(\operatorname{Spec}(k_0))$.
\end{lemma}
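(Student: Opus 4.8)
The plan is to run the argument of the previous proposition verbatim, but with the base $\operatorname{Spec}(k)$ replaced by $X$. Concretely, I would let $\mu_q$ act on $X \times \mathbb{A}^1$ trivially on the first factor and through the inclusion $\mu_q \hookrightarrow \mathrm{G}_m$ (scaling) on the second, so that the zero section $X \hookrightarrow X \times \mathbb{A}^1$ is a $\mu_q$-equivariant closed immersion of codimension one with open complement $X \times \mathrm{G}_m$. This produces the equivariant localization sequence
\[
\cdots \to A^{i-1}_{\mu_q}(X) \xrightarrow{i_*} A^{i}_{\mu_q}(X \times \mathbb{A}^1) \to A^{i}_{\mu_q}(X \times \mathrm{G}_m) \xrightarrow{\partial} A^{i}_{\mu_q}(X) \to \cdots
\]
which I will analyze term by term.

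Next I would identify the three kinds of groups appearing. By homotopy invariance for the affine bundle $X \times \mathbb{A}^1 \to X$ we may replace $A^{\bull}_{\mu_q}(X \times \mathbb{A}^1)$ by $A^{\bull}_{\mu_q}(X)$, and under this identification the pushforward $i_*$ becomes multiplication by the first Chern class $\xi=c_1$, of codimension one and degree zero, of the equivariant line bundle $X \times \mathbb{A}^1 \to X$. Crucially, this bundle is the pullback of $\mathbb{A}^1 \to \operatorname{Spec}(k)$ along the structure map, so $\xi$ is the pullback of the class $\xi$ of the previous proposition; in particular $q\xi=0$. For the open part, since $\mu_q$ acts freely on $\mathrm{G}_m$ and trivially on $X$ we have $\left[ (X \times \mathrm{G}_m)/\mu_q \right] \simeq X \times \left[ \mathrm{G}_m/\mu_q \right] \simeq X \times \mathrm{G}_m$, whence $A^{\bull}_{\mu_q}(X \times \mathrm{G}_m) = A^{\bull}(X \times \mathrm{G}_m)$. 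The remaining input is the relative form of \cite[2.1.1]{Guill}: localizing $X \times \mathbb{A}^1$ along its zero section gives $A^{\bull}(X \times \mathrm{G}_m) = A^{\bull}(X) \oplus A^{\bull}(X)\alpha$ with $\alpha$ in codimension zero and degree one, pulled back from $A^{\bull}(\mathrm{G}_m)$. Because the residue is computed along the $\mathrm{G}_m$ factor, the same computation as in the proposition shows that $\partial$ kills the summand $A^{\bull}(X)$ and sends $\alpha$ to $q$.

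Feeding these identifications into the sequence, I would then split into the two cases. If $q \neq p$, then $q$ is invertible modulo $p$, so $\partial$ is surjective and $\xi=0$; the sequence collapses to give $A^{\bull}_{\mu_q}(X) = A^{\bull}(X)$, which is exactly $A^{\bull}(X) \otimes_{\operatorname{H}^{\bull}(k)} \operatorname{H}^{\bull}(k)$. If $q=p$, then $\partial(\alpha)=q=0$, so $\alpha$ lifts to a class in $A^{0}_{\mu_q}(X)$ of degree one, and the sequence breaks into short exact pieces exhibiting $A^{\bull}_{\mu_q}(X)$ as a free $A^{\bull}(X)$-module on the elements $\xi^{n}$ and $\alpha\xi^{n}$; the relation $\alpha^2=\lbrace -1 \rbrace \alpha$ is inherited by pulling back the corresponding relation over $\operatorname{Spec}(k)$. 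In both cases the outcome matches $A^{\bull}(X) \otimes_{\operatorname{H}^{\bull}(k)} A^{\bull}_{\mu_q}(\operatorname{Spec}(k))$, the comparison being realized by the product map $a \otimes b \mapsto \pi_X^{*}(a) \cdot \pi^{*}(b)$, where $\pi_X$ and $\pi$ are the two projections of $\left[ X/\mu_q \right] \simeq X \times \left[ \operatorname{Spec}(k)/\mu_q \right]$.

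The main obstacle is the bookkeeping of naturality: I must check that the classes $\xi$ and $\alpha$ and the relation $\alpha^2=\lbrace -1 \rbrace\alpha$ are genuinely pulled back from the computation over $\operatorname{Spec}(k)$, rather than merely being abstractly present. This is what upgrades the abstract extension produced by the long exact sequence into the stated tensor-product decomposition, and it is the only point where one must verify that passing from the point to $X$ introduces no new classes or relations. A harmless additional remark is that localization, homotopy invariance and Chern-class multiplication must all be invoked for algebraic spaces rather than schemes, which is fine since Rost's formalism and its equivariant extension apply in that generality.
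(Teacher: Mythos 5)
Your proposal is correct and follows essentially the same route as the paper: the same localization sequence for the zero section of $X\times\mathbb{A}^1$, the same identification $A^{\mbox{\tiny{$\bullet$}}}_{\mu_q}(X\times\mathrm{G}_m)=A^{\mbox{\tiny{$\bullet$}}}(X)\oplus A^{\mbox{\tiny{$\bullet$}}}(X)\alpha$ via $[(X\times\mathrm{G}_m)/\mu_q]\simeq X\times\mathrm{G}_m$, and the same computation that $\partial$ kills the first summand and sends $\alpha$ to $q$. Your added attention to the naturality of $\xi$, $\alpha$ and the relation $\alpha^2=\lbrace -1\rbrace\alpha$ under pullback from $\operatorname{Spec}(k)$ is a point the paper leaves implicit, but it is the right justification for the tensor-product form of the answer.
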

\begin{proof}
We consider again the exact sequence:
\begin{center}
$ 0 \rightarrow A^0_{\mu_q}(X \times \mathbb{A}^1) \xrightarrow{j^*} A^0_{\mu_q}(X \times \mathrm{G}_m) \xrightarrow{\partial} A^0_{\mu_q}(X) \xrightarrow{c_1}  A^1_{\mu_q}(X \times \mathbb{A}^1) \rightarrow \ldots $
\end{center}
As before, the quotient $\left[ (X \times \mathrm{G}_m)/\mu_q \right]$ is isomorphic to $X\times \mathrm{G}_m$, so that for its Chow groups with coefficients the formula $A^i_{\mu_q}(X \times \mathrm{G}_m) = A^i(X) \oplus A^i(X) \alpha$ holds.

 As the first component comes from the pullback through $X \times \mathrm{G}_m \rightarrow X$ and this map factors through $\left[ (X\times \mathbb{A}^1)/\mu_q \right]$ we see that the first component always belongs to the image of $j^*$, and given an element $t \cdot \alpha$ in the second component its image through the boundary map $\partial$ is equal to $q$ times $ t$. This gives us a complete understanding of the exact sequence, allowing us to conclude the proof of lemma \ref{mu_q}.
 \end{proof}
 
This also works when we have a space being acted on $G \times \mu_q$, and the action of $\mu_q$ is trivial. To prove it we need a lemma.

\begin{lemma}\label{eq1}
Let $\mathrm{G}$ be a linear algebraic group, acting on an algebraic space $X$ smooth over $k_0$, and let $\mathrm{H}$ be a normal subgroup of $\mathrm{G}$. Suppose the action of $\mathrm{H}$ on $X$ is free with quotient $X/\mathrm{H}$. Then there is a canonical isomorphism
$$ A^{\mbox{\tiny{$\bullet$}}}_\mathrm{G}(X) \simeq A^{\mbox{\tiny{$\bullet$}}}_{\mathrm{G}/\mathrm{H}} (X/\mathrm{H}).$$
\end{lemma}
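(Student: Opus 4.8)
The plan is to exhibit a single space that computes both sides, using the defining property of equivariant Chow groups with coefficients recorded in the excerpt: $A^i_G(-)$ may be computed from any representation-mixing space whose complement has sufficiently high codimension, and the resulting ring depends only on the quotient stack. Since $H \trianglelefteq G$ acts freely on $X$ with quotient $X/H$, there is a canonical isomorphism of quotient stacks $[X/G] \simeq [(X/H)/(G/H)]$, and the asserted isomorphism is the induced identification of their Chow rings with coefficients. To make this effective I would first choose the mixing space on the $G/H$ side: as $G$ is linear algebraic and $H$ is normal, the quotient $G/H$ is again a linear algebraic group (Chevalley), hence admits representations $V$ with an open $U \subseteq V$ on which $G/H$ acts freely and whose complement has arbitrarily large codimension. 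Then $A^i_{G/H}(X/H) = A^i\big(((X/H)\times U)/(G/H)\big)$ for all $i$ below the codimension bound.

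Next I would reinterpret $U$ as a mixing space for $G$ itself. Viewing $V$ as a $G$-representation through the projection $G \to G/H$, the subgroup $H$ acts trivially on $V$, hence on $U$. On $X \times U$ the group $G$ acts diagonally, and this action is free: if $g \in G$ fixes $(x,u)$ then its image $\bar g \in G/H$ fixes $u$, so $\bar g = e$ by freeness of the $G/H$-action, forcing $g \in H$; but then $gx = x$ with $H$ acting freely on $X$ gives $g = e$. Since $V \smallsetminus U$ still has high codimension, $X \times U$ is an admissible mixing space for $G$, so $A^i_G(X) = A^i\big((X\times U)/G\big)$ in the same range of degrees.

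Finally I would take the quotient in two steps. Because $H$ is normal, acts freely on $X$ and trivially on $U$, it acts freely on $X \times U$ with $(X\times U)/H = (X/H)\times U$, and consequently $(X\times U)/G = \big((X/H)\times U\big)/(G/H)$. Comparing the two computations yields $A^i_G(X) = A^i_{G/H}(X/H)$ for every $i$; letting the codimension of $V \smallsetminus U$ grow covers all degrees. Since both sides are realized as the Chow groups with coefficients of the \emph{same} smooth quotient, the identification is automatically compatible with the grading, pullbacks and products, giving the canonical ring isomorphism $A^{\bull}_G(X) \simeq A^{\bull}_{G/H}(X/H)$.

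I expect the main obstacle to be the bookkeeping that a single $V$ serves as a good mixing space simultaneously for $G/H$ acting on $X/H$ and for $G$ acting on $X$: concretely, verifying freeness of the $G$-action on $X \times U$ together with the codimension estimate, and checking that the two-step quotient — first by the free $H$-action, then by $G/H$ — exists in the category of algebraic spaces and agrees with the direct $G$-quotient. Once these geometric facts are in place the comparison of Chow groups is formal.
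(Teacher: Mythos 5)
Your argument is correct and is essentially the proof the paper invokes: the paper simply cites \cite[2.1]{VisMoli}, and that proof is exactly your construction --- take a representation $V$ of $G/H$ with a free open $U$ of high-codimension complement, view it as a $G$-representation through the projection so that $G$ acts freely on $X\times U$, and identify $(X\times U)/G$ with $((X/H)\times U)/(G/H)$. The freeness check and the two-step quotient you flag as the main obstacles are handled verbatim as in the equivariant intersection theory of Edidin--Graham, so no further work is needed.
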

\begin{proof}
The proof in \cite{VisMoli}[2.1] works without any change.
\end{proof}

\begin{corollary}\label{mu_q2}
Let $X$ be an algebraic space over a field $k$, and let $\mathrm{G}$ be an affine group acting on it. Let $\mathrm{G} \times \mu_q$ act on $X$ through the first projection $\mathrm{G} \times \mu_q \rightarrow \mathrm{G}$. Then $$A^{\mbox{\tiny{$\bullet$}}}_{\mathrm{G} \times \mu_q}(X)=A^{\mbox{\tiny{$\bullet$}}}_{\mathrm{G}}(X)\otimes_{\operatorname{H}^{\mbox{\tiny{$\bullet$}}}(k_0)} A^{\mbox{\tiny{$\bullet$}}}_{\mu_q}(\operatorname{Spec}(k_0)).$$
\end{corollary}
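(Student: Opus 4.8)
The plan is to reduce the statement, by means of Lemmas \ref{eq1} and \ref{mu_q}, to the already established computation of $A^{\bull}_{\mu_q}$ of a space carrying the trivial action. First I would fix the codimension $i$ to be computed and choose a representation $V$ of $G$ together with an open subset $U \subseteq V$ on which $G$ acts freely and whose complement has codimension larger than $i+1$, letting $\mu_q$ act trivially on $V$. Since $X \times V \to X$ is a $(G\times\mu_q)$-equivariant vector bundle and the complement of $X\times U$ in $X\times V$ has high codimension, homotopy invariance together with the localization sequence give $A^i_{G\times\mu_q}(X) = A^i_{G\times\mu_q}(X\times U)$ in the relevant range; letting the codimension of the complement grow, these identifications assemble into an isomorphism of graded $\operatorname{H}^{\bull}(k)$-algebras $A^{\bull}_{G\times\mu_q}(X) \simeq A^{\bull}_{G\times\mu_q}(X\times U)$.

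Next I would exploit that $G = G\times\{1\}$ is a normal subgroup of $G\times\mu_q$ acting freely on $X\times U$ (freely on the factor $U$), with quotient the algebraic space $X_G := (X\times U)/G$, which is smooth over $k_0$ since $X$ is. Applying Lemma \ref{eq1} with group $G\times\mu_q$, normal subgroup $G$ and space $X\times U$ yields
$$A^{\bull}_{G\times\mu_q}(X\times U) \simeq A^{\bull}_{(G\times\mu_q)/G}(X_G) = A^{\bull}_{\mu_q}(X_G),$$
where I have used $(G\times\mu_q)/G \simeq \mu_q$ and the fact that, as $\mu_q$ acted trivially on $X\times U$, the induced $\mu_q$-action on $X_G$ is again trivial. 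By construction $A^{\bull}(X_G) = A^{\bull}_G(X)$, this being exactly one of the approximating quotients computing the right-hand side.

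At this point $\mu_q$ acts trivially on $X_G$, so Lemma \ref{mu_q} applies and gives
$$A^{\bull}_{\mu_q}(X_G) = A^{\bull}(X_G)\otimes_{\operatorname{H}^{\bull}(k)} A^{\bull}_{\mu_q}(\operatorname{Spec}(k)).$$
Concatenating the three displayed identifications and substituting $A^{\bull}(X_G)=A^{\bull}_G(X)$ produces the desired isomorphism $A^{\bull}_{G\times\mu_q}(X) = A^{\bull}_G(X)\otimes_{\operatorname{H}^{\bull}(k)} A^{\bull}_{\mu_q}(\operatorname{Spec}(k))$.

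I expect the only delicate point to be the bookkeeping of the approximating representations: one must check that a single choice of approximation, namely the product of a free $G$-representation and a free $\mu_q$-representation, simultaneously computes $A^{\bull}_{G\times\mu_q}(X)$, $A^{\bull}_{\mu_q}(X_G)$ and $A^{\bull}_G(X)$ in a consistent range, so that the degreewise isomorphisms glue into an isomorphism of graded $\operatorname{H}^{\bull}(k)$-algebras compatible with the multiplicative structure rather than a mere collection of group isomorphisms. The hypotheses keep everything in sight smooth, so that the ring structures are defined and Lemma \ref{eq1} applies; granting the standard double-limit argument over representations, the remainder is the formal concatenation above.
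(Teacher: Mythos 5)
Your proposal is correct and follows essentially the same route as the paper's own proof: approximate $A^{\bull}_{G\times\mu_q}(X)$ by $A^{\bull}_{G\times\mu_q}(X\times U)$ for a suitable free open locus $U$ in a $G$-representation, apply Lemma \ref{eq1} with $G$ as the normal subgroup to descend to $A^{\bull}_{\mu_q}((X\times U)/G)$, and conclude with Lemma \ref{mu_q} since the residual $\mu_q$-action is trivial. The only addition is your explicit attention to the double-limit bookkeeping, which the paper handles implicitly.
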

\begin{proof}
It is well known that any affine algebraic group $\mathrm{G}$ is linear and thus it has a generically free representation $W$. By taking powers of $W$ and having $\mathrm{G}$ act diagonally we get a representation $V$ where $\mathrm{G}$ acts freely on an open subset $U$ whose complement has codimension $d$ for any $d$. We extend the action on $X \times V$ to a $\mathrm{G}\times \mu_q$ action via the first projection. Note that the map $X \times V \rightarrow X$ is a $\mathrm{G}\times \mu_q$ equivariant vector bundle, so $A^{\mbox{\tiny{$\bullet$}}}_{\mathrm{G}\times \mu_q}(X) \simeq A^{\mbox{\tiny{$\bullet$}}}_{\mathrm{G}\times \mu_q}(X\times V)$ and thus $A^{i}_{\mathrm{G}\times \mu_q}(X) \simeq A^{i}_{\mathrm{G}\times \mu_q}(X\times U)$ for all $i \leq d$. Then by lemma (\ref{eq1}), where the normal group is $\mathrm{G}$ we get $\simeq A^{i}_{\mathrm{G}\times \mu_q}(X\times U)=A^{i}_{\mu_q}(X\times U/\mathrm{G})$. But the action of $\mu_q$ is trivial, so we get $$A^{i}_{\mu_q}(X\times U/\mathrm{G})=A^{i}(X\times U/\mathrm{G})\otimes_{\operatorname{H}^{\mbox{\tiny{$\bullet$}}}(k_0)} A^{\mbox{\tiny{$\bullet$}}}_{\mu_q}(\operatorname{Spec}(k_0))=$$ $$=A^{i}_{\mathrm{G}}(X)\otimes_{\operatorname{H}^{\mbox{\tiny{$\bullet$}}}(k_0)} A^{\mbox{\tiny{$\bullet$}}}_{\mu_q}(\operatorname{Spec}(k_0))$$ concluding the proof.
\end{proof}

We can now compute the equivariant Chow ring  $A^{\mbox{\tiny{$\bullet$}}}_{\mathrm{O}_n}(\operatorname{Spec}(k_0))$ for $n = 2, 3$ with coefficients in $\operatorname{H}^{\mbox{\tiny{$\bullet$}}}$. This should serve as an example of how the Chow groups with coefficients can start behaving wildly even for well known objects, as elements of positive degree with no clear geometric or cohomological description appear. 

We will follow the method in \cite[4.1]{VisMoli}. First we need a few more lemmas, which are by themselves interesting facts about the equivariant approach. We begin by explicitly identifying a class of algebraic groups having the property that under specific conditions they can be ignored while computing equivariant Chow groups with coefficients. This was done in the case of ordinary equivariant Chow groups by Vistoli and Molina.

\begin{definition}\label{star}

Let $\mathrm{H}$ be a linear algebraic group. We say that $\mathrm{H}$ has the property $(*)$ if there is an isomorphism $\phi: \mathrm{H} \simeq \mathbb{A}^n_k $ of varieties such that for any field extension $k' \supseteq k$ and any element $h \in \mathrm{H}(k')$ the automorphism of $\mathbb{A}^n_k$ corresponding through $\phi$ to the action of $h$ on $\mathrm{H}_k$ by left multiplication is affine (i.e. a composition of a linear maps and a translation).
\end{definition}

A more abstract way to state the definition above is the following. Let $V$ be a finite dimensional vector space and let $\textrm{Aff}(V)$ be the semi-direct product $V\rtimes \mathrm{GL}(V)$ viewed as the algebraic group of affine transformations of $V$. Let $p: \textrm{Aff}(V) \rightarrow V$ be the projection (which is not a group homomorphsim).

Then a linear algebraic group $\mathrm{H}$ has the property $(*)$ if $\mathrm{H}$ can be embedded as a subgroup of $\textrm{Aff}(V)$ for some $V$ and additionally the composition with the projection $p$ is an isomorphism $\phi: \mathrm{H} \xrightarrow{\simeq} V$ of algebraic varieties.

\begin{lemma}\label{eq2} 
Let $\mathrm{H}$ be an a linear algebraic group satisfying property $(*)$, and let $\mathrm{G}$ be a linear algebraic group acting on $\mathrm{H}$ via group automorphisms, corresponding to linear automorphisms of $\mathbb{A}^n_k$ under $\phi$.

If $\mathrm{G}$ acts on an algebraic space $X$ smooth over $k_0$, form the semidirect product $\mathrm{G}\ltimes \mathrm{H}$ and let it act on $X$ via the projection $\mathrm{G}\ltimes \mathrm{H} \rightarrow \mathrm{G}$. Then the homomorphism 
$$ A^{\mbox{\tiny{$\bullet$}}}_\mathrm{G}(X) \rightarrow A^{\mbox{\tiny{$\bullet$}}}_{\mathrm{G}\ltimes \mathrm{H}}(X)$$
induced by the projection $\mathrm{G}\ltimes \mathrm{H} \rightarrow \mathrm{G}$ is an isomorphism.
\end{lemma}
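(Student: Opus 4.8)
The plan is to mimic the Vistoli--Molina argument \cite[4.1]{VisMoli}: I will produce an auxiliary space carrying a $G\ltimes H$-action which is at the same time an equivariant affine bundle over $X$ and a free $H$-quotient recovering $X$ with its original $G$-action. Write $\Gamma = G\ltimes H$ and let $\sigma\colon G \to \op{Aut}(H)$ be the given action, so that under $\phi\colon H \xrightarrow{\simeq} V$ each $\sigma_g$ becomes a \emph{linear} automorphism of $V$. Define a $\Gamma$-action on $X\times H$ by $(g,h)\cdot(x,a) = (g\cdot x,\ h\cdot\sigma_g(a))$, where $h\cdot(-)$ denotes left translation in $H$. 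A direct but routine check (using the convention $(g_1,h_1)(g_2,h_2)=(g_1g_2,\,h_1\sigma_{g_1}(h_2))$) shows this is indeed an action, and the projection $p\colon X\times H \to X$ is $\Gamma$-equivariant once $\Gamma$ acts on $X$ through $\Gamma \to G$.

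Two features of this action drive the proof. First, the normal subgroup $H = \{(e,h)\}\subset\Gamma$ acts by $(e,h)\cdot(x,a) = (x,ha)$, i.e.\ freely by left translation on the second factor, with quotient $(X\times H)/H = X$ carrying exactly the original $G = \Gamma/H$-action; Lemma \ref{eq1} then yields a canonical identification $A^{\bull}_\Gamma(X\times H) \simeq A^{\bull}_G(X)$, realized by an isomorphism of stacks $[X\times H / \Gamma] \simeq [X/G]$. Second, I claim $p\colon X\times H \to X$ is a $\Gamma$-equivariant affine bundle: its fibers are copies of $H \simeq \mathbb{A}^n$, and the transition map induced by $(g,h)$ is $a \mapsto h\cdot\sigma_g(a)$. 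Transporting along $\phi$, the map $\sigma_g$ is linear by hypothesis and left translation by $h$ is affine by property $(*)$, so the composite is an affine transformation of $V$. Hence $p$ has affine structure group, and the equivariant affine bundle invariance of Chow groups with coefficients gives that $p^{*}\colon A^{\bull}_\Gamma(X) \to A^{\bull}_\Gamma(X\times H)$ is an isomorphism.

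Combining the two identifications yields $A^{\bull}_\Gamma(X) \simeq A^{\bull}_\Gamma(X\times H) \simeq A^{\bull}_G(X)$, and it remains to check that this composite agrees with the map induced by $\Gamma \to G$. For this I would pass to stacks and observe that the triangle formed by $\bar p\colon [X\times H/\Gamma] \to [X/\Gamma]$, the projection $\pi\colon [X/\Gamma] \to [X/G]$ (whose pullback $\pi^{*}$ is the map in the statement), and the identification $\iota\colon [X\times H/\Gamma] \simeq [X/G]$ commutes, since all three send the class of a point $(x,a)$ to the class of $x$. Thus $\bar p^{*}\circ\pi^{*} = \iota^{*}$ is an isomorphism, and since $\bar p^{*} = p^{*}$ is an isomorphism, so is $\pi^{*}$, as desired. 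I expect the main obstacle to be the second claim of the previous paragraph: checking that $p$ is genuinely an equivariant affine bundle, i.e.\ that after mixing with a generically free representation $U$ of $\Gamma$ (which exists, as $\Gamma$ is again a linear algebraic group) the map $(X\times H\times U)/\Gamma \to (X\times U)/\Gamma$ descends to an honest affine bundle to which the homotopy invariance of \cite{Rost} applies. This is precisely where both hypotheses—property $(*)$ and the linearity of the $G$-action on $V$—are indispensable, since together they force the fiberwise action to land in $\op{Aff}(V)$.
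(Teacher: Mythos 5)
Your proof is correct and is essentially the paper's own argument: the paper proves this lemma by a one-line citation to \cite[2.3]{VisMoli}, and what you have written out --- the twisted $G\ltimes H$-action on $X\times H$, the identification of its free $H$-translation quotient with $X$ carrying the original $G$-action via Lemma \ref{eq1}, the equivariant affine bundle invariance enabled jointly by property $(*)$ and the linearity of the $G$-action, and the commuting triangle identifying the composite with the pullback along $G\ltimes H\rightarrow G$ --- is precisely the content of that reference, transported to Chow groups with coefficients. No gaps; nothing further is needed.
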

\begin{proof}
Again the argument used in \cite[2.3]{VisMoli} works for any equivariant theory defined as in \cite{EdiGra}.
\end{proof}

Recall now that when $p=2$, the ring $A^0_{\mathrm{O}_n}(\operatorname{Spec}(k_0),\operatorname{H}^{\mbox{\tiny{$\bullet$}}})=\operatorname{Inv}^{\mbox{\tiny{$\bullet$}}}(B\mathrm{O}_n)$ is freely generated as a $\operatorname{H}^{\mbox{\tiny{$\bullet$}}}(k_0)$-module by the Steifel-Whitney classes $1=w_o, w_1, \ldots, w_n $, where $w_i$ has degree $i$. This is proven in \cite{GMS03}. Moreover, the ordinary $\mathrm{O}_n$-equivariant Chow ring of a point is

$$\operatorname{CH}_{\mathrm{O}_n}^{\mbox{\tiny{$\bullet$}}}(\operatorname{Spec}(k_0))= {\mathbb{Z}\left[ c_1,\ldots, c_n\right]}/{(2c_i)_{( \textit{i odd})}} $$
Where $c_1, \ldots, c_n$ are the Chern classes of the standard representation of $\mathrm{O}_n$.

We will adjust the argument from \cite[4.1]{VisMoli}, which computes the ordinary equivariant Chow groups. Let $q$ be standard quadratic form given by $$q(x)=x_1x_{m+1} + x_2x_{m+2} +\ldots + x_{m}x_{2m}$$ when $n=2m$ and $$q(x)=x_1x_{m+1} + x_2x_{m+2} +\ldots + x_{m}x_{2m}+x_{2m+1}^2$$ when $n=2m+1$, fixed by $\mathrm{O}_n=\mathrm{O}(q)$. We begin with some general consideration before tackling the specifics of the $n=2, n=3$ cases.

\vspace{0.5cm}

 Let $V$ be the standard $n$-dimensional representation of $\mathrm{O}_n$. We want to compute $A^{\mbox{\tiny{$\bullet$}}}_{\mathrm{O}_n}(V)=A^{\mbox{\tiny{$\bullet$}}}_{\mathrm{O}_n}(\operatorname{Spec}(k_0))$. We will stratify $V$ as the union of $B=\lbrace q \neq 0 \rbrace, C=\lbrace q=0 \rbrace \! \smallsetminus \! \lbrace 0 \rbrace$ and the origin $\lbrace 0 \rbrace$.

The map $q: B \rightarrow \mathrm{G}_m$ can be trivialized by passing to the \'etale covering $\tilde{B}=\lbrace (t,v) \in \mathrm{G}_m \times B \mid t^2=q(v) \rbrace$, with $\mu_2$ acting by multiplication on the left component. We have $\tilde{B}/\mu_2=B$. Let $Q$ denote the locus where $q=1$. Then $\tilde{B}$ is isomorphic to $ \mathrm{G}_m\times Q$, the action of $\mu_2$ is the multiplication on both components and the action of $\mathrm{O}_n$ is the action on the second component. The $\mathrm{G}_m$-torsor $$\left[\tilde{B}/\mathrm{O}_n \times \mu_2 \right]=\left[ B/\mathrm{O}_n \right] \rightarrow \left[ Q/\mathrm{O}_n \times \mu_2 \right]$$ can be completed to a line bundle $\mathcal{E} \rightarrow \left[ Q/\mathrm{O}_n \times \mu_2\right]$, which corresponds to an $\mathrm{O}_n\times \mu_2$-equivariant line bundle on $Q$, so that the inclusion of the zero section gives rise to a long exact sequence

$$\ldots A^i_{\mathrm{O}_n \times \mu_2 }(Q) \rightarrow A^i_{\mathrm{O}_n \times \mu_2}(\tilde{B}) \xrightarrow{\partial}  A^i_{\mathrm{O}_n \times \mu_2 }(Q) \xrightarrow{c_1}  A^{i+1}_{\mathrm{O}_n \times \mu_2}(Q) \ldots$$
where we are identifying $A^{\mbox{\tiny{$\bullet$}}}_{\mathrm{O}_n \times \mu_2}(\mathcal{E})$ with $A^{\mbox{\tiny{$\bullet$}}}_{\mathrm{O}_n \times \mu_2 }(Q)$, which in turn identifies the pushforward through the zero section with $c_1(\mathcal{E})$.

We can see as in \cite[pp.283-284]{VisMoli} that $\mathrm{O}_n\times \mu_2$ acts transitively on $Q$ with stabilizer $\mathrm{O}_{n-1}\times \mu_2$, so we have $$A^{\mbox{\tiny{$\bullet$}}}_{\mathrm{O}_n\times \mu_2}(Q)=A^{\mbox{\tiny{$\bullet$}}}_{\mathrm{O}_{n-1}\times \mu_2}(\operatorname{Spec}(k_0)).$$

 We can now use corollary (\ref{mu_q2}). In the case of $p=2$ we get 
 $$A^{\mbox{\tiny{$\bullet$}}}_{\mathrm{O}_n\times \mu_2}(Q)=A^{\mbox{\tiny{$\bullet$}}}_{\mathrm{O}_{n-1}\times \mu_2}(\operatorname{Spec}(k_0))=A^{\mbox{\tiny{$\bullet$}}}_{\mathrm{O}_{n-1}}(\operatorname{Spec}(k_0))\left[\xi,\alpha\right]/(\alpha^2-\lbrace -1 \rbrace \alpha).$$

  When $p \neq 2$ we get $$A^{\mbox{\tiny{$\bullet$}}}_{\mathrm{O}_n\times \mu_2}(Q)=A^{\mbox{\tiny{$\bullet$}}}_{\mathrm{O}_{n-1}\times \mu_2}(\operatorname{Spec}(k_0))=A^{\mbox{\tiny{$\bullet$}}}_{\mathrm{O}_{n-1}}(\operatorname{Spec}(k_0)).$$
    The class $c_1(\mathcal{E})$ is equal to $\xi$, as shown in \cite[p.284]{VisMoli}. When $M=\operatorname{H}^{\mbox{\tiny{$\bullet$}}}$ and $p=2$ multiplication by $\xi$ is injective, so we see that $$A^{\mbox{\tiny{$\bullet$}}}_{\mathrm{O}_n}(B)= A^{\mbox{\tiny{$\bullet$}}}_{\mathrm{O}_n\times \mu_2}(\tilde{B})= A^{\mbox{\tiny{$\bullet$}}}_{\mathrm{O}_n\times \mu_2}(Q) /c_1(\mathcal{E}).$$and thus $$A^{\mbox{\tiny{$\bullet$}}}_{\mathrm{O}_n}(B)=A^{\mbox{\tiny{$\bullet$}}}_{\mathrm{O}_{n-1}}(\operatorname{Spec}(k_0))\oplus A^{\mbox{\tiny{$\bullet$}}}_{\mathrm{O}_{n-1}}(\operatorname{Spec}(k_0))\cdot \alpha.$$

 In the case $ p \neq 2 $ we no longer have the element $\alpha$ in $A^{\mbox{\tiny{$\bullet$}}}_{\mathrm{O}_n\times \mu_2}(Q)$ but the map $c_1$ is trivial as $2 \xi=0$ and $2$ is invertible, so we get again $$A^{\mbox{\tiny{$\bullet$}}}_{\mathrm{O}_n}(B)=A^{\mbox{\tiny{$\bullet$}}}_{\mathrm{O}_{n-1}}(\operatorname{Spec}(k_0)) \oplus A^{\mbox{\tiny{$\bullet$}}}_{\mathrm{O}_{n-1}}(\operatorname{Spec}(k_0))\cdot \alpha'$$ for an element $\alpha'$ in codimension zero and degree one.

Finally, $\mathrm{O}_n$ acts transitively on $C$ with stabilizer a semidirect product of $\mathrm{O}_{n-2}$ and an algebraic group satisfying the $(*)$ property of definition (\ref{star}) by \cite[p.283]{VisMoli}, so that using lemmas (\ref{eq1},\ref{eq2}) we get $A^{\mbox{\tiny{$\bullet$}}}_{\mathrm{O}_n}(C)=A^{\mbox{\tiny{$\bullet$}}}_{\mathrm{O}_{n-2}}(\operatorname{Spec}(k_0))$. Note that when $n=2$ we have $\mathrm{O}_{n-2}=\mathrm{O}_0= \lbrace 1 \rbrace$.

With this, we are ready to tackle the cases $n=2,3$.

\begin{proposition}

Suppose that $p=2$, then the Chow ring with coefficients $A^{\mbox{\tiny{$\bullet$}}}_{\mathrm{O}_2}(\operatorname{Spec}(k_0),\operatorname{H}^{\mbox{\tiny{$\bullet$}}})$ is isomorphic to
$$ A^0_{\mathrm{O}_2}(\operatorname{Spec}(k_0))\left[ c_1, c_2 \right] \oplus \operatorname{H}^{\mbox{\tiny{$\bullet$}}}(k_0)\left[ c_1,c_2 \right] \tau_{1,1}$$
Where $\tau_{1,1}$ is an element of codimension and degree $(1,1)$. The classes $c_i$ are the Chern classes of the standard representation, and the notation $\operatorname{H}^{\mbox{\tiny{$\bullet$}}}(k_0)\left[ c_1,c_2 \right] \tau_{1,1}$ means the free module generated by $\tau_{1,1}$ over the polynomial ring $\operatorname{H}^{\mbox{\tiny{$\bullet$}}}(k_0)\left[ c_1,c_2 \right]$.

Suppose that $p \neq 2$, then $A^{\mbox{\tiny{$\bullet$}}}_{\mathrm{O}_2}(\operatorname{Spec}(k_0),\operatorname{H}^{\mbox{\tiny{$\bullet$}}})$ is the tensor product of $\operatorname{H}^{\mbox{\tiny{$\bullet$}}}(k_0)$ with the ordinary equivariant Chow ring.

\end{proposition}
\begin{proof}
We'll prove the case of $p=2$. The case $p \neq 2$ can be easily done in the same way, as the same exact sequences hold.

 We already know the rings $A^{\mbox{\tiny{$\bullet$}}}_{\mathrm{O}_n}(\operatorname{Spec}(k_0))$ for $n=0,1$, all that remains is to understand the long exact sequences coming from the equivariant inclusions $C \rightarrow V \! \smallsetminus \! \lbrace 0 \rbrace$ and $\lbrace 0 \rbrace \rightarrow V$.

For $n=2$ we know that the ring $A^{\mbox{\tiny{$\bullet$}}}_{\mathrm{O}_2}(C)$ is equal to $M(k_0)$ and that the pushforward $A^{\mbox{\tiny{$\bullet$}}}_{\mathrm{O}_2}(C) \rightarrow A^{\mbox{\tiny{$\bullet$}}}_{\mathrm{O}_2}(V \! \smallsetminus \! \lbrace 0 \rbrace)$ must map it to zero as in \cite[p.285]{VisMoli} due to the projection formula, so that we get the exact sequence

$$0 \rightarrow A^i_{\mathrm{O}_2}(V \! \smallsetminus \! \lbrace 0 \rbrace) \rightarrow A^i_{\mathrm{O}_2}(B) \xrightarrow{\partial} A^i_{\mathrm{O}_2}(C) \rightarrow 0$$

 The surjectivity of the map $\partial$ forces the boundary $\partial(\alpha)$ of the element $\alpha \in A^0_{\mathrm{O}_2}(B)$ to be equal to $1$. As the map $A^{\mbox{\tiny{$\bullet$}}}_{\mathrm{O}_2}(V\! \smallsetminus \! \lbrace 0 \rbrace) \rightarrow A^{\mbox{\tiny{$\bullet$}}}_{\mathrm{O}_2}(B)$ is injective, we have $$A^{\mbox{\tiny{$\bullet$}}}_{\mathrm{O}_2}(V\! \smallsetminus \! \lbrace 0 \rbrace)=A^{\mbox{\tiny{$\bullet$}}}_{\mu_2}(\operatorname{Spec}(k_0)) \oplus \operatorname{H}^{\mbox{\tiny{$\bullet$}}}(k_0)\left[ c_1 \right] \tilde{\tau}_{1,1} \oplus \operatorname{H}^{\mbox{\tiny{$\bullet$}}}(k_0)\beta\left[ c_1 \right]$$ where $\tilde{\tau}_{1,1}$ is an element in degree and codimension $1$,and $\beta$ is an element in codimension $0$ and degree $2$, that is $$A^{\mbox{\tiny{$\bullet$}}}_{\mathrm{O}_2}(V\! \smallsetminus \! \lbrace 0 \rbrace)=A^{0}_{\mathrm{O}_2}(\operatorname{Spec}(k_0))\left[c_1\right] \oplus \operatorname{H}^{\mbox{\tiny{$\bullet$}}}(k_0)\tau_{1,1}\left[c_1\right].$$

Observe now that the map $A^{\mbox{\tiny{$\bullet$}}}_{\mathrm{O}_2}(V) \rightarrow A^{\mbox{\tiny{$\bullet$}}}_{\mathrm{O}_2}(V\! \smallsetminus \! \lbrace 0 \rbrace)$ is a map of rings and it is surjective in codimension $0$ (as $\lbrace 0 \rbrace$ has codimension $2$) and in degree $0$ (by \cite[pp.285-286]{VisMoli}) for all codimensions; consider the exact sequence induced by the inclusion $\lbrace 0 \rbrace \rightarrow V$
$$ \ldots \rightarrow A^i_{\mathrm{O}_2}(V) \xrightarrow{j} A^i_{\mathrm{O}_2}(V\! \smallsetminus \! \lbrace 0 \rbrace) \xrightarrow{\partial} A^{i-1}_{\mathrm{O}_2}(\lbrace 0 \rbrace) \xrightarrow{c_2}  A^{i+1}_{\mathrm{O}_2}(V) \rightarrow \ldots $$ where the map $c_2$ is the second Chern class $c_2(V)$. We can see that $\tau_{1,1}$ must be in the image of $j:A^i_{\mathrm{O}_2}(V) \rightarrow A^i_{\mathrm{O}_2}(V\! \smallsetminus \! \lbrace 0 \rbrace)$ as the second Chern class $c_2$ is injective in degree zero, so we must have $\partial(\tau_{1,1})=0$. Then the map of rings $A^{\mbox{\tiny{$\bullet$}}}_{\mathrm{O}_2}(V) \rightarrow A^{\mbox{\tiny{$\bullet$}}}_{\mathrm{O}_2}(V\! \smallsetminus \! \lbrace 0 \rbrace)$ must be surjective, as all generators of $A^{\mbox{\tiny{$\bullet$}}}_{\mathrm{O}_2}(V\! \smallsetminus \! \lbrace 0 \rbrace)$ belong to the image. Thus we get the exact sequence
 
$$0 \rightarrow A^{i-1}_{\mathrm{O}_2}(\lbrace 0 \rbrace) \xrightarrow{c_2} A^{i+1}_{\mathrm{O}_2}(V) \rightarrow A^{i+1}_{\mathrm{O}_2}(V\! \smallsetminus \! \lbrace 0 \rbrace) \rightarrow 0.$$
The exact sequence tells us that multiplication by the second Chern class $c_2$ is injective in $A^{\mbox{\tiny{$\bullet$}}}_{\mathrm{O}_2}(\operatorname{Spec}(k_0))$ and that the quotient by the ideal generated by $c_2$ is equal to $A^{\mbox{\tiny{$\bullet$}}}_{\mathrm{O}_2}(V\! \smallsetminus \! \lbrace 0 \rbrace)$. Then the ring $A^{\mbox{\tiny{$\bullet$}}}_{\mathrm{O}_2}(\operatorname{Spec}(k_0))$ is generated by the generators of $A^{\mbox{\tiny{$\bullet$}}}_{\mathrm{O}_2}(V\! \smallsetminus \! \lbrace 0 \rbrace)$ and $c_2$, and concluding the proof is an easy computation.
\end{proof}

\begin{proposition}
Suppose $p=2$. 
 We have
$$ A^{\mbox{\tiny{$\bullet$}}}_{\mathrm{O}_3}(\operatorname{Spec}(k_0),\operatorname{H}^{\mbox{\tiny{$\bullet$}}}) = A^0_{\mathrm{O}_3}(\operatorname{Spec}(k_0))\left[ c_1,c_2 ,c_3 \right] \oplus \operatorname{H}(k_0)\left[ c_1, c_2, c_3 \right] \tau_{1,1}$$
where again $\tau_{1,1}$ is an element of codimension and degree $(1,1)$.

Suppose $p \neq 2$. Then $A^{\mbox{\tiny{$\bullet$}}}_{\mathrm{O}_3}(\operatorname{Spec}(k_0),\operatorname{H}^{\mbox{\tiny{$\bullet$}}})$ is equal to the tensor product of $\operatorname{H}^{\mbox{\tiny{$\bullet$}}}(k_0)$ with the ordinary equivariant Chow ring.
\end{proposition}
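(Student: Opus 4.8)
The plan is to follow verbatim the strategy used above for $\mathrm{O}_2$, feeding into it the general analysis of the stratification $V = B \cup C \cup \lbrace 0 \rbrace$ of the standard representation that was carried out for all $n$ before the $\mathrm{O}_2$ proposition. Two ingredients are thus already in hand. For $p=2$ the computation of $B$ gives
$$A^{\bull}_{\mathrm{O}_3}(B) = A^{\bull}_{\mathrm{O}_2}(\operatorname{Spec}(k)) \oplus A^{\bull}_{\mathrm{O}_2}(\operatorname{Spec}(k)) \cdot \alpha,$$
where $A^{\bull}_{\mathrm{O}_2}(\operatorname{Spec}(k))$ is the ring of the previous proposition, and the computation of $C$ gives $A^{\bull}_{\mathrm{O}_3}(C) = A^{\bull}_{\mathrm{O}_1}(\operatorname{Spec}(k)) = A^{\bull}_{\mu_2}(\operatorname{Spec}(k)) = \operatorname{H}^{\bull}(k)[\alpha,\xi]/(\alpha^2 - \lbrace -1 \rbrace \alpha)$. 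All that remains is to resolve the two localization sequences attached to the equivariant inclusions $C \hookrightarrow V \smallsetminus \lbrace 0 \rbrace$ and $\lbrace 0 \rbrace \hookrightarrow V$.

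First I would treat the inclusion $C \hookrightarrow V \smallsetminus \lbrace 0 \rbrace$, of codimension one. Exactly as in the $\mathrm{O}_2$ case, the projection formula argument of \cite{VisMoli} forces the pushforward $A^{\bull}_{\mathrm{O}_3}(C) \to A^{\bull}_{\mathrm{O}_3}(V \smallsetminus \lbrace 0 \rbrace)$ to vanish, so the sequence collapses to
$$0 \to A^i_{\mathrm{O}_3}(V \smallsetminus \lbrace 0 \rbrace) \to A^i_{\mathrm{O}_3}(B) \xrightarrow{\partial} A^i_{\mathrm{O}_3}(C) \to 0.$$
Surjectivity of $\partial$ again pins down $\partial(\alpha) = 1$, and by computing the boundary on the whole free $A^{\bull}_{\mathrm{O}_2}$-summand generated by $\alpha$ I can identify $A^{\bull}_{\mathrm{O}_3}(V \smallsetminus \lbrace 0 \rbrace)$ with $\ker \partial$, which should assemble into a description strictly analogous to the one obtained for $A^{\bull}_{\mathrm{O}_2}(V \smallsetminus \lbrace 0 \rbrace)$, now built over $A^{\bull}_{\mathrm{O}_2}(\operatorname{Spec}(k))$ and carrying the low-degree generator $\tau_{1,1}$.

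Next I would exploit the inclusion $\lbrace 0 \rbrace \hookrightarrow V$, now of codimension three, giving
$$\ldots \to A^{i-3}_{\mathrm{O}_3}(\lbrace 0 \rbrace) \xrightarrow{c_3} A^i_{\mathrm{O}_3}(V) \xrightarrow{j} A^i_{\mathrm{O}_3}(V \smallsetminus \lbrace 0 \rbrace) \xrightarrow{\partial} A^{i-2}_{\mathrm{O}_3}(\lbrace 0 \rbrace) \to \ldots$$
in which the pushforward of the zero section is the top Chern class $c_3(V)$. As in the $\mathrm{O}_2$ argument I would show that each generator of $A^{\bull}_{\mathrm{O}_3}(V \smallsetminus \lbrace 0 \rbrace)$, in particular $\tau_{1,1}$, lies in the image of $j$ because $c_3$ is injective in degree zero; this forces every boundary into $A^{\bull}_{\mathrm{O}_3}(\lbrace 0 \rbrace)$ to vanish, makes $j$ surjective, and yields
$$0 \to A^{i-3}_{\mathrm{O}_3}(\lbrace 0 \rbrace) \xrightarrow{c_3} A^{i}_{\mathrm{O}_3}(V) \to A^i_{\mathrm{O}_3}(V \smallsetminus \lbrace 0 \rbrace) \to 0,$$
identifying $c_3$ as a regular element with $A^{\bull}_{\mathrm{O}_3}(\operatorname{Spec}(k)) / (c_3) \cong A^{\bull}_{\mathrm{O}_3}(V \smallsetminus \lbrace 0 \rbrace)$. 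Adjoining $c_3$ to the generators of $A^{\bull}_{\mathrm{O}_3}(V \smallsetminus \lbrace 0 \rbrace)$ then produces the stated ring. The case $p \neq 2$ follows from the same two sequences: there the degree-one class $\beta$ in $A^{\bull}_{\mathrm{O}_3}(B)$ surjects under $\partial$ onto the generator of $A^{\bull}_{\mathrm{O}_3}(C) = A^{\bull}_{\mathrm{O}_1}(\operatorname{Spec}(k)) = \operatorname{H}^{\bull}(k)$ and is thereby eliminated, while $c_1(\mathcal{E}) = \xi$ acts trivially since $2\xi = 0$, so no class of positive degree survives and the ring reduces to $\operatorname{H}^{\bull}(k_0)$ tensored with the ordinary equivariant Chow ring.

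The main obstacle I anticipate lies in the first step. Unlike the $\mathrm{O}_2$ situation, the base ring $A^{\bull}_{\mathrm{O}_2}(\operatorname{Spec}(k))$ already carries the anomalous mixed generator $\tau_{1,1}$ together with two Chern classes, so evaluating $\partial$ on the full $\alpha$-summand $A^{\bull}_{\mathrm{O}_2}(\operatorname{Spec}(k)) \cdot \alpha$, and then checking that $\ker \partial$ is free on the advertised generators with the correct compatibilities between the degree-one classes and $c_1, c_2, c_3$, is where genuine care is required. In particular I must rule out any unexpected relations among the three Chern classes and $\tau_{1,1}$, and confirm that multiplication by $c_3$ stays injective on the entire ring and not merely in degree zero.
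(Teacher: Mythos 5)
Your overall architecture coincides with the paper's: stratify $V$ as $B\cup C\cup\{0\}$, import $A^{\bull}_{\mathrm{O}_3}(B)$ and $A^{\bull}_{\mathrm{O}_3}(C)$ from the general discussion preceding the $\mathrm{O}_2$ computation, and resolve the two localization sequences. The second sequence (for $\{0\}\hookrightarrow V$) and the $p\neq 2$ case are handled essentially as in the paper. But there is a genuine gap at the first step. You assert that, "exactly as in the $\mathrm{O}_2$ case," the projection formula of \cite{VisMoli} forces the pushforward $i_*:A^{\bull}_{\mathrm{O}_3}(C)\to A^{\bull+1}_{\mathrm{O}_3}(V\smallsetminus\{0\})$ to vanish. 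For $\mathrm{O}_2$ this works because $A^{\bull}_{\mathrm{O}_2}(C)=\operatorname{H}^{\bull}(k)$, so every class is $\lambda\cdot 1$ with $\lambda$ restricted from the ambient space, and $i_*(\lambda\cdot 1)=\lambda\, i_*(1)=0$. For $\mathrm{O}_3$, however, $A^{\bull}_{\mathrm{O}_3}(C)\cong A^{\bull}_{\mu_2}(\operatorname{Spec}(k))$ contains, when $p=2$, the codimension-zero degree-one generator $\alpha$ of $\operatorname{Inv}(\mu_2)$, which is \emph{not} of the form $i^*(x)\cdot 1$; the projection formula says nothing about $i_*(\alpha)$, and the vanishing on ordinary Chow groups only controls the classes pulled back from $V\smallsetminus\{0\}$. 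This is precisely the point you would need to supply, and it is the one place where the $\mathrm{O}_3$ proof genuinely differs from the $\mathrm{O}_2$ one.

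The paper closes this gap by a degree count: $A^{0}_{\mathrm{O}_3}(V\smallsetminus\{0\})\cong A^{0}_{\mathrm{O}_3}(V)=\operatorname{Inv}(\mathrm{O}_3)$ is free on $1,w_1,w_2,w_3$, while $A^{0}_{\mathrm{O}_3}(B)\cong A^{0}_{\mathrm{O}_2}(\operatorname{Spec}(k_0))\oplus A^{0}_{\mathrm{O}_2}(\operatorname{Spec}(k_0))\,t$ is free on $1,w_1,t,w_1t,w_2,w_2t$; hence the cokernel of the (injective) restriction contains a free summand generated in degree two. Since the kernel of $\partial:A^{0}_{\mathrm{O}_3}(B)\to A^{0}_{\mathrm{O}_3}(C)$ is exactly the image of the restriction, $\partial$ is injective on this cokernel and must send the degree-two generator to the unique nonzero degree-one element of $A^{0}_{\mu_2}(\operatorname{Spec}(k))$, namely $\alpha$. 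Thus $\alpha\in\operatorname{im}\partial=\ker i_*$, and combined with the ordinary-Chow-group vanishing this kills the whole pushforward and yields your short exact sequence. Without this (or an equivalent) argument, everything downstream of your first step is unjustified. A minor additional remark: in the second sequence, the surjectivity of $j$ on the class $\tau_{1,1}$ is most cleanly justified by noting that $\{0\}$ has codimension $3$, so restriction is already an isomorphism in codimensions $1$ and $2$; the "injectivity of the top Chern class in degree zero" phrasing is an artifact of the $\mathrm{O}_2$ case.
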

\begin{proof}
We prove the case $p=2$. The case $p \neq 2$ is much easier and can be proven using the same arguments, as the same exact sequences hold.

 For $n=3$, we need to consider the same exact sequences as above. First we have the one coming from the inclusion $C \rightarrow V_3 \! \smallsetminus \! \lbrace 0 \rbrace$:

$$ \ldots \rightarrow A^i_{\mathrm{O}_3}(V\! \smallsetminus \! \lbrace 0 \rbrace) \rightarrow A^i_{\mathrm{O}_3}(B) \xrightarrow{\partial} A^{i}_{\mathrm{O}_3}(C) \rightarrow A^{i+1}_{\mathrm{O}_3}(V\! \smallsetminus \! \lbrace 0 \rbrace) \rightarrow \ldots .$$

The map $A^{i}_{\mathrm{O}_3}(C) \rightarrow A^{i+1}_{\mathrm{O}_3}(V\! \smallsetminus \! \lbrace 0 \rbrace)$ is zero on ordinary Chow groups by \cite{VisMoli}, and we have $A^{i}_{\mathrm{O}_3}(C) \simeq A^{i}_{\mu_2}(\operatorname{Spec}(k_0))$, so we only have to prove that the generator for the cohomological invariants of $\mu_2$ goes to zero.
 To see that, note that $A^0_{\mathrm{O}_3}(V\! \smallsetminus \! \lbrace 0 \rbrace)$ is isomorphic to $A^0_{\mathrm{O}_3}(V)$ which is in turn equal to $\operatorname{Inv}(\mathrm{O}_3)$. So it is a free $\operatorname{H}^{\mbox{\tiny{$\bullet$}}}(\operatorname{Spec}(k_0))$-module of rank three, generated by the Stiefel-Whitney classes $w_1,w_2,w_3$, of degree respectively $1,2,3$.
 
  On the other hand, $A^0_{\mathrm{O}_3}(B)\simeq A^0_{\mathrm{O}_2}(\operatorname{Spec}(k_0))\oplus  A^0_{\mathrm{O}_2}(\operatorname{Spec}(k_0)) \alpha$ is generated as a free $\operatorname{H}^{\mbox{\tiny{$\bullet$}}}(\operatorname{Spec}(k_0))$-module by $w_1,\alpha,w_1 \alpha, w_2,w_2 \alpha$.
  Then the cokernel of the restriction map induced by $B \rightarrow V \!\smallsetminus \! \lbrace 0 \rbrace$ must contain a free $\operatorname{H}^{\mbox{\tiny{$\bullet$}}}(\operatorname{Spec}(k_0))$-module generated by an element in degree two. The boundary map $\partial$ must send it to a generator for the cohomological invariants of $\mu_2$ as it is the only element of degree one in there. This shows that the pushforward $ A^{i}_{\mathrm{O}_3}(C) \rightarrow A^{i+1}_{\mathrm{O}_3}(V\! \smallsetminus \! \lbrace 0 \rbrace)$ is zero, so we have the exact sequence

$$0 \rightarrow A^i_{\mathrm{O}_3}(V \! \smallsetminus \! \lbrace 0 \rbrace) \rightarrow A^i_{\mathrm{O}_3}(B) \rightarrow A^{i}_{\mathrm{O}_3}(C) \rightarrow 0.$$which tells us that $A^{\mbox{\tiny{$\bullet$}}}_{\mathrm{O}_3}(V \! \smallsetminus \! \lbrace 0 \rbrace) \simeq A^0_{\mathrm{O}_3}(\operatorname{Spec}(k_0),\operatorname{H}^{\mbox{\tiny{$\bullet$}}})\left[c_1,c_2 \right] \oplus \operatorname{H}^{\mbox{\tiny{$\bullet$}}}(k_0)\tau_{1,1}\left[ c_1, c_2 \right]$.

Now we consider the last exact sequence. As before, the map of rings $A^{\mbox{\tiny{$\bullet$}}}_{\mathrm{O}_3}(V) \rightarrow A^{\mbox{\tiny{$\bullet$}}}_{\mathrm{O}_3}(V \! \smallsetminus \! \lbrace 0 \rbrace)$ must be surjective. We know that it is surjective in degree $0$ by \cite[pp.285-286]{VisMoli}, and it induces an isomorphism in codimension $1$ and $2$. Then we have the exact sequence

$$0 \rightarrow A^{i-2}_{\mathrm{O}_3}(\lbrace 0 \rbrace) \xrightarrow{c_3} A^{i+1}_{\mathrm{O}_3}(V) \rightarrow A^{i+1}_{\mathrm{O}_3}(V\! \smallsetminus \! \lbrace 0 \rbrace) \rightarrow 0$$which again shows that the ring $A^{\mbox{\tiny{$\bullet$}}}_{\mathrm{O}_3}(V)$ is generated by $A^{\mbox{\tiny{$\bullet$}}}_{\mathrm{O}_3}(V \! \smallsetminus \lbrace 0 \rbrace)$ and $c_3$, and that multiplication by $c_3$ is injective. Using this a simple computation allows us to conclude. 

\end{proof}

\begin{corollary}\label{SO3}
Suppose $p=2$. We have $$A^{\mbox{\tiny{$\bullet$}}}_{\mathrm{SO}_3}(\operatorname{Spec}(k_0),\operatorname{H}^{\mbox{\tiny{$\bullet$}}})=A^0_{\mathrm{SO}_3}(\operatorname{Spec}(k_0),\operatorname{H}^{\mbox{\tiny{$\bullet$}}})\left[ c_2, c_3 \right] \oplus\operatorname{H}^{\mbox{\tiny{$\bullet$}}}(k_0)\left[ c_2, c_3 \right] \tau_{1,1}.$$

Suppose $p\neq 2$. Then $$A^{\mbox{\tiny{$\bullet$}}}_{\mathrm{SO}_3}(\operatorname{Spec}(k_0),\operatorname{H}^{\mbox{\tiny{$\bullet$}}})=\operatorname{H}^{\mbox{\tiny{$\bullet$}}}(k_0)\otimes \operatorname{CH}^{\mbox{\tiny{$\bullet$}}}_{\mathrm{SO}_3}(\operatorname{Spec}(k_0)).$$

\end{corollary}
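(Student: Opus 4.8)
The guiding idea is that in odd rank the orthogonal group splits off its centre. Since $-\mathrm{Id}$ is central with $\det(-\mathrm{Id})=(-1)^3=-1$, the map $\mathrm{SO}_3\times\mu_2\to\mathrm{O}_3$ sending $(g,\varepsilon)$ to $\varepsilon g$ is an isomorphism of group schemes: it is injective because $\mathrm{SO}_3\cap\{\pm\mathrm{Id}\}=\{\mathrm{Id}\}$, and surjective because $\det g=-1$ forces $-g\in\mathrm{SO}_3$. Hence $B\mathrm{O}_3\simeq B\mathrm{SO}_3\times B\mu_2$, and I would apply corollary \ref{mu_q2} with $G=\mathrm{SO}_3$, $q=2$ to obtain
$$A^{\bull}_{\mathrm{O}_3}(\operatorname{Spec}(k_0))=A^{\bull}_{\mathrm{SO}_3}(\operatorname{Spec}(k_0))\otimes_{\operatorname{H}^{\bull}(k_0)}A^{\bull}_{\mu_2}(\operatorname{Spec}(k_0)).$$
Everything on the right other than $A^{\bull}_{\mathrm{SO}_3}$ is already known --- $A^{\bull}_{\mathrm{O}_3}$ from the previous proposition and $A^{\bull}_{\mu_2}$ from the proposition on $\mu_q$ --- so the problem becomes one of dividing out the $\mu_2$-factor.

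Before doing so I would pin down the two $\mu_2$-classes inside the $\mathrm{O}_3$-computation. The degree-one invariant $\alpha$ is the class $w_1$, i.e. the determinant $\mathrm{O}_3\to\mu_2$, which is exactly the second projection of the splitting. The codimension-one, degree-zero generator $\xi$ is the first Chern class $c_1$: restricting the standard representation $V$ along $\mathrm{SO}_3\times\mu_2$ gives $V_{\mathrm{SO}_3}\otimes L$ with $L$ the sign character, so $c_1(V)=c_1(V_{\mathrm{SO}_3})+3\,c_1(L)=3\xi=\xi$ modulo $2$. Writing $A^{\bull}_{\mu_2}=\operatorname{H}^{\bull}(k_0)[\alpha,\xi]/(\alpha^2-\{-1\}\alpha)$, the identification above reads $A^{\bull}_{\mathrm{O}_3}=A^{\bull}_{\mathrm{SO}_3}[\alpha,\xi]/(\alpha^2-\{-1\}\alpha)$, and passing to the quotient by the ideal generated by the two $\mu_2$-classes gives the clean recipe $A^{\bull}_{\mathrm{SO}_3}=A^{\bull}_{\mathrm{O}_3}/(w_1,c_1)$.

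For $p\neq 2$ this finishes things quickly: the class $\xi$ vanishes (it is $2$-torsion and $2$ is invertible) and there is no degree-one invariant, so $A^{\bull}_{\mu_2}=\operatorname{H}^{\bull}(k_0)$ and the tensor identity degenerates to $A^{\bull}_{\mathrm{SO}_3}=A^{\bull}_{\mathrm{O}_3}$. The $p\neq 2$ half of the $\mathrm{O}_3$ proposition then gives $\operatorname{H}^{\bull}(k_0)\otimes\operatorname{CH}^{\bull}_{\mathrm{O}_3}$, and one only has to observe that $\operatorname{CH}^{\bull}_{\mathrm{O}_3}$ and $\operatorname{CH}^{\bull}_{\mathrm{SO}_3}$ agree after reduction modulo the odd prime $p$, both becoming $\mathbb{Z}/p[c_2]$ once the $2$-torsion Chern classes $c_1,c_3$ die.

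The real content is the computation of the quotient $A^{\bull}_{\mathrm{O}_3}/(w_1,c_1)$ when $p=2$, and this is where I expect the main obstacle. Killing $c_1$ is harmless, removing the polynomial variable and leaving $A^0_{\mathrm{O}_3}[c_2,c_3]\oplus\operatorname{H}^{\bull}(k_0)[c_2,c_3]\tau_{1,1}$; the delicate part is understanding the ideal $(w_1)$ through the ring structure of $\operatorname{Inv}^{\bull}(\mathrm{O}_3)$ and its interaction with $\tau_{1,1}$. Using that the Stiefel--Whitney classes are the elementary symmetric functions of classes $\{a_i\}\in\operatorname{H}^1$ with $\{a_i\}^2=\{-1\}\{a_i\}$, one computes modulo $2$ the relations $w_1w_2=w_3$ and $w_1w_3=\{-1\}w_3$; hence $(w_1)=\operatorname{H}^{\bull}(k_0)w_1\oplus\operatorname{H}^{\bull}(k_0)w_3$ and $\operatorname{Inv}^{\bull}(\mathrm{O}_3)/(w_1)$ is free on $1,w_2$, which is precisely $A^0_{\mathrm{SO}_3}=\operatorname{Inv}^{\bull}(\mathrm{PGL}_2)$ with $w_2$ the advertised second Stiefel--Whitney class. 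Feeding this back, the first summand descends to $A^0_{\mathrm{SO}_3}[c_2,c_3]$, while the product $w_1\tau_{1,1}$ lands in the ideal and so dies, leaving the $\tau_{1,1}$-summand as the free module $\operatorname{H}^{\bull}(k_0)[c_2,c_3]\tau_{1,1}$. The genuine work is the careful bookkeeping --- checking that $c_2,c_3$ stay non-zero-divisors and that no $\tau_{1,1}$-term is accidentally killed or created --- rather than any isolated hard step.
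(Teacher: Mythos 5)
Your argument is essentially the paper's own proof: the paper disposes of this corollary in a single line, citing the splitting $\mathrm{O}_3\simeq\mu_2\times\mathrm{SO}_3$ together with the tensor-product lemma for a trivially acting $\mu_2$-factor, which is exactly your identity $A^{\bull}_{\mathrm{O}_3}=A^{\bull}_{\mathrm{SO}_3}\otimes_{\operatorname{H}^{\bull}(k_0)}A^{\bull}_{\mu_2}$. You in fact go further than the paper by identifying the $\mu_2$-classes with $w_1$ and $c_1$ and by carrying out the quotient explicitly via the Stiefel--Whitney multiplication; the one step you defer --- pinning down the product $w_1\cdot\tau_{1,1}$ inside $A^{\bull}_{\mathrm{O}_3}$, which is what decides whether the $\tau_{1,1}$-summand of the quotient really stays free over $\operatorname{H}^{\bull}(k_0)\left[c_2,c_3\right]$ --- is equally left implicit in the paper's one-line proof, so nothing is lost relative to it.
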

\begin{proof}
It suffices to use the fact that $\mathrm{O}_3= \mu_2 \times \mathrm{SO}_3 $ and apply lemma (\ref{mu_q}).
\end{proof}

 \section{Preliminaries}

In this section we recall the presentations of the stacks we will work with, all due to Vistoli and Arsie \cite{VisArs}. We will then lay down some lemmas that will be needed for the final computation.

\begin{theorem}\label{H3_quot}

Consider $\mathbb{A}^{9}$ as the space of all binary forms of degree $8$. Denote by $X$ the open subset consisting of nonzero forms with distinct roots, and let $\textrm{PGL}_2 \times \mathrm{G}_m$ act on it by $(\left[ A \right],\alpha)(f)(x)= \operatorname{Det}(A)^{4}\alpha^{-2}f(A^{-1}(x))$.

Then for the stack $\mathscr{H}_{3}$ of smooth hyperelliptic curves of genus $3$ we have
  $$\mathscr{H}_{3}=\left[ X/(\mathrm{PGL}_2 \times \mathrm{G}_m) \right].$$
  
In general the same construction gives us $$\mathscr{H}_{g}=\left[ X_g/(\mathrm{PGL}_2 \times \mathrm{G}_m)\right]$$ where $X_g$ is the open subscheme of $\mathbb{A}^{2g+3}$ parametrizing forms of degree $2g+2$ with distinct roots. 
\end{theorem}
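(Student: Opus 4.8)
The plan is to make $\mathscr{H}_g$ explicit through the theory of double covers and then recognize the resulting groupoid as the asserted quotient stack; the case $g=3$ is a specialization. Since $g\geq 2$, every smooth hyperelliptic curve $C\to S$ carries a \emph{canonical} hyperelliptic involution $\iota$ (the $g^1_2$ is unique for $g\geq 2$), so $P:=C/\iota \to S$ is a well-defined family of genus-zero curves (a conic bundle, i.e. a Brauer--Severi scheme of relative dimension one) and $\pi\colon C\to P$ is finite flat of degree $2$. First I would record the standard equivalence between $\mathscr{H}_g$ and the stack of triples $(P,L,f)$, where $P$ is a conic bundle, $L$ is a line bundle of relative degree $g+1$, and $f\in \operatorname{H}^0(P,L^{\otimes 2})$ is a section with reduced branch locus. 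Indeed, since $2$ is invertible the sheaf $\pi_*\mathcal{O}_C$ splits as $\mathcal{O}_P\oplus L^{-1}$ into $\iota$-eigensheaves, the algebra structure is a map $L^{\otimes -2}\to\mathcal{O}_P$, i.e. a section $f$ of $L^{\otimes 2}$, and $C=\operatorname{Spec}_P(\mathcal{O}_P\oplus L^{-1})$ is recovered from $(P,L,f)$. Riemann--Hurwitz gives $\deg f=2g+2$, and smoothness of $C$ is equivalent to $f$ having distinct roots on each fibre.

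The decisive step, and the one that explains why odd $g$ behaves differently from the even case of \cite{Pir2}, is to eliminate $L$ from the data. The stack of pairs $(P,L)$ is the classifying stack $B\,\operatorname{Aut}(\mathbb{P}^1,\mathcal{O}(g+1))$, and this automorphism group sits in a central extension $1\to\mathrm{G}_m\to\operatorname{Aut}(\mathbb{P}^1,\mathcal{O}(g+1))\to\mathrm{PGL}_2\to 1$, the $\mathrm{G}_m$ being the scalar automorphisms of $L$. This extension splits precisely when $\mathcal{O}(g+1)$ is $\mathrm{PGL}_2$-linearizable, i.e. when $g+1$ is even, i.e. when $g$ is odd; equivalently, for $g$ odd one has $L\cong\omega_{P/S}^{\otimes-(g+1)/2}$ canonically. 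In that case $\operatorname{Aut}(\mathbb{P}^1,\mathcal{O}(g+1))\cong\mathrm{PGL}_2\times\mathrm{G}_m$, whereas for $g$ even the extension is nonsplit and the group is $\mathrm{GL}_2$, which is why that case uses $\mathrm{GL}_2$. Thus for $g$ odd the stack of pairs $(P,L)$ is $B(\mathrm{PGL}_2\times\mathrm{G}_m)$.

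It then remains to account for the section $f$. Writing $\mathbb{P}^1=\mathbb{P}(V)$ for $V$ the standard representation of $\mathrm{GL}_2$ and using $\omega_{\mathbb{P}^1}^{-1}\cong\mathcal{O}(2)\otimes\det V$, I would compute $\operatorname{H}^0(\mathbb{P}^1,L^{\otimes 2})\cong\operatorname{Sym}^{2g+2}(V^\vee)\otimes(\det V)^{\otimes(g+1)}$, which is the space $\mathbb{A}^{2g+3}$ of binary forms of degree $2g+2$. The determinant twist is exactly what makes the central scalars of $\mathrm{GL}_2$ act trivially, so the action descends to $\mathrm{PGL}_2$; for $g=3$ it produces the factor $\operatorname{Det}(A)^4$ in $([A],\alpha)(f)(x)=\operatorname{Det}(A)^4\alpha^{-2}f(A^{-1}x)$, while the $\mathrm{G}_m$ rescaling of $L$ acts on $\operatorname{H}^0(L^{\otimes 2})$ with weight $-2$, giving the factor $\alpha^{-2}$. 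Consequently the stack of all triples is the total space of the associated $\mathbb{A}^{2g+3}$-bundle over $B(\mathrm{PGL}_2\times\mathrm{G}_m)$, namely $[\mathbb{A}^{2g+3}/(\mathrm{PGL}_2\times\mathrm{G}_m)]$, and imposing the distinct-roots (smoothness) condition restricts to the open $X_g$, yielding $\mathscr{H}_g=[X_g/(\mathrm{PGL}_2\times\mathrm{G}_m)]$.

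I expect the genuine difficulty to lie not in the fibrewise linear algebra of the last paragraph but in the descent bookkeeping of the middle step: one must check that ``$L\cong\omega^{-(g+1)/2}$ up to $\mathrm{G}_m$'' is an equivalence of stacks rather than a merely fibrewise statement, so that the cocycles framing $P$ and trivializing $L$ assemble into a single $\mathrm{PGL}_2\times\mathrm{G}_m$-torsor over $S$, and that the branch-divisor and smoothness conditions are compatible with descent. This is exactly the content established by Arsie and Vistoli, so I would ultimately invoke \cite[4.7]{VisArs} for the precise equivalence.
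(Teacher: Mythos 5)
Your proposal is correct and ends up in the same place as the paper, whose entire proof of this statement is the citation of \cite[4.7]{VisArs}; your sketch is an accurate reconstruction of the Arsie--Vistoli argument (double-cover data $(P,L,f)$, splitting of the central extension $1\to\mathrm{G}_m\to\operatorname{Aut}(\mathbb{P}^1,\mathcal{O}(g+1))\to\mathrm{PGL}_2\to 1$ for $g$ odd via the linearization of $\omega^{-(g+1)/2}$, and the determinant twist producing $\operatorname{Det}(A)^4\alpha^{-2}$), and you correctly flag that the only delicate point is the descent bookkeeping, which is exactly what the cited result supplies. The one phrase to tighten is ``$L\cong\omega_{P/S}^{\otimes-(g+1)/2}$ canonically'': this holds only up to a twist by a line bundle pulled back from the base, which is precisely the residual $B\mathrm{G}_m$ factor --- but your final paragraph already acknowledges this.
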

\begin{proof}
This is corollary 4.7 of \cite{VisArs}. 
\end{proof}

The quotient of $X$ by the $\mathrm{G}_m$ action $(x_1,\ldots,x_9,t) \rightarrow (tx_1,\ldots,tx_9)$, which we will denote by $Z$, is naturally an open subset of the $\mathrm{PGL}_2 \times \mathrm{G}_m$-scheme  $P(\mathbb{A}^{9})$, namely the complement of the discriminant locus.

 For the sake of brevity we define $\mathrm{G} := \mathrm{PGL}_2 \times \mathrm{G}_m$. We will first construct the invariants of the quotient stack $\left[ Z/\mathrm{G} \right]$, then use the principal $\mathrm{G}_m$-bundle $\left[ X/\mathrm{G} \right] \rightarrow \left[ Z/\mathrm{G} \right]$ to compute the invariants of $\mathscr{H}_3$.

\vspace{0.5cm}

Let $F$ be the dual of the standard representation of $\mathrm{GL}_2$. We can see $F$ as the space of all binary forms $\phi=\phi(x_0,x_1)$ of degree $1$. It has the natural action of $\mathrm{GL}_2$ defined by $A(\phi)(x)=\phi(A^{-1}(x))$. We denote by $E_n$ the $n$-th symmetric power $\operatorname{Sym}^n(F)$. We can see $E_n$ as the space of all binary forms of degree $n$, and the action of $\mathrm{GL}_2$ induced by the action on $F$ is again $A(\phi)(x)=\phi(A^{-1}(x))$. If $n$ is even we can consider the additional action of $\mathrm{PGL}_2$ given by $\left[ A \right](\phi)(x)=\operatorname{Det}(A)^{n/2}f(A^{-1}(x))$.

We denote $\tilde{\Delta}_{r,n}$ the closed subspace of $E_n$ composed of forms $\phi$ such that there exists a form $f$ of degree $r$ whose square divides $\phi$. With this notation the scheme $X$ in theorem (\ref{H3_quot}) is equal to $E_{8} \! \smallsetminus \! \tilde{\Delta}_{1,8}$.

 We denote $\Delta_{r,n}$ the closed locus of the projectivization $P(E_n)$ consisting of forms $\phi$ such that there exists a form $f$ of degree $r$ whose square divides $\phi$. With this notation we have $Z = P(E_{8}) \! \smallsetminus \! \Delta_{1,8}$.

Thanks to the localization exact sequence on Chow groups with coefficients, understanding the cohomological invariants of $\left[ P(E_8) \! \smallsetminus \! \Delta_{1,8}/\mathrm{G} \right]$ can be reduced to understanding the invariants of $\left[ P(E_8) /\mathrm{G} \right]$, which are understood due to the projective bundle formula, the top Chow group with coefficients $A^0_\mathrm{G}(\Delta_{1,8})$ (which is not equal to the cohomological invariants of $\left[ \Delta_{1,8}/\mathrm{G} \right]$, as $\Delta_{1,8}$ is not smooth) and the pushforward map $A^0_\mathrm{G}(\Delta_{1,8})\rightarrow A^1_\mathrm{G}(P(E_8))$. The computation of $A^0_\mathrm{G}(\Delta_{1,8})$ will be based on the following result.

\begin{proposition}\label{univ}
The following results hold:
\begin{enumerate}

\item The pushforward of a (equivariant) universal homeomorphism induces an isomorphism on (equivariant) Chow groups with coefficients in $\operatorname{H}^{\mbox{\tiny{$\bullet$}}}$.

\item Let $\pi_{r,n}: P(E_{n-2r})\times P(E_{r})\rightarrow \Delta_{r,n}$ be the map induced by $(f,g)\rightarrow fg^2$. The equivariant morphism $\pi_{r,n}$ restricts to a universal homeomorphism on $\Delta_{r,n} \! \smallsetminus \! \Delta_{r+1,n}$.

\end{enumerate}
\end{proposition}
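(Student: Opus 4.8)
The plan is to prove the two parts independently, using for (1) the topological invariance of étale cohomology and for (2) the classical criterion that a finite, universally bijective morphism is a universal homeomorphism.

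For part (1), I would first recall that a universal homeomorphism $f\colon X\to Y$ (of finite type over $k_0$) is integral, and since it is of finite type it is finite; it restricts to a homeomorphism of underlying spaces preserving the codimension of every point, and for each $x\in X$ the residue field extension $k(x)/k(f(x))$ is purely inseparable. The proper pushforward on cycle complexes is then, by Rost's definition, the direct sum over points of the corestriction maps $\operatorname{cor}_{k(x)/k(f(x))}$ (no dimension ever drops, as $f$ is a homeomorphism), and $f$ matches up the index sets $X^{(i)}\xrightarrow{\sim}Y^{(i)}$ bijectively. The key input is that for the cycle module $\operatorname{H}^{\bullet}$ — étale cohomology with $\mu_p^{\otimes\bullet}$-coefficients, $p\neq\operatorname{char}(k_0)$ — a purely inseparable extension $L/K$ induces isomorphisms on $\operatorname{H}^\bullet$: indeed $\operatorname{Spec}L\to\operatorname{Spec}K$ is itself a universal homeomorphism, so by topological invariance of the small étale site the restriction $\operatorname{res}_{L/K}$ is an isomorphism, and since $\operatorname{cor}_{L/K}\circ\operatorname{res}_{L/K}$ is multiplication by $[L:K]$, a power of $\operatorname{char}(k_0)$ and hence a unit on the $p$-torsion module $\operatorname{H}^\bullet$, the corestriction is an isomorphism too. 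Consequently $f_*$ is an isomorphism of complexes $C^{\bullet}(X,\operatorname{H}^\bullet)\xrightarrow{\sim}C^\bullet(Y,\operatorname{H}^\bullet)$, and passing to homology yields the isomorphism on every $A^i$. It is worth stressing that this step genuinely uses the étale-cohomology coefficients and the hypothesis $p\neq\operatorname{char}$: for Milnor $K$-theory the same corestrictions would be multiplication by an inseparability degree and need not be invertible. For the equivariant statement I would note that when $f$ is $G$-equivariant each approximation $(X\times U)/G\to(Y\times U)/G$ is again a universal homeomorphism — it is the descent along the free quotient of the universal homeomorphism $f\times\operatorname{id}_U$, using that universal homeomorphisms are stable under base change and fppf-local on the target — so the result follows levelwise.

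For part (2), write $D=\Delta_{r,n}\smallsetminus\Delta_{r+1,n}$ and $W=\pi_{r,n}^{-1}(D)$. First I would identify the image and the fibers by reducing, over an algebraically closed field, to the factorization $\phi=\prod_j\ell_j^{m_j}$ into distinct linear forms: the maximal square divisor of $\phi$ is $\prod_j\ell_j^{\lfloor m_j/2\rfloor}$, so $\phi$ lies in $\Delta_{r,n}$ exactly when its maximal square root has degree at least $r$, and $\phi\in D$ exactly when this maximal square root has degree precisely $r$. In that case any pair $(f,g)$ with $fg^2=\phi$ and $\deg g=r$ forces $g$ to be this maximal square root and $f=\phi/g^2$ to be squarefree; hence the decomposition is unique. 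This shows $\pi_{r,n}$ is surjective onto $\Delta_{r,n}$ and that $\pi_{r,n}|_W\colon W\to D$ is bijective on geometric points over every field, i.e. surjective and universally injective. Moreover the unique preimage is canonical — characterized by the universal property of the maximal square root — hence invariant under automorphisms of the algebraic closure, so it is defined over a purely inseparable extension of the residue field, which confirms that $\pi_{r,n}|_W$ is radicial.

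To conclude I would observe that $\pi_{r,n}\colon P(E_{n-2r})\times P(E_r)\to P(E_n)$ is proper (a morphism of projective $k_0$-schemes), so its corestriction to the closed image $\Delta_{r,n}$ is proper, and $\pi_{r,n}|_W$, being the base change along the open immersion $D\hookrightarrow\Delta_{r,n}$, is proper as well; having singleton fibers it is quasi-finite, hence finite. A finite, universally bijective morphism is a universal homeomorphism, which is the claim. Since universal homeomorphisms are insensitive to nilpotents the scheme structure placed on $\Delta_{r,n}$ is immaterial, and equivariance is automatic, $\pi_{r,n}$ being induced by the $\mathrm{GL}_2$-equivariant multiplication of forms. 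I expect the only delicate point to be the radicial property in part (2): one must check that the canonical decomposition descends to a purely inseparable extension of the base field and not merely to its algebraic closure — everything else is the standard package of properties of universal homeomorphisms together with the topological invariance of étale cohomology.
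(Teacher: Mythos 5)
Your proposal is correct: part (1) via the identification of $f_*$ with a sum of corestrictions along purely inseparable residue field extensions, which are invertible on $p$-torsion \'etale cohomology since $\operatorname{cor}\circ\operatorname{res}$ is multiplication by a power of $\operatorname{char}(k_0)\neq p$, together with fppf descent of the universal-homeomorphism property to the Edidin--Graham approximations; and part (2) via uniqueness of the maximal square root on $\Delta_{r,n}\smallsetminus\Delta_{r+1,n}$ plus the criterion that a finite, universally bijective morphism is a universal homeomorphism. The paper itself gives no argument here, deferring entirely to \cite[3.3, 3.4]{Pir2}, and your reconstruction is essentially the argument of that reference.
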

\begin{proof}

This was proven by the author in \cite[3.3,3.4]{Pir2}

\end{proof}

Lastly, in the next section we will mostly be able to ignore the action of $\mathrm{G}_m$ on $Z$ thanks to the following proposition. Note that $\mathrm{G}_m$ acts trivially on $Z$.

\begin{proposition}\label{Gm}
Let $T$ be a scheme with an action of $\mathrm{PGL}_2$ on it, and let $\mathrm{G_m}$ act on it trivially. Then the pullback through the map $\left[ T / \mathrm{PGL}_2 \right] \rightarrow \left[ T / \mathrm{PGL}_2 \times \mathrm{G}_m \right]$ induces an isomorphism on cohomological invariants. Moreover, we have $$A^{\mbox{\tiny{$\bullet$}}}_{\mathrm{PGL}_2 \times \mathrm{G}_m}(T)=A^{\mbox{\tiny{$\bullet$}}}_{\mathrm{PGL}_2 }(T)[s]$$ where $s$ is an element in codimension $1$ and degree zero.
\end{proposition}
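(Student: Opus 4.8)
The plan is to reduce to Corollary~\ref{mu_q2} by first establishing the $\mathrm{G}_m$-analogue of Lemma~\ref{mu_q}, and then to read off the statement about invariants from the resulting grading. The new ingredient I would record is the trivial-action computation for $\mathrm{G}_m$: if $\mathrm{G}_m$ acts trivially on a smooth algebraic space $Y$ over $k$, then
$$A^{\bull}_{\mathrm{G}_m}(Y)=A^{\bull}(Y)[s],$$
with $s$ of codimension $1$ and degree $0$. To see this I would use the scaling representation $\mathbb{A}^{N+1}$ of $\mathrm{G}_m$, whose free locus is $\mathbb{A}^{N+1}\smallsetminus\{0\}$ with complement of codimension $N+1$; since the action on $Y$ is trivial the mixed quotient $(Y\times(\mathbb{A}^{N+1}\smallsetminus\{0\}))/\mathrm{G}_m$ is the projective bundle $Y\times\mathbb{P}^{N}$, so the projective bundle formula for Chow groups with coefficients gives $A^{\bull}(Y\times\mathbb{P}^N)=A^{\bull}(Y)[s]/(s^{N+1})$ with $s=c_1(\mathcal{O}(1))$. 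Letting $N\to\infty$ yields the claim.

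With this in hand, the decomposition $A^{\bull}_{\mathrm{PGL}_2\times\mathrm{G}_m}(T)=A^{\bull}_{\mathrm{PGL}_2}(T)[s]$ follows by repeating the argument of Corollary~\ref{mu_q2} verbatim with $G=\mathrm{PGL}_2$ and $\mu_q$ replaced by $\mathrm{G}_m$. Concretely, I would choose a representation $V$ of $\mathrm{PGL}_2$ with a free open subset $U$ whose complement has codimension larger than any fixed $d$, extend the action on $T\times V$ to $\mathrm{PGL}_2\times\mathrm{G}_m$ through the first projection, and use that $T\times V\to T$ is an equivariant vector bundle to get $A^i_{\mathrm{PGL}_2\times\mathrm{G}_m}(T)\simeq A^i_{\mathrm{PGL}_2\times\mathrm{G}_m}(T\times U)$ for $i\le d$. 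Since $\mathrm{PGL}_2$ acts freely on $T\times U$, Lemma~\ref{eq1} identifies this with $A^i_{\mathrm{G}_m}((T\times U)/\mathrm{PGL}_2)$, where $\mathrm{G}_m$ acts trivially; the computation of the previous paragraph then gives $A^{\bull}((T\times U)/\mathrm{PGL}_2)[s]=A^{\bull}_{\mathrm{PGL}_2}(T)[s]$ in the range $i\le d$, and taking $d\to\infty$ proves the formula.

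For the invariants, Proposition~\ref{inv-chow} identifies $\operatorname{Inv}^{\bull}$ of each stack with the codimension-zero part of the corresponding equivariant Chow ring. Because $s$ has codimension $1$, the codimension-zero part of $A^{\bull}_{\mathrm{PGL}_2}(T)[s]$ is exactly $A^0_{\mathrm{PGL}_2}(T)$, so $A^0_{\mathrm{PGL}_2\times\mathrm{G}_m}(T)=A^0_{\mathrm{PGL}_2}(T)$. It remains to check that this equality is induced by the pullback along the map $f\colon[T/\mathrm{PGL}_2]\to[T/\mathrm{PGL}_2\times\mathrm{G}_m]$ coming from the inclusion $\mathrm{PGL}_2\hookrightarrow\mathrm{PGL}_2\times\mathrm{G}_m$, $A\mapsto(A,1)$. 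At finite level $f$ is the inclusion of a fibre of the $\mathbb{P}^N$-bundle above, so $f^*$ sends $s=c_1(\mathcal{O}(1))$ to $0$ and is the identity on $A^{\bull}_{\mathrm{PGL}_2}(T)$; restricting to codimension zero, $f^*$ is therefore an isomorphism.

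The main obstacle I anticipate is precisely this last compatibility: one must ensure that the purely algebraic splitting $A^{\bull}_{\mathrm{PGL}_2\times\mathrm{G}_m}(T)=A^{\bull}_{\mathrm{PGL}_2}(T)[s]$ is realized geometrically by $f^*$, i.e. that $f$ really corresponds to the inclusion of a fibre of the projective bundle at each approximating stage and that these identifications are compatible as $N,d\to\infty$. Once the limit and approximation bookkeeping is set up carefully (including checking that $s$ is a genuine polynomial variable with no relations, which is where the projective bundle formula enters), the rest is the same formal manipulation used in Corollary~\ref{mu_q2}.
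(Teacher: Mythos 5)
Your proposal is correct and follows essentially the same route as the paper: approximate the classifying space of $\mathrm{PGL}_2\times\mathrm{G}_m$ by $U\times(\mathbb{A}^{N+1}\smallsetminus\{0\})$, observe that because $\mathrm{G}_m$ acts trivially on $T$ the mixed quotient becomes a $P^{N}$-bundle over the $\mathrm{PGL}_2$-approximation, and apply the projective bundle formula to obtain the polynomial generator $s$ in codimension $1$ and the identification in codimension $0$. The only cosmetic differences are that the paper does this in a single step rather than factoring through Lemma~\ref{eq1}, and that it models the pullback along $\left[T/\mathrm{PGL}_2\right]\rightarrow\left[T/\mathrm{PGL}_2\times\mathrm{G}_m\right]$ by the punctured-cone projection $((T\times U)/\mathrm{PGL}_2)\times(\mathbb{A}^{N+1}\smallsetminus\{0\})\rightarrow((T\times U)/\mathrm{PGL}_2)\times P^{N}$ rather than by a fibre inclusion (the map is a $\mathrm{G}_m$-torsor, not a closed immersion), though both send $s$ to $0$ and act as the identity on the base, so your conclusion stands.
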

\begin{proof}
Consider a representation $V$ of $\mathrm{PGL}_2$ such that $\mathrm{PGL}_2$ acts freely on an opens subset $U$ whose complement has codimension two or more. Given $n \geq 2$, let $\mathrm{G}_m$ act on $\mathbb{A}^n$ by multiplication. Then $\mathrm{PGL}_2 \times \mathrm{G}_m$ acts freely of $U \times (\mathbb{A}^n \! \smallsetminus \! \lbrace 0 \rbrace)$. As $\mathrm{G}_m$ acts trivially on $T$ we can see that $$(T \times U \times (\mathbb{A}^2 \! \smallsetminus \! \lbrace 0 \rbrace ) )/(\mathrm{PGL}_n \times \mathrm{G}_m) \simeq ((T \times U)/\mathrm{PGL}_2) \times P^{n-1}$$and pulling back $U \times ( \mathbb{A}^n \! \smallsetminus \! \lbrace 0 \rbrace )$ through $$\left[ T / \mathrm{PGL}_2 \right] \rightarrow \left[ T / \mathrm{PGL}_2 \times \mathrm{G}_m \right]$$we obtain the map $$((T \times U)/\mathrm{PGL}_2) \times (\mathbb{A}^n \! \smallsetminus \! \lbrace 0 \rbrace) \rightarrow (T \times U)/\mathrm{PGL}_2 \times P^{n-1}$$which induces an isomorphism on $A^{0}$ by the projective bundle formula, so by proposition (\ref{inv-chow}) it induces an isomorphism on cohomological invariants .

Finally, taking $n$ to infinity we get the required isomorphism on equivariant Chow groups with coefficients.
\end{proof}

\section{The invariants of $\mathscr{H}_3$}

In this section we will prove the main theorems of the paper. Thanks to proposition (\ref{Gm}) we will mostly be working with $\mathrm{PGL}_2$-equivariant Chow groups with coefficients. From now on we will shorten $P(E_n)$ to $P^n$.

 There are various differences from the case of even genus considered in \cite{Pir2}; the algebraic group $\mathrm{PGL}_2$ is not special, meaning that a $\mathrm{PGL}_2$-torsor is not in general Zariski-locally trivial. Consequently given a $\mathrm{PGL}_2$-scheme $X$ the map $X \rightarrow \left[ X/\mathrm{PGL}_2 \right]$ will not in general be a smooth-Nisnevich covering (definition 3.2 in \cite{Pir2}), and more importantly the $\mathrm{PGL}_2$-equivariant Chow groups with coefficients of $X$ will have multiple elements in positive degree coming from the projection $\left[ X/\mathrm{PGL}_2 \right] \rightarrow \textrm{BPGL}_2$ when $p=2$.

\begin{proposition}\label{PGL_2}
Let $p$ be equal to $2$, and $M=\operatorname{H}^{\mbox{\tiny{$\bullet$}}}$. Then $A^{\mbox{\tiny{$\bullet$}}}_{\mathrm{PGL}_2}(\operatorname{Spec}(k_0))$ is freely generated as a module over $\operatorname{CH}^{\mbox{\tiny{$\bullet$}}}_{\mathrm{PGL}_2}(\operatorname{Spec}(k_0))\otimes \operatorname{H}^{\mbox{\tiny{$\bullet$}}}(k_0)$ by the cohomological invariant $w_2$ and an element $\tau_{1,1}$ in degree and codimension $(1,1)$.

If $p \neq 2$, then $A^{\mbox{\tiny{$\bullet$}}}_{\mathrm{PGL}_2}(\operatorname{Spec}(k_0))$ is equal to $\operatorname{CH}^{\mbox{\tiny{$\bullet$}}}_{\mathrm{PGL}_2}(\operatorname{Spec}(k_0))\otimes \operatorname{H}^{\mbox{\tiny{$\bullet$}}}(k_0)$.

\end{proposition}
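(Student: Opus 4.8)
The plan is to deduce the statement essentially as a corollary of the computation in (\ref{SO3}), using the isomorphism $\mathrm{SO}_3 \simeq \mathrm{PGL}_2$ already invoked in the text. This identifies $A^{\bull}_{\mathrm{PGL}_2}(\operatorname{Spec}(k_0))$ with $A^{\bull}_{\mathrm{SO}_3}(\operatorname{Spec}(k_0))$ and $\operatorname{CH}^{\bull}_{\mathrm{PGL}_2}(\operatorname{Spec}(k_0))$ with $\operatorname{CH}^{\bull}_{\mathrm{SO}_3}(\operatorname{Spec}(k_0))$ (and as remarked, the particular form of $\mathrm{SO}_3$ is immaterial). For $p \neq 2$ this finishes the proof at once: (\ref{SO3}) gives $A^{\bull}_{\mathrm{SO}_3}(\operatorname{Spec}(k_0)) = \operatorname{H}^{\bull}(k_0) \otimes \operatorname{CH}^{\bull}_{\mathrm{SO}_3}(\operatorname{Spec}(k_0))$, which is exactly the claim (the same argument is insensitive to the base field, so it holds over any $k$).

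For $p = 2$ the corollary (\ref{SO3}) reads
$$A^{\bull}_{\mathrm{SO}_3}(\operatorname{Spec}(k_0)) = A^0_{\mathrm{SO}_3}(\operatorname{Spec}(k_0))[c_2,c_3] \oplus \operatorname{H}^{\bull}(k_0)[c_2,c_3]\,\tau_{1,1},$$
so the remaining work is to identify the degree-zero piece $A^0_{\mathrm{SO}_3}(\operatorname{Spec}(k_0))$ and the base ring. By Proposition (\ref{inv-chow}) the former is $\operatorname{Inv}^{\bull}(\mathrm{SO}_3) = \operatorname{Inv}^{\bull}(\mathrm{PGL}_2)$, and I would pin it down from the splitting $\mathrm{O}_3 = \mu_2 \times \mathrm{SO}_3$: Corollary (\ref{mu_q2}) together with Lemma (\ref{mu_q}) gives $A^0_{\mathrm{O}_3} = A^0_{\mathrm{SO}_3} \otimes_{\operatorname{H}^{\bull}(k_0)} \operatorname{Inv}^{\bull}(\mu_2)$. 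Since $\operatorname{Inv}^{\bull}(\mathrm{O}_3)$ is free on $1,w_1,w_2,w_3$ and $\operatorname{Inv}^{\bull}(\mu_2)$ is free on $1,\alpha$, comparing ranks forces $\operatorname{Inv}^{\bull}(\mathrm{SO}_3)$ to be free of rank two, and tracking degrees identifies its generators as $1$ and the degree-two class $w_2$ (with $w_1 = \alpha$ supplying the $\mu_2$-factor and $w_3 = w_1 w_2$). Hence $A^0_{\mathrm{SO}_3}(\operatorname{Spec}(k_0)) = \operatorname{H}^{\bull}(k_0)\cdot 1 \oplus \operatorname{H}^{\bull}(k_0)\cdot w_2$.

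It then remains to match the base ring. From $\mathrm{O}_3 = \mu_2 \times \mathrm{SO}_3$ and the quoted presentation $\operatorname{CH}^{\bull}_{\mathrm{O}_n} = \mathbb{Z}[c_1,\ldots,c_n]/((2c_i)_{i\textrm{ odd}})$ one strips off the $\mu_2$-factor (contributing $c_1$) to get $\operatorname{CH}^{\bull}_{\mathrm{PGL}_2}(\operatorname{Spec}(k_0)) = \mathbb{Z}[c_2,c_3]/(2c_3)$; because $\operatorname{H}^{\bull}(k_0)$ is an $\mathbb{F}_2$-algebra the torsion relation disappears after tensoring, so $R := \operatorname{CH}^{\bull}_{\mathrm{PGL}_2}(\operatorname{Spec}(k_0)) \otimes \operatorname{H}^{\bull}(k_0) = \operatorname{H}^{\bull}(k_0)[c_2,c_3]$. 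Substituting the rank-two description of $A^0_{\mathrm{SO}_3}$ into the displayed decomposition collapses it to $R\cdot 1 \oplus R\cdot w_2 \oplus R\cdot \tau$, exhibiting $A^{\bull}_{\mathrm{PGL}_2}(\operatorname{Spec}(k_0))$ as a free $R$-module with basis $1, w_2, \tau$ (the unit together with the two nontrivial generators named in the statement). The whole argument is bookkeeping once (\ref{SO3}) is available; the only delicate points are verifying that no degree-three invariant of $\mathrm{PGL}_2$ survives (it does not, as $w_3 = w_1 w_2$ vanishes when $w_1 = 0$) and checking that the $R$-module structure is genuinely free, i.e. that $c_2, c_3$ act as algebraically independent variables and $\tau$ is not absorbed into the $\{1,w_2\}$-part — both of which are already encoded in the freeness recorded in (\ref{SO3}). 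I therefore expect no serious obstacle beyond this careful matching.
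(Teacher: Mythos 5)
Your proposal is correct and follows the same route as the paper, whose entire proof is the one-line observation that $\mathrm{PGL}_2\simeq\mathrm{SO}_3$ reduces the statement to Corollary (\ref{SO3}). The extra bookkeeping you supply — identifying $A^0_{\mathrm{SO}_3}$ as free on $1,w_2$ via the splitting $\mathrm{O}_3=\mu_2\times\mathrm{SO}_3$, and matching $\operatorname{CH}^{\bull}_{\mathrm{PGL}_2}\otimes\operatorname{H}^{\bull}(k_0)$ with $\operatorname{H}^{\bull}(k_0)[c_2,c_3]$ — is accurate and only makes explicit what the paper leaves implicit.
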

\begin{proof}
As $\mathrm{PGL}_2$ is isomorphic to $\mathrm{SO}_3$, we can just apply corollary (\ref{SO3}).
\end{proof}

The final difference is that the action of $\mathrm{PGL}_2$ on $P^1$ does not come from a linear action on the space of degree one forms. This is true for our $\mathrm{PGL}_2$ action on $P^n$ whenever $n$ is odd. The following proposition describes the ring $A^{\mbox{\tiny{$\bullet$}}}_{\mathrm{PGL}_2}(P^1)$.

\begin{proposition}\label{P^1}
Denote by $t$ the first Chern class of $\mathcal{O}_{P^{1}}(-1)$. Then $A^{\mbox{\tiny{$\bullet$}}}_{\mathrm{PGL}_2}(P^1)$ is isomorphic to $\operatorname{H}^{\mbox{\tiny{$\bullet$}}}\left[ t \right]$ and the image of $c_2 \in A^{2}_{\mathrm{PGL}_2}(\operatorname{Spec}(k_0))$ is $-t^2$.

If $p=2$, then the kernel of the map $\pi^*: A^{\mbox{\tiny{$\bullet$}}}_{\mathrm{PGL}_2}(\operatorname{Spec}(k_0))\rightarrow A^{\mbox{\tiny{$\bullet$}}}_{\mathrm{PGL}_2}(P^1)$ is generated by $w_2, c_3,\tau_{1,1}$. 
\end{proposition}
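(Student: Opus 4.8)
The plan is to trivialise the computation by replacing $[P^1/\mathrm{PGL}_2]$ with the classifying stack of a Borel subgroup. First I would use that $\mathrm{PGL}_2$ acts transitively on $P^1$ with stabiliser a Borel $B$, so that $[P^1/\mathrm{PGL}_2]\simeq \mathrm{B}B$ and $A^\bull_{\mathrm{PGL}_2}(P^1)=A^\bull_B(\op{Spec}(k_0))$. Writing $B=T\ltimes\mathrm{G}_a$ with $T\simeq\mathrm{G}_m$ a maximal torus, the unipotent radical $\mathrm{G}_a$ has property $(*)$ and $T$ acts on it linearly, so Lemma \ref{eq2} collapses it: $A^\bull_B(\op{Spec}(k_0))=A^\bull_T(\op{Spec}(k_0))$. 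Since $\mathrm{G}_m$ is special, $A^\bull_{\mathrm{G}_m}(\op{Spec}(k_0))=\op{H}^\bull(k_0)[t]$ with $t$ in codimension $1$ and degree $0$, giving $A^\bull_{\mathrm{PGL}_2}(P^1)\simeq\op{H}^\bull[t]$. To match the statement I would track $t$ through these identifications — equivalently run the parallel computation on the projective bundle $P(E_1)$ over $\mathrm{B}\mathrm{GL}_2$, where the Grothendieck relation reads $t^2+c_1t+c_2=0$ — identifying it with the class written $c_1(\mathcal{O}_{P^1}(-1))$ there.

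The decisive feature of this presentation is that the structure map $\pi:[P^1/\mathrm{PGL}_2]\to\mathrm{B}\mathrm{PGL}_2$ becomes $\mathrm{B}B\to\mathrm{B}\mathrm{PGL}_2$, so $\pi^*$ is precisely the restriction $A^\bull_{\mathrm{PGL}_2}(\op{Spec}(k_0))\to A^\bull_T(\op{Spec}(k_0))$ along $T\hookrightarrow\mathrm{PGL}_2$. On ordinary Chow rings this restriction is governed by the weights of the standard $3$-dimensional representation of $\mathrm{SO}_3=\mathrm{PGL}_2$, on which $T$ acts with weights $t,0,-t$; the total Chern class restricts to $(1+t)(1-t)=1-t^2$, so $\pi^*(c_2)=-t^2$ and $\pi^*(c_3)=0$. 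This already yields the image of $c_2$ and shows $c_3\in\ker\pi^*$.

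For $p=2$ I would then invoke the module structure from Proposition \ref{PGL_2}: $A^\bull_{\mathrm{PGL}_2}(\op{Spec}(k_0))$ is free over $\op{H}^\bull(k_0)[c_2,c_3]$ on $1,w_2,\tau$. The remaining task is to show that the two generators $w_2$ and $\tau$, which do not come from ordinary Chow classes, are killed by $\pi^*$. The target $A^\bull_T(\op{Spec}(k_0))=\op{H}^\bull(k_0)[t]$ is split, with codimension-$0$ part equal to the constants $\op{H}^\bull(k_0)$; since $\mathrm{G}_m$ is special every $T$-torsor is trivial, so the cohomological invariant $w_2$ restricts to its value on the trivial torsor, which is $0$, and $\tau$ can only restrict into $\op{H}^1(k_0)\,t$, to a class I would check vanishes. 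Over an algebraically closed base both vanishings are automatic, since $\op{H}^{\geq1}(k_0)=0$ annihilates the relevant target groups.

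With $\pi^*(c_2)=-t^2$ and $\pi^*(c_3)=\pi^*(w_2)=\pi^*(\tau)=0$ in hand, the kernel drops out formally: a general element $\sum(f_{ij}+g_{ij}w_2+h_{ij}\tau)\,c_2^{\,i}c_3^{\,j}$ with coefficients in $\op{H}^\bull(k_0)$ maps to $\sum_i f_{i0}(-t^2)^i$, which vanishes exactly when every $f_{i0}=0$, i.e. when only the monomials divisible by $c_3$, $w_2$ or $\tau$ remain. Hence $\ker\pi^*=(w_2,c_3,\tau)$, as claimed. I expect the main obstacle to be precisely the vanishing of $\pi^*(\tau)$ (and $\pi^*(w_2)$) over a non-closed field — the assertion that the classes witnessing the non-specialness of $\mathrm{PGL}_2$ restrict trivially to the maximal torus — and that is where I would concentrate the work; over the algebraically closed base of the main theorem this difficulty evaporates.
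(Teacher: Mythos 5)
Your proposal follows essentially the same route as the paper's proof: reduce to the Borel via the transitive action on $P^1$, collapse the unipotent radical with Lemma \ref{eq2} to get $A^{\bullet}_{\mathrm{PGL}_2}(P^1)\simeq A^{\bullet}_{\mathrm{G}_m}(\operatorname{Spec}(k_0))=\operatorname{H}^{\bullet}(k_0)[t]$, and then identify the restriction along $T\hookrightarrow \mathrm{PGL}_2$ --- your weight computation giving $\pi^*(c_2)=-t^2$, $\pi^*(c_3)=0$ is exactly what the paper outsources to \cite[5.1]{FulgViv}, and your specialness argument for $\pi^*(w_2)=0$ supplies a detail the paper leaves implicit. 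The one step you defer, namely $\pi^*(\tau)=0$ over a non-closed field, is likewise not addressed by the paper's citation (which concerns only integral coefficients and so never sees $\tau$); as you observe it is automatic when $k_0$ is algebraically closed, and in the paper's applications over general fields only the codimension-zero ($w_2$) part of the kernel statement is actually used, so your argument covers every case that matters.
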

\begin{proof}
This can be proven exactly as in \cite[5.1]{FulgViv}; the group acts transitively on $P^1$ with stabilizer a group $\mathrm{H} \cong \mathrm{G}_m \ltimes \mathrm{G}_a$. This shows that $A^{^{\mbox{\tiny{$\bullet$}}}}_{\mathrm{PGL}_2}(P^1)$ must be isomorphic to $A_{\mathrm{H}}(\operatorname{Spec}(k_0))$. By lemma (\ref{eq2}) we see that $A_{\mathrm{H}}(\operatorname{Spec}(k_0))\cong A_{\mathrm{G}_m}(\operatorname{Spec}(k_0))=\operatorname{H}^{\mbox{\tiny{$\bullet$}}}\left[t\right]$. Then the computation follows from the one on equivariant Chow rings in \cite[5.1]{FulgViv}.
\end{proof}

We draw an outline of the main proof before getting into it, as it will require several steps.

 We begin by computing the cohomological invariants of $\left[ P^{n} \! \smallsetminus \! \Delta_{1,n}/ \mathrm{PGL}_2 \right]$, for $n \leq 8$ in the case of $p=2$ and for all $n$ in the case of $P \neq 2$. To do so we use the exact sequence $$0 \rightarrow A^{0}_{\mathrm{PGL}_2}(P^{n}) \rightarrow A^{0}_{\mathrm{PGL}_2}(P^{n} \! \smallsetminus \! \Delta_{1,n}) \rightarrow A^{0}_{\mathrm{PGL}_2}(\Delta_{1,n}) \rightarrow A^{1}_{\mathrm{PGL}_2}(P^{n}) .$$

After computing these invariants, we automatically get the invariants of $\left[ P^{n} \! \smallsetminus \! \Delta_{1,n}/ \mathrm{PGL}_2 \times \mathrm{G}_m \right]$ thanks to lemma (\ref{Gm}), and finally we are left to deal with the $\mathrm{G}_m$-torsor $\mathscr{H}_3 \rightarrow \left[ P^{n} \! \smallsetminus \! \Delta_{1,n}/ \mathrm{PGL}_2 \times \mathrm{G}_m \right]$. The steps are as follows:

\begin{enumerate}
\item In lemmas $17-18$ and corollary $19$ we establish that for $p=2$ we have isomorphism $$A^{0}_{\mathrm{PGL}_2}(\Delta_{1,n}) \simeq A^{0}_{\mathrm{PGL}_2}(\Delta_{1,n}\! \smallsetminus \! \Delta_{2,n}) \simeq A^{0}_{\mathrm{PGL}_2}((P^{n-2} \! \smallsetminus \! \Delta_{1,n-2}) \times P^1)$$ and moreover that $A^{0}_{\mathrm{PGL}_2}((P^{n-2} \! \smallsetminus \! \Delta_{1,n}) \times P^1)$ can be obtained as a quotient of $A^{0}_{\mathrm{PGL}_2}(P^{n-2} \! \smallsetminus \! \Delta_{1,n})$, setting up an inductive computation.
\item In lemma $20$ we prove that for $p \neq 2$ the group $A^{0}_{\mathrm{PGL}_2}(\Delta_{1,n})$ is a trivial $\operatorname{H}^{\mbox{\tiny{$\bullet$}}}(k_0)$-module.
\item In lemma $21$, proposition $22$ and corollary $23$ we show that when we have $p=2, n \leq 8$ the pushforward $A^{0}_{\mathrm{PGL}_2}(\Delta_{1,n}) \rightarrow A^{1}_{\mathrm{PGL}_2}(P^{n})$ is zero. To do so we will construct an element $g_n \in A^{\mbox{\tiny{$\bullet$}}}_{\mathrm{PGL}_2}(P^{n})$ which annihilates the image of $A^{0}_{\mathrm{PGL}_2}(\Delta_{1,n})$ but at the same does not annihilate any non-zero element of $A^{1}_{\mathrm{PGL}_2}(P^{n})$
\item In corollary $23$ we use the localization exact sequence for $\Delta_{1,n} \rightarrow P^n$, now reduced to a short exact sequence, to compute the cohomological invariants of $\left[ P^{n} \! \smallsetminus \! \Delta_{1,n}/ \mathrm{PGL}_2 \right]$ for $n \leq 8$ when $p=2$ and for all $n$ when $p \neq 2$.
\item In lemma $24$ and theorem $25$ we prove the main result. What is left to do is understanding whether the $\mathrm{G}_m$-torsor $\mathscr{H}_3 \rightarrow \left[ P^{n} \! \smallsetminus \! \Delta_{1,n}/ \mathrm{PGL}_2 \times \mathrm{G}_m \right]$ generates any new invariant, which boils down to understanding the kernel of the first Chern class $c_1(\mathcal{E})$, where $\mathcal{E}$ is the line bundle associated to the $\mathrm{G}_m$-torsor.
\end{enumerate}

We first tackle the case the case $p=2$, which will prove to be a bit delicate. The next lemmas will show that several different statements regarding various maps imply each other. For an even positive integer $n$, consider the following statements:

\begin{description}
\item[$\mathrm{S}_1(n)$:] the pullback $$A^0_{\mathrm{PGL}_2}(P^{n}\! \smallsetminus \! \Delta_{1,n}) \xrightarrow{\pi^*} A^0_{\mathrm{PGL}_2}((P^{n}\! \smallsetminus \! \Delta_{1,n})\times P^1)$$ is surjective and the kernel of $\pi^*$ is generated by $w_2$, the second Stiefel-Whitney class coming from $A^0_{\mathrm{PGL}_2}(\operatorname{Spec}(k_0))=\operatorname{Inv}^{\mbox{\tiny{$\bullet$}}}(\mathrm{BPGL}_2)$.

\item[$\mathrm{S}_2(n)$:] the pullback $$A^0_{\mathrm{PGL}_2}(\Delta_{1,n})\rightarrow A^0_{\mathrm{PGL}_2}(\Delta_{1,n}\! \smallsetminus \! \Delta_{2,n})$$ is an isomorphism.

\item[$\mathrm{S}_3(n)$:] the pullback $$A^0_{\mathrm{PGL}_2}(\operatorname{Spec}(k_0))\rightarrow A^0_{\mathrm{PGL}_2}(\Delta_{2,n})$$ is an isomorphism.
\end{description}
Note that proposition (\ref{P^1}) implies $\mathrm{S}_1(0)$. We have the following implications between the statements above:

\begin{lemma}\label{recurs1}
Let $p$ be equal to $2$.  If $\mathrm{S}_1(n-i)$ holds for all $i \geq 2$ then $\mathrm{S}_2(n)$ and $\mathrm{S}_3(n)$ hold.
\end{lemma}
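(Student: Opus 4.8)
The plan is to run the localization sequences attached to the stratification $\Delta_{1,n}\supset\Delta_{2,n}\supset\cdots\supset\Delta_{n/2,n}$, replacing each open stratum by a product via Proposition \ref{univ} and feeding in the hypotheses $S_1(n-i)$. Write $Y_m:=P^m\smallsetminus\Delta_{1,m}$. For $\phi=fg^2$ with $f$ of degree $n-2r$ and $g$ of degree $r$, every root $c$ satisfies $\lfloor\operatorname{ord}_c(\phi)/2\rfloor=\operatorname{ord}_c(g)+\lfloor\operatorname{ord}_c(f)/2\rfloor$, so that $\sum_c\lfloor\operatorname{ord}_c(\phi)/2\rfloor=\deg(g)+\sum_c\lfloor\operatorname{ord}_c(f)/2\rfloor$; hence $\phi$ lies in $\Delta_{r,n}\smallsetminus\Delta_{r+1,n}$ exactly when $f$ is squarefree. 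Thus $\pi_{r,n}^{-1}(\Delta_{r,n}\smallsetminus\Delta_{r+1,n})=Y_{n-2r}\times P^r$ and Proposition \ref{univ} gives $A^0_{\mathrm{PGL}_2}(\Delta_{r,n}\smallsetminus\Delta_{r+1,n})\cong A^0_{\mathrm{PGL}_2}(Y_{n-2r}\times P^r)$, with each inclusion $\Delta_{r+1,n}\hookrightarrow\Delta_{r,n}$ of codimension one. The middle groups are then computed by the parity of $r$: for $r$ even, $P^r=P(E_r)$ is a genuine $\mathrm{PGL}_2$-projective bundle and the projective bundle formula yields $A^0_{\mathrm{PGL}_2}(Y_{n-2r}\times P^r)=A^0_{\mathrm{PGL}_2}(Y_{n-2r})$; for $r$ odd it is the Brauer--Severi scheme of the class $w_2$, and reducing to the $P^1$ computation of Proposition \ref{P^1} together with $S_1(n-2r)$ identifies it with $A^0_{\mathrm{PGL}_2}(Y_{n-2r})/(w_2)$.

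For $S_3(n)$ I would compute $A^0_{\mathrm{PGL}_2}(\Delta_{2,n})$ by descending induction along the strata, starting from the deepest one $\Delta_{n/2,n}\cong P^{n/2}$. At each step the codimension-one localization sequence gives
$$0\to A^0_{\mathrm{PGL}_2}(\Delta_{r,n})\to A^0_{\mathrm{PGL}_2}(Y_{n-2r}\times P^r)\xrightarrow{\partial_r} A^0_{\mathrm{PGL}_2}(\Delta_{r+1,n}),$$
identifying $A^0_{\mathrm{PGL}_2}(\Delta_{r,n})$ with $\ker\partial_r$. Because the odd strata kill $w_2$ while the even strata reintroduce it, the intermediate groups depend genuinely on the parity of $r$, and no uniform statement holds along the chain; what must be extracted at the final step $r=2$ is that $\ker\partial_2$ inside $A^0_{\mathrm{PGL}_2}(Y_{n-4})$ consists exactly of the classes pulled back from $\operatorname{Spec}(k_0)$, which by Proposition \ref{inv-chow} yields $A^0_{\mathrm{PGL}_2}(\Delta_{2,n})=A^0_{\mathrm{PGL}_2}(\operatorname{Spec}(k_0))$. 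Since $A^0_{\mathrm{PGL}_2}(Y_{n-4})$ is already known from the earlier cases of the global induction, each $\partial_r$ is a concrete residue along $\Delta_{r+1,n}$; the pullbacks automatically lie in $\ker\partial_r$ because $\partial_r\circ j^\ast=0$, so the content is the reverse inclusion.

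For $S_2(n)$ I would use the localization sequence of $\Delta_{2,n}\hookrightarrow\Delta_{1,n}$,
$$0\to A^0_{\mathrm{PGL}_2}(\Delta_{1,n})\xrightarrow{j^\ast} A^0_{\mathrm{PGL}_2}(\Delta_{1,n}\smallsetminus\Delta_{2,n})\xrightarrow{\partial_1} A^0_{\mathrm{PGL}_2}(\Delta_{2,n}),$$
so that $S_2(n)$, the bijectivity of $j^\ast$, is equivalent to $\partial_1=0$. By $S_1(n-2)$ the source is the quotient $A^0_{\mathrm{PGL}_2}(Y_{n-2})/(w_2)$, every class of which is a pullback $\pi^\ast\beta$ from $Y_{n-2}$, and by $S_3(n)$ the target is the trivial module $A^0_{\mathrm{PGL}_2}(\operatorname{Spec}(k_0))$. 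I would conclude $\partial_1=0$ by showing that each such $\pi^\ast\beta$ extends across the divisor $\Delta_{2,n}$ and hence has vanishing residue, using that $\partial_1$ is compatible with the module structure over $A^0_{\mathrm{PGL}_2}(\operatorname{Spec}(k_0))$, so that it suffices to treat a set of generators.

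The main obstacle is the boundary map $\partial_2$ in $S_3(n)$: proving that its kernel is no larger than the pullbacks forces an explicit residue computation for the generators of $A^0_{\mathrm{PGL}_2}(Y_{n-4})$ along $\Delta_{3,n}$, and the bookkeeping is complicated by the alternation, as $r$ descends, between the genuine projective bundles (even $r$) and the Brauer--Severi quotients by $w_2$ (odd $r$) produced by the $S_1$ hypotheses. By comparison the vanishing $\partial_1=0$ required for $S_2(n)$ is softer, since there the target is already trivial and one only needs to extend the generators across the singular locus.
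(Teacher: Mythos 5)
Your setup (the stratification, the identifications via Proposition \ref{univ}, the parity of $r$ distinguishing genuine projective bundles from Brauer--Severi quotients) matches the paper's, but there is a genuine gap: you explicitly flag the computation of the boundary maps $\partial_r$ as ``the main obstacle'' and never resolve it, and this is exactly where the content of the lemma lies. The paper's key idea, which your proposal is missing, is to compare the boundary map $\partial: A^0_{\mathrm{PGL}_2}(\Delta_{r,n}\smallsetminus\Delta_{r+1,n})\to A^0_{\mathrm{PGL}_2}(\Delta_{r+1,n}\smallsetminus\Delta_{r+2,n})$ with the boundary map $\partial_1$ on $(P^{n-2r}\smallsetminus\Delta_{1,n-2r})\times P^r$ through the universal homeomorphisms of Proposition \ref{univ}, and then to compute the induced third horizontal map via the multiplication morphism $\psi:(f,g,h)\mapsto(f,gh)$ from $(P^{n-2r-2}\smallsetminus\Delta_{1,n-2r-2})\times P^r\times P^1$ to $(P^{n-2r-2}\smallsetminus\Delta_{1,n-2r-2})\times P^{r+1}$. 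The hypotheses $S_1(n-i)$ are used precisely to show $\psi^*$ is surjective, and then the projection formula $\psi_*\psi^*=\deg(\psi)=r+1$ shows that $\psi_*$ (hence the relevant pushforward) is an isomorphism for $r=2$ (since $3$ is a unit mod $2$) and zero for $r=1$ (since $2=0$ mod $2$). This single arithmetic fact is what proves both $S_3(n)$ and $S_2(n)$; without it your argument for $S_2(n)$ --- ``each class extends across the divisor $\Delta_{2,n}$ and hence has vanishing residue'' --- is circular, since the extendability of every class is exactly the surjectivity you are trying to prove, and the target $A^0_{\mathrm{PGL}_2}(\operatorname{Spec}(k_0))$ is not trivial when $p=2$ (it contains $w_2$), so vanishing of $\partial_1$ is not automatic.

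A second, structural issue: your descending induction through all strata down to $\Delta_{n/2,n}$ is both unnecessary and harder than what is needed. Since $\Delta_{r+2,n}$ has codimension two in $\Delta_{r,n}$, one has $A^0_{\mathrm{PGL}_2}(\Delta_{r,n})\cong A^0_{\mathrm{PGL}_2}(\Delta_{r,n}\smallsetminus\Delta_{r+2,n})$ directly, so only the two cases $r=1,2$ and a single boundary map for each ever need to be controlled; the deeper strata (and the bookkeeping of alternating parities you worry about) never enter. Your reduction of $A^0_{\mathrm{PGL}_2}(Y_{n-2r}\times P^r)$ for general odd $r>1$ to $S_1(n-2r)$ would also require a separate argument ($S_1$ only concerns products with $P^1$), but this becomes moot once only $r\le 2$ is used.
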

\begin{proof}
To prove the first point, we want to repeat the proof of \cite[4.4]{Pir2} basically word for word. There is only one additional statement that we have to prove when working with $\textrm{PGL}_2$ instead of $\textrm{GL}_2$, the fact that that given a $\mathrm{PGL}_2$ scheme $X$ the pullback through $X \times P^1 \times P^1 \rightarrow X \times P^1$ is an isomorphism on $A^{0}_{\mathrm{PGL}_2}(-)$.

The group $\mathrm{PGL}_2$ acts transitively on $P^1$, with stabilizer $\mathrm{H}\simeq \mathrm{G}_a\rtimes \mathrm{G}_m$. Then we have $\left[P^1/\mathrm{PGL}_2\right]\simeq \mathrm{BH}$, so $\left[P^1\times P^1/\mathrm{PGL}_2\right] \simeq \left[P^1/\mathrm{H} \right]$, and moreover the action of $\mathrm{H}$ can be lifted to a linear action on the vector space $F=E_1$. Then shows that given a $\mathrm{PGL}_2$-equivariant space $X$, we have $$\left[X\times P^1/\mathrm{PGL}_2\right]=\left[X/\mathrm{H}\right], \left[X\times P^1 \times P^1/\mathrm{PGL}_2\right]=\left[X \times P^1/\mathrm{H}\right]$$ and thus the pullback through the $\mathrm{PGL}_2$ equivariant projection $X \times P^1  \times P^1 \rightarrow X \times P^1$ is the same as the $\mathrm{H}$-equivariant pullback through $X \times P^1 \rightarrow X$ which is an isomorphism in codimension zero by the projective bundle formula.

Using this we have all the tools to repeat the diagram chase in \cite[4.4]{Pir2} step by step and prove the first point. For the sake of self containment we will repeat the proof. First, note that the case $n=2$ is trivial. Let $r \in \lbrace 1,2 \rbrace$.  As $A^{0}_{\mathrm{PGL}_2}(\Delta_{r,n})$ is isomorphic to $A^{0}_{\mathrm{PGL}_2}(\Delta_{r,n} \! \smallsetminus \! \Delta_{r+2,n})$ (because $\Delta_{r+2,n}$ has codimension two in $\Delta_{r,n}$) we can compute it using the following exact sequence:

 $$ 0 \rightarrow A^{0}_{\mathrm{PGL}_2}(\Delta_{r,n} \! \smallsetminus \! \Delta_{r+2,n}) \rightarrow A^{0}_{\mathrm{PGL}_2}(\Delta_{r,n} \! \smallsetminus \! \Delta_{r+1,n}) \xrightarrow{\partial} A^{0}_{\mathrm{PGL}_2}(\Delta_{r+1,n} \! \smallsetminus \! \Delta_{r+2,n}).$$
 
 When $r=2$, we want to prove that the kernel of $\partial$ is equal to the image of $A^{0}_{\mathrm{PGL}_2}(\operatorname{Spec}(k_0))$. This will then imply that $A^{0}_{\mathrm{PGL}_2}(\Delta_{r,n} \! \smallsetminus \! \Delta_{r+2,n})$ must be equal to $A^{0}_{\mathrm{PGL}_2}(\operatorname{Spec}(k_0))$. When $r=1$, we want to prove that $\partial$ is zero, so that the second arrow will be an isomorphism.
 
  The map $(P^{n-2r} \! \smallsetminus \! \Delta_{2,2r}) \times P^{r} \xrightarrow{\pi} \Delta_{r,n} \! \smallsetminus \! \Delta_{r+2,n}$ yields the following commutative diagram with exact columns:

\begin{center}
$\xymatrixcolsep{3pc}
\xymatrix{ A^{0}_{\mathrm{PGL}_2}((P^{n-2r} {\! \smallsetminus \!} \Delta_{2,n-2r}) \times P^r ) \ar@{->}[r]^{\pi_{*}} \ar@{->}[d] & A^{0}_{\mathrm{PGL}_2}(\Delta_{r,n} \! \smallsetminus \! \Delta_{r+2,n} )   \ar@{->>}[d] \\
A^{0}_{\mathrm{PGL}_2}((P^{n-2r} \! \smallsetminus \! \Delta_{1,n-2r}) \times P^r) \ar@{^{(}->>}[r]^{\pi_{*}} \ar@{->}[d]^{\partial_1} &  A^{0}_{\mathrm{PGL}_2}(\Delta_{r,n}\! \smallsetminus \! \Delta_{r+1,n}) \ar@{->}[d]^{\partial}\\
 A^{0}_{\mathrm{PGL}_2}((\Delta_{1,n-2r} \! \smallsetminus \! \Delta_{2,n-2r})\times P^r) \ar@{->}[r]^{\pi_{*}} & A^{0}_{\mathrm{PGL}_{2}}(\Delta_{r+1,n}\! \smallsetminus \! \Delta_{r+2,n})  } $
\end{center}

 The second horizontal map is an isomorphism because $\pi_*$ is a universal homeomorphism when restricted to $\Delta_{r,n} \! \smallsetminus \! \Delta_{r+1,n}$.
 
  The kernel of $\partial_{1}$ is the image of $A^{0}_{\mathrm{PGL}_2}(\operatorname{Spec}(k_0))$, as $\Delta_{2,n-2r} \times P^r$ has codimension $2$. 
 
  We claim that when $r=2$ the third horizontal map is an isomorphism, implying that the kernel of $\partial$ must also be the image of $A^{0}_{\mathrm{PGL}_2}(\operatorname{Spec}(k_0))$, and when $r=1$ the third horizontal map is zero, so that $\partial$ must be zero too.
 
 Let $\psi$ be the map $$(P^{n-2r-2} \! \smallsetminus \! \Delta_{1,n-2r-2})\times P^{r} \times P^1 \xrightarrow{\psi} (P^{n-2r-2} \! \smallsetminus \! \Delta_{1,n-2r-2})\times P^{r+1}$$ sending $(f,g,h)$ to $(f,gh)$. We have a commutative diagram:
 
\begin{center}
$\xymatrixcolsep{5pc}
\xymatrix{ (P^{n-2r-2} \! \smallsetminus \! \Delta_{1,n-2r-2}) \times P^1 \times P^{r}  \ar@{->}[r]^{\pi_1} \ar@{->}[d]^{\psi} & (\Delta_{1,n -2r} \! \smallsetminus \! \Delta_{2,n-2r}) \times P^r \ar@{->}[d]^{\pi}\\ 
(P^{n-2r-2} \! \smallsetminus \! \Delta_{1,n-2r-2})\times P^{r+1}  \ar@{->}[r]^{\pi_2} & \Delta_{r +1,n} \! \smallsetminus \! \Delta_{r +2,n}}$
\end{center}

Where $\pi_1$ and $\pi_2$ are defined respectively by $(f,g,h) \rightarrow (fg^2,h)$ and $(f,g) \rightarrow (fg^2)$. The maps $\pi_1$ and $\pi_2$ are universal homeomorphisms, so the pushforward maps $(\pi_1)_*,(\pi_2)_*$ are isomorphisms. Then if we prove that $\psi_*$ is an isomorphism $\pi_*$ will be an isomorphism too, and if $\psi_*$ is zero then $\pi_*$ will be zero too. Consider this last diagram:
 
\begin{center}
$\xymatrixcolsep{5pc}
\xymatrix{ (P^{n-2r-2} \! \smallsetminus \! \Delta_{1,n-2r-2})\times P^{r} \times P^1 \ar@{->}[dr]^{p_1} \ar@{->}[d]^{\psi}\\ 
(P^{n-2r-2} \! \smallsetminus \! \Delta_{1,n-2r-2})\times P^{r+1}  \ar@{->}[r]^{p_2} & P^{n-2r-2} \! \smallsetminus \! \Delta_{1,n-2r-2}}$
\end{center}
The pullbacks along $p_1$ and $p_2$ are both surjective, implying that the pullback of $\psi$ is surjective. We have $\psi_* ( \psi^* \alpha) = \operatorname{deg}(\psi) \alpha$ by the projection formula. Then as the degree of $\psi$ is $r + 1$, $\psi_*$ is an isomorphism if $r=2$ and zero if $r=1$.

\end{proof}

\begin{lemma}\label{recurs2}
Let $p$ be equal to $2$. Suppose that $\mathrm{S}_2(n)$ holds and that the pushforward $$A^0_{\mathrm{PGL}_2}(\Delta_{1,n}) \rightarrow A^1_{\mathrm{PGL}_2}(P^{n})$$ is zero. Then $\mathrm{S}_1(n)$ holds.
\end{lemma}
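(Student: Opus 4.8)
The plan is to set up a commutative ladder of two localization sequences and run a diagram chase. Write $U=P^{n}\smallsetminus\Delta_{1,n}$, and recall $n$ is even, so $P^n=P(E_n)$ carries a genuine linear $\mathrm{PGL}_2$-action. Since the pushforward $A^0_{\mathrm{PGL}_2}(\Delta_{1,n})\to A^1_{\mathrm{PGL}_2}(P^n)$ vanishes by hypothesis, the localization sequence of the codimension-one closed subscheme $\Delta_{1,n}\subset P^n$ collapses to a short exact sequence
\[
0\to A^0_{\mathrm{PGL}_2}(P^n)\xrightarrow{\iota} A^0_{\mathrm{PGL}_2}(U)\xrightarrow{\partial} A^0_{\mathrm{PGL}_2}(\Delta_{1,n})\to 0,
\]
while the analogous sequence for $\Delta_{1,n}\times P^1\subset P^n\times P^1$ is at least left exact. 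Pullback along the flat projections $-\times P^1\to -$ commutes with localization, giving a commutative ladder whose vertical arrows are the maps $\pi^{*}$. A routine chase that uses only left exactness of the bottom row shows: if $\pi^{*}\colon A^0_{\mathrm{PGL}_2}(P^n)\to A^0_{\mathrm{PGL}_2}(P^n\times P^1)$ is surjective with kernel $(w_2)$ and $\pi^{*}\colon A^0_{\mathrm{PGL}_2}(\Delta_{1,n})\to A^0_{\mathrm{PGL}_2}(\Delta_{1,n}\times P^1)$ is an isomorphism, then the middle vertical $\pi^{*}\colon A^0_{\mathrm{PGL}_2}(U)\to A^0_{\mathrm{PGL}_2}(U\times P^1)$ is surjective with kernel the image of $(w_2)$; since $\pi^{*}$ is a ring homomorphism killing $w_2$, that image is exactly the ideal $(w_2)$, which is $S_1(n)$.

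For the first outer vertical, the projective bundle formula gives $A^0_{\mathrm{PGL}_2}(P^n)=A^0_{\mathrm{PGL}_2}(\operatorname{Spec} k_0)=\operatorname{Inv}^{\bullet}(\mathrm{PGL}_2)$, which by Proposition \ref{PGL_2} is free on $1,w_2$ over $\operatorname{H}^{\bullet}(k_0)$. As in the proof of Lemma \ref{recurs1}, $\mathrm{PGL}_2$ acts transitively on $P^1$ with stabilizer $H\cong\mathrm{G}_a\rtimes\mathrm{G}_m$, so $A^0_{\mathrm{PGL}_2}(P^n\times P^1)=A^0_{H}(P^n)$; since $H$ acts linearly on $E_n$ the projective bundle formula again gives $A^0_H(P^n)=A^0_H(\operatorname{Spec} k_0)$, which equals $A^0_{\mathrm{G}_m}(\operatorname{Spec} k_0)=\operatorname{H}^{\bullet}(k_0)$ by Lemma \ref{eq2}. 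Under these identifications the first vertical is the restriction $\operatorname{Inv}^{\bullet}(\mathrm{PGL}_2)\to\operatorname{Inv}^{\bullet}(H)$, sending $1$ to $1$; it kills $w_2$ because $H$ is special, so every $H$-torsor over a field is trivial and the associated $\mathrm{PGL}_2$-torsor is trivial, on which the normalized class $w_2$ vanishes. Hence this vertical is surjective with kernel $(w_2)$.

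The main obstacle is the second outer vertical $\pi^{*}_{\Delta}\colon A^0_{\mathrm{PGL}_2}(\Delta_{1,n})\to A^0_{\mathrm{PGL}_2}(\Delta_{1,n}\times P^1)$, and this is where $S_2(n)$ is used. Put $\widetilde\Delta=\Delta_{1,n}\smallsetminus\Delta_{2,n}$ and $U''=P^{n-2}\smallsetminus\Delta_{1,n-2}$. By $S_2(n)$ the restriction $r\colon A^0_{\mathrm{PGL}_2}(\Delta_{1,n})\to A^0_{\mathrm{PGL}_2}(\widetilde\Delta)$ is an isomorphism. The universal homeomorphism $\pi_{1,n}$ of Proposition \ref{univ} identifies $\widetilde\Delta$ with $U''\times P^1$ and (by base change) $\widetilde\Delta\times P^1$ with $U''\times P^1\times P^1$, carrying the projection $\widetilde\Delta\times P^1\to\widetilde\Delta$ to the projection forgetting the last factor; by the $P^1\times P^1$-pullback computation inside the proof of Lemma \ref{recurs1} the latter is an isomorphism on $A^0_{\mathrm{PGL}_2}$, so $\pi^{*}_{\widetilde\Delta}\colon A^0_{\mathrm{PGL}_2}(\widetilde\Delta)\to A^0_{\mathrm{PGL}_2}(\widetilde\Delta\times P^1)$ is an isomorphism. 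Finally I would show the restriction $r'\colon A^0_{\mathrm{PGL}_2}(\Delta_{1,n}\times P^1)\to A^0_{\mathrm{PGL}_2}(\widetilde\Delta\times P^1)$ is an isomorphism: it is injective by localization, and its surjectivity is governed by the boundary $\partial''\colon A^0_{\mathrm{PGL}_2}(\widetilde\Delta\times P^1)\to A^0_{\mathrm{PGL}_2}(\Delta_{2,n}\times P^1)$.

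The delicate point is precisely the vanishing of $\partial''$, i.e. propagating $S_2(n)$ from the $\mathrm{PGL}_2$-picture to the $-\times P^1$ (equivalently $H$-equivariant) picture. Since boundary maps commute with the flat pullback $-\times P^1$, we have $\partial''\circ\pi^{*}_{\widetilde\Delta}=\pi^{*}\circ\partial''_0$, where $\partial''_0\colon A^0_{\mathrm{PGL}_2}(\widetilde\Delta)\to A^0_{\mathrm{PGL}_2}(\Delta_{2,n})$ is the boundary whose vanishing is equivalent to $S_2(n)$; because $\pi^{*}_{\widetilde\Delta}$ is surjective, this forces $\partial''=0$ and hence $r'$ is an isomorphism. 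The commuting square $r'\circ\pi^{*}_{\Delta}=\pi^{*}_{\widetilde\Delta}\circ r$ then exhibits $\pi^{*}_{\Delta}$ as a composite of isomorphisms. Feeding the two outer verticals into the chase of the first paragraph yields $S_1(n)$.
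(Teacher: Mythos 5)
Your proposal is correct and follows essentially the same route as the paper: the same two localization ladders (one for $\Delta_{1,n}\subset P^n$ made short exact by the vanishing pushforward, one for $\Delta_{2,n}\subset\Delta_{1,n}$ made exact by $S_2(n)$), the same identification of the $P^1$-factor with $H$-equivariant theory, and the same reduction of the $\Delta_{1,n}$-vertical to the $P^1\times P^1$ pullback isomorphism via the universal homeomorphism. Your explicit verification that the boundary commutes with the flat pullback by $P^1$ is a slightly more detailed justification of a step the paper handles by a direct chase, but the argument is the same in substance.
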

\begin{proof}
Consider the following commutative diagram with exact columns:

\begin{center}
$\xymatrixcolsep{3pc}
\xymatrix{ 0 \ar@{->}[d] & 0 \ar@{->}[d]\\
A^{0}_{\textrm{PGL}_2}(P^n) \ar@{->>}[r] \ar@{->}[d] & A^{0}_{\textrm{PGL}_2}(P^n \times P^1 )   \ar@{->}[d] \\
A^{0}_{\textrm{PGL}_2}((P^{n} \! \smallsetminus \! \Delta_{1,n})) \ar@{->}[r] \ar@{->}[d] &  A^{0}_{\textrm{PGL}_2}(((P^{n} \! \smallsetminus \! \Delta_{1,n}))\times P^1) \ar@{->}[d]\\
 A^{0}_{\textrm{PGL}_2}((\Delta_{1,n})) \ar@{->}[r] \ar@{->}[d] & A^{0}_{\textrm{PGL}_{2}}(\Delta_{1,n}\times P^1) \\
 0 } $
\end{center}

We know that the left column is exact as the map $A^{0}_{\textrm{PGL}_2}(\Delta_{1,n}) \rightarrow A^1_{\mathrm{PGL}_2}(P^n)$ is zero by hypothesis. The fact that the topmost horizontal map is surjective can be seen exactly as for $P^1$. A simple diagram chase shows that if the last horizontal map is surjective, then the central horizontal map must be surjective too. To prove this we use a second commutative diagram with exact columns:

$$
\xymatrixcolsep{3pc}
\xymatrix{ 0 \ar@{->}[d] & 0 \ar@{->}[d]\\
A^{0}_{\textrm{PGL}_2}(\Delta_{1,n}) \ar@{->}[r] \ar@{->}[d] & A^{0}_{\textrm{PGL}_2}(\Delta_{1,n} \times P^1 )   \ar@{->}[d] \\
A^{0}_{\textrm{PGL}_2}((\Delta_{1,n} \! \smallsetminus \! \Delta_{2,n})) \ar@{->}[r] \ar@{->}[d] &  A^{0}_{\textrm{PGL}_2}(((\Delta_{1,n} \! \smallsetminus \! \Delta_{2,n}))\times P^1) \ar@{->}[d]\\
0 \ar@{->}[r] & A^{0}_{\textrm{PGL}_{2}}(\Delta_{2,n}\times P^1)  } 
$$

The left column is exact thanks to $\mathrm{S}_2(n)$. To conclude we only need to prove that the central horizontal map is surjective. But this is just the map $$A^0_{\mathrm{PGL}_2}((P^{n-2}\! \smallsetminus \! \Delta_{1,n-2})\times P^1)\rightarrow A^0_{\mathrm{PGL}_2}((P^{n-2}\! \smallsetminus \! \Delta_{1,n-2})\times P^1 \times P^1)$$which is an isomorphism. 

This also tells us that the map $$A^{0}_{\textrm{PGL}_2}(\Delta_{1,n}) \rightarrow A^{0}_{\textrm{PGL}_2}(\Delta_{1,n} \times P^1 ) $$ is an isomorphism, so the elements in the kernel of $$A^0_{\mathrm{PGL}_2}(P^{n} \! \smallsetminus \! \Delta_{1,n})\rightarrow A^0_{\mathrm{PGL}_2}((P^{n} \! \smallsetminus \! \Delta_{1,n})\times P^1)$$must come from $A^0_{\mathrm{PGL}_2}(P^n)$, which completes our description.
\end{proof}

The lemmas almost provides an inductive step, as its conclusions provide all of the hypotheses for the next case except for the requirement that the pushforwards $A^0_{\mathrm{PGL}_2}(\Delta_{1,n}) \rightarrow A^1_{\mathrm{PGL}_2}(P^{n})$ are zero.

\begin{corollary}\label{recurs3}
Suppose that for all $j \leq n$ we know that that the pushforward $A^0_{\mathrm{PGL}_2}(\Delta_{1,j}) \rightarrow A^1_{\mathrm{PGL}_2}(P^{j})$ is zero. Then for $j \leq n$ the conditions $\mathrm{S}_{1}(j),\mathrm{S}_2(j)$ and $\mathrm{S}_3(j)$ hold.
\end{corollary}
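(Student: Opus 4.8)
The plan is to run a single strong induction on the even integer $j$, interlocking the two preceding lemmas. For the base case $j=0$ I would observe that $S_1(0)$ is precisely Proposition \ref{P^1}, as noted in the remark preceding Lemma \ref{recurs1}, while $S_2(0)$ and $S_3(0)$ hold trivially because $\Delta_{1,0}$ and $\Delta_{2,0}$ are empty.

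For the inductive step I would fix an even $m$ with $2 \le m \le n$ and assume that $S_1(j)$ has already been established for every even $j$ with $0 \le j < m$. This is exactly the hypothesis ``$S_1(m-i)$ holds for all $i \ge 2$'' demanded by Lemma \ref{recurs1}, since the relevant indices $m-2, m-4, \dots, 0$ are all even and strictly smaller than $m$; applying that lemma then produces $S_2(m)$ and $S_3(m)$. Next, because $m \le n$, the standing hypothesis of the corollary guarantees that the pushforward $A^0_{\mathrm{PGL}_2}(\Delta_{1,m}) \to A^1_{\mathrm{PGL}_2}(P^m)$ vanishes. Feeding this vanishing together with the freshly obtained $S_2(m)$ into Lemma \ref{recurs2} yields $S_1(m)$, which closes the induction and establishes all three statements at $m$.

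I expect the argument to be essentially formal, since the genuine content already resides in Lemmas \ref{recurs1} and \ref{recurs2} together with the base case Proposition \ref{P^1}. The only point that requires attention is the indexing in Lemma \ref{recurs1}: its hypothesis ranges over \emph{all} smaller even values of the index simultaneously, so ordinary induction on $j$ would not suffice and strong induction is what makes the two lemmas chain together cleanly. This is a matter of bookkeeping rather than a real obstacle.
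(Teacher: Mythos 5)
Your proposal is correct and matches the paper's own argument, which likewise chains Lemmas \ref{recurs1} and \ref{recurs2} by strong induction on the even index starting from the trivial cases $j=0,2$ (with $S_1(0)$ given by Proposition \ref{P^1}). Your explicit bookkeeping of the inductive step is just a more detailed writeup of the same one-line proof in the paper.
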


\begin{proof}
Given the hypothesis and the trivial cases $j=0,j=2$ lemmas (\ref{recurs1}, \ref{recurs2}) inductively prove all three properties for all $j \leq n$.
\end{proof}

The statement needed for $p \neq 2$ is more straightforward, although it relies on the same argument.

\begin{proposition}\label{even1}
Suppose $p$ is different from $2$. Then $A^{0}_{\mathrm{PGL}_2}(\Delta_{1,n})$ is trivial.
\end{proposition}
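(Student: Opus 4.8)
The plan is to follow the diagram chase of Lemma \ref{recurs1} in the case $r=1$, tracking how the conclusion changes once $2$ is invertible, and then to show that the resulting ``boundary-free'' contribution is trivial. (For the few small values of $n$ where the deeper strata degenerate the statement is immediate, so I assume $n$ large enough that $\Delta_{3,n}$ is nonempty.) Since $\Delta_{3,n}$ has codimension $2$ in $\Delta_{1,n}$, removing it does not affect the top Chow group, so $A^0_{\mathrm{PGL}_2}(\Delta_{1,n}) = A^0_{\mathrm{PGL}_2}(\Delta_{1,n}\smallsetminus\Delta_{3,n})$. Viewing $\Delta_{2,n}\smallsetminus\Delta_{3,n}$ as a codimension-one closed subscheme with open complement $\Delta_{1,n}\smallsetminus\Delta_{2,n}$, the localization sequence identifies $A^0_{\mathrm{PGL}_2}(\Delta_{1,n})$ with the kernel of the boundary
\[
\partial\colon A^0_{\mathrm{PGL}_2}(\Delta_{1,n}\smallsetminus\Delta_{2,n}) \longrightarrow A^0_{\mathrm{PGL}_2}(\Delta_{2,n}\smallsetminus\Delta_{3,n}),
\]
so it suffices to compute this kernel.

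Next I would set up exactly the commutative diagram of Lemma \ref{recurs1} for $r=1$, whose horizontal maps are the pushforwards along the universal homeomorphisms of Proposition \ref{univ}. Its middle row identifies $A^0_{\mathrm{PGL}_2}(\Delta_{1,n}\smallsetminus\Delta_{2,n})$ with $A^0_{\mathrm{PGL}_2}((P^{n-2}\smallsetminus\Delta_{1,n-2})\times P^1)$, its bottom row sends $A^0_{\mathrm{PGL}_2}((\Delta_{1,n-2}\smallsetminus\Delta_{2,n-2})\times P^1)$ to $A^0_{\mathrm{PGL}_2}(\Delta_{2,n}\smallsetminus\Delta_{3,n})$, and the left vertical boundary $\partial_1$ matches $\partial$ under these identifications. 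The only place where $p\neq 2$ enters is the degree computation for the auxiliary map $\psi$ of Lemma \ref{recurs1}: here $\deg\psi = r+1 = 2$, which is now invertible, so the identity $\psi_*\psi^* = 2\cdot\mathrm{id}$ together with the surjectivity of $\psi^*$ forces $\psi_*$ -- and hence the bottom horizontal pushforward -- to be an \emph{isomorphism} rather than zero. Chasing the diagram, the injectivity of the bottom map together with the fact that the middle map is an isomorphism gives $\ker\partial = \pi_*(\ker\partial_1)$, and by localization $\ker\partial_1$ is the image of $A^0_{\mathrm{PGL}_2}((P^{n-2}\smallsetminus\Delta_{2,n-2})\times P^1) = A^0_{\mathrm{PGL}_2}(P^{n-2}\times P^1)$, the last equality because $\Delta_{2,n-2}\times P^1$ has codimension $2$.

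It then remains to show that $A^0_{\mathrm{PGL}_2}(P^{n-2}\times P^1)$ is trivial. Since $\mathrm{PGL}_2$ acts transitively on $P^1$ with stabilizer the Borel $B \cong \mathrm{G}_m \ltimes \mathrm{G}_a$, one has $[P^{n-2}\times P^1/\mathrm{PGL}_2] = [P^{n-2}/B]$, and Lemma \ref{eq2} removes the unipotent part to give $A^0_{\mathrm{PGL}_2}(P^{n-2}\times P^1) = A^0_{\mathrm{G}_m}(P^{n-2})$. Now $[P^{n-2}/\mathrm{G}_m] \to B\mathrm{G}_m$ is the projectivization of the vector bundle associated to $E_{n-2}$, so the projective bundle formula gives $A^0_{\mathrm{G}_m}(P^{n-2}) = A^0_{\mathrm{G}_m}(\operatorname{Spec}(k_0)) = \operatorname{H}^{\bull}(k_0)$, using the computation $A_{\mathrm{G}_m}(\operatorname{Spec}(k_0)) = \operatorname{H}^{\bull}(k_0)[t]$ recorded in the proof of Proposition \ref{P^1}. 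Since the universal-homeomorphism pushforward sends $1$ to $1$, the image $\pi_*(\ker\partial_1)$ is exactly $\operatorname{H}^{\bull}(k_0)\cdot 1$, whence $A^0_{\mathrm{PGL}_2}(\Delta_{1,n}) = \ker\partial = \operatorname{H}^{\bull}(k_0)$ is trivial.

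The main obstacle is the second paragraph. One must verify carefully that the relevant boundary is governed precisely by the degree-$2$ map $\psi$, so that $p\neq 2$ is exactly what turns the bottom pushforward from the zero map (as in the $p=2$ case, where it produced $S_2(n)$) into an injection, and that this injectivity is what collapses $\ker\partial$ onto the image of $A^0_{\mathrm{PGL}_2}(P^{n-2}\times P^1)$. This is genuinely the crux, because the open stratum $A^0_{\mathrm{PGL}_2}(\Delta_{1,n}\smallsetminus\Delta_{2,n}) = A^0_{\mathrm{G}_m}(P^{n-2}\smallsetminus\Delta_{1,n-2})$ can itself be nontrivial -- the order-$7$ phenomenon at $p=7$ lives on the squarefree locus and is $\mathrm{G}_m$-equivariantly visible -- so the triviality of $A^0_{\mathrm{PGL}_2}(\Delta_{1,n})$ must come from $\partial$ being injective on that extra part, not from triviality of the open stratum.
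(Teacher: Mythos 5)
Your overall route is the same as the paper's: run the diagram of Lemma \ref{recurs1} with $r=1$, observe that $\deg\psi=r+1=2$ is now invertible so that $\psi_*$ becomes an isomorphism rather than the zero map, deduce that the bottom horizontal pushforward is injective, and conclude that $\ker\partial$ is the image of $A^0_{\mathrm{PGL}_2}(P^{n-2}\times P^1)$. Your final step (identifying $A^0_{\mathrm{PGL}_2}(P^{n-2}\times P^1)$ with $A^0_{\mathrm{G}_m}(P^{n-2})=\operatorname{H}^{\bull}(k_0)$ via the stabilizer $H\cong\mathrm{G}_m\ltimes\mathrm{G}_a$ and Lemma \ref{eq2}) is correct and makes explicit a point the paper leaves implicit.

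There is, however, a genuine gap where you write that ``the identity $\psi_*\psi^*=2\cdot\mathrm{id}$ together with the surjectivity of $\psi^*$ forces $\psi_*$ to be an isomorphism.'' The surjectivity of $\psi^*$ is not available for free: in Lemma \ref{recurs1} it is deduced from the surjectivity of the pullbacks along $p_1$ and $p_2$, and the surjectivity of $p_1^*$ --- that is, of $A^0_{\mathrm{PGL}_2}(P^{n-4}\smallsetminus\Delta_{1,n-4})\to A^0_{\mathrm{PGL}_2}((P^{n-4}\smallsetminus\Delta_{1,n-4})\times P^1\times P^1)$ --- is exactly what the inductive hypothesis $S_1(n-4)$ supplies there. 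But $S_1$ is a $p=2$ statement (its very formulation involves $w_2$), and Proposition \ref{even1} carries no inductive hypothesis, so you cannot quote it. Nor is the surjectivity obvious for $p\neq 2$: the target equals $A^0_{H}((P^{n-4}\smallsetminus\Delta_{1,n-4})\times P^1)$, and since $H$ is special this is controlled by the non-equivariant $A^0$ of the squarefree locus, which contains an extra degree-one class precisely when $p$ divides $n-5$ --- the very phenomenon you flag in your closing paragraph. This is where the paper spends essentially all of its effort: it compares the $\mathrm{PGL}_2$-equivariant localization sequence for $\Delta_{1,n-4}\subset P^{n-4}$ with the non-equivariant one, observes that $1\in A^0(\Delta_{1,n-4})$ pushes forward to zero equivariantly if and only if it does so non-equivariantly, and uses this to produce a $\mathrm{PGL}_2$-equivariant lift $\alpha'$ of the extra class $\alpha$ whenever it exists, which is what gives the required surjectivity. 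Until that argument (or a substitute) is supplied, your diagram chase does not close: without knowing $\psi^*$ is surjective you only get that $\psi_*$ is surjective, not injective, and injectivity of the bottom pushforward is what you actually need.
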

\begin{proof}
 We want to use the same reasoning as in the lemma (\ref{recurs1}). Then at the last point we will obtain that $\psi_*$ is an isomorphism if $r+1$ does not divide $p$, which is what happens for $r=1$, proving our claim. All of the diagram chases in the previous lemma work for $p \neq 2$, so we only have to show that the map $$(P^{n-4} \! \smallsetminus \! \Delta_{1,n-4})\times P^{1} \times P^1 \rightarrow (P^{n-4} \! \smallsetminus \! \Delta_{1,n-4})$$ induces a surjective pullback on $A^0_{PGL_2}(-)$. To do so, note the following. We have $$A^0_{\mathrm{PGL}_2}((P^{n-4} \! \smallsetminus \! \Delta_{1,n-4})\times P^{1} \times P^1) \simeq A^{0}_{\mathrm{H}}((P^{n-4} \! \smallsetminus \! \Delta_{1,n-2r-2})\times P^{1})$$
 where $\mathrm{H}$ is the stabilizer of a point in $P^1$ as above. As $\mathrm{H}$ is a special group the pullback $$A^{0}_{\mathrm{H}}((P^{n-4} \! \smallsetminus \! \Delta_{1,n-4})\times P^{1}) \rightarrow  A^{0}((P^{n-4} \! \smallsetminus \! \Delta_{1,n-4})\times P^{1})$$ has to be injective. Now one can use the same techniques as in \cite[4.4]{Pir2}, or equivalently as in the previous lemma to easily show that when $p \neq 2$ the non-equivariant group $A^{0}( \Delta_{1,n-4}\times P^{1})$ is trivial, and thus $A^{0}((P^{n-4} \! \smallsetminus \! \Delta_{1,n-4})\times P^{1})$ is either trivial or generated by $1$ and an element in degree one corresponding to an equation for $\Delta_{1,n-4}$ if the class of $\Delta_{1,n-4}$ is equal to zero in $A^1(P^{n-4}\times P^1)$. In the latter case, consider the following commutative diagram induced by the pullback from equivariant to non-equivariant Chow groups with coefficients
 
 \begin{center}
$\xymatrixcolsep{3pc}
\xymatrix{  A^{0}_{\mathrm{PGL}_2}(P^{n-4}) \ar@{->}[r] \ar@{->}[d] & A^{0}(P^{n-4})   \ar@{->}[d] \\
A^{0}_{\mathrm{PGL}_2}(P^{n-4} \! \smallsetminus \! \Delta_{1,n-4}) \ar@{->}[r] \ar@{->}[d]^{\partial} &  A^{0}(P^{n-4} \! \smallsetminus \! \Delta_{1,n-4}) \ar@{->}[d]^{\partial}\\
 A^{0}_{\mathrm{PGL}_2}(\Delta_{1,n-4} ) \ar@{->}[r] \ar@{->}[d] & A^{0}(\Delta_{1,n-4})  \ar@{->}[d]\\
  A^{1}_{\mathrm{PGL}_2}(P^{n-4}) \ar@{->}[r]  & A^{1}(P^{n-4})} $
\end{center}

The top and bottom horizontal maps are isomorphisms, and one can see using the fact that both groups on top are trivial an both groups on the bottom are generated as $\operatorname{H}^{\mbox{\tiny{$\bullet$}}}$-module by the first Chern class of $\mathcal{O}_{P^{n-4}}(-1)$. Moreover $A^{0}(P^{n-4} \! \smallsetminus \! \Delta_{1,n-4})$ is generated as a $\operatorname{H}^{\mbox{\tiny{$\bullet$}}}$-module by $1$ and an element $\alpha$ such that $\partial(\alpha)=1 \in A^{0}_{\mathrm{PGL}_2}(\Delta_{1,n-4} )$. 

The third horizontal map maps $1 \in A^{0}_{\mathrm{PGL}_2}(\Delta_{1,n-4} )$ to $1 \in A^{0}(\Delta_{1,n-4} )$, which shows that $1$ maps to zero in the equivariant group $ A^{1}_{\mathrm{PGL}_2}((P^{n-4})$ if and only if it maps to zero in $ A^{1}((P^{n-4})$. Then there must be an element $$\alpha' \in  A^{0}_{\mathrm{PGL}_2}((P^{n-4}\! \smallsetminus \! \Delta_{1,n-4})$$ which maps to $\alpha \in  A^{0}((P^{n-4}\! \smallsetminus \! \Delta_{1,n-4})$, showing that the pullback $$A^{0}_{\mathrm{PGL}_2}(P^{n-4} \! \smallsetminus \! \Delta_{1,n-4}) \rightarrow A^0((P^{n-4} \! \smallsetminus \! \Delta_{1,n-4})\times P^1)$$ is surjective. This implies surjectivity for $$A^0_{\mathrm{PGL}_2}(P^{n-4} \! \smallsetminus \! \Delta_{1,n-4}) \rightarrow A^0_{\mathrm{PGL}_2}((P^{n-4} \! \smallsetminus \! \Delta_{1,n-4})\times P^{1} \times P^1),$$ as claimed.
\end{proof}

In the rest of the section we will slightly abuse notation and always denote by $t$ the (equivariant) class $c_1(\mathcal{O}_{P^n}(-1))$, independently of $n$. When in presence of a product $P^n \times P^m$ we will always denote by $t$ the one coming from the first component. 

Note that the pullback of $\mathcal{O}_{P^n}(-1)$ through the maps $i\circ\pi_{r,n}:P^{n-2r}\times P^r \rightarrow P^{n}$ is equal to $\operatorname{p_1}^*\mathcal{O}_{P^{n-2r}}(-1) \otimes \operatorname{p_2}^*\mathcal{O}_{P^{r}}(-1)^2$, so with the notation above when $p=2$ we have $(i\circ\pi_{r,n})^*t=t $.

\vspace{0.5cm}

Let $n$ be an even positive integer. By  the projective bundle formula we have $A^{\mbox{\tiny{$\bullet$}}}_{\mathrm{PGL}_2}(P^n)=A^{\mbox{\tiny{$\bullet$}}}_{\mathrm{PGL}_2}(\operatorname{Spec}(k_0))\left[ t \right]/(f_n)$ for some polynomial $f_n$ that is monic of degree $n+1$ in $t$. By \cite[6.1]{FulgViv} the $f_n$ are the following elements of $A^{\mbox{\tiny{$\bullet$}}}_{\mathrm{PGL}_2}(P^n)$:

$$f_n = \begin{cases} t^{n+4/4}(t^3 + c_2t + c_3)^{n/4}, & \mbox{if } n\mbox{ is divisible by 4} \\ t^{n-2/4}(t^3 + c_2t + c_3)^{n+2/4}, & \mbox{if } n\mbox{ is not.} \end{cases}$$

\begin{lemma}\label{c3odd}
Suppose that $p=2$. Then the class of $c_3$ is zero in $A_{\mathrm{PGL}_2}^{\mbox{\tiny{$\bullet$}}}(P^n)$ if and only if $n$ is odd.
\end{lemma}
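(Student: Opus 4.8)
The statement splits into the two parity cases, which I would treat separately. Since $c_3$ sits in codimension $3$ and degree $0$, its class in $A^{\bull}_{\mathrm{PGL}_2}(P^n)$ lives in the image of the ordinary equivariant Chow groups, and the whole question is whether this single class vanishes. For $n$ even the answer is no, and this is the easy implication: by the projective bundle formula $A^{\bull}_{\mathrm{PGL}_2}(P^n)=R[t]/(f_n)$ with $R:=A^{\bull}_{\mathrm{PGL}_2}(\operatorname{Spec}(k_0))$ is a free $R$-module with basis $1,t,\dots,t^n$, because $f_n$ is monic of degree $n+1$ in $t$. As $c_3$ is a nonzero element of $R$ (it is a genuine polynomial generator in the description of Corollary \ref{SO3} and Proposition \ref{PGL_2}), its image $c_3\cdot 1$ is nonzero, so $c_3\neq 0$ whenever $n$ is even.

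The real content is the odd case: I must show $c_3=0$ in $A^{\bull}_{\mathrm{PGL}_2}(P^n)$ for $n$ odd. The plan is to compare with the $\mathrm{GL}_2$-equivariant theory, using precisely the non-linearizability of $\mathcal{O}_{P^n}(1)$ that singles out odd $n$. Let $\mathrm{GL}_2$ act on $P^n=P(\operatorname{Sym}^n)$ through $\operatorname{Sym}^n$, so that the central $\mathbb{G}_m=Z\subset\mathrm{GL}_2$ acts trivially on $P^n$. When $n$ is odd the twist $N:=\mathcal{O}_{P^n}(1)\otimes\det^{(n+1)/2}$ is a well-defined $\mathrm{GL}_2$-equivariant line bundle on which $Z$ acts with weight $1$; this is the one place oddness enters, since for even $n$ every equivariant line bundle carries an even central weight. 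Writing $W$ for the complement of the zero section of $N$, the subgroup $Z$ acts on the $\mathbb{G}_m$-torsor $W\to P^n$ through the structural $\mathbb{G}_m$, hence freely, with $W/Z\cong P^n$. Applying Lemma \ref{eq1} to the free central subgroup $Z$ of $\mathrm{GL}_2$ gives
$$A^{\bull}_{\mathrm{GL}_2}(W)\cong A^{\bull}_{\mathrm{PGL}_2}(P^n),$$
and composing with the affine bundle isomorphism $A^{\bull}_{\mathrm{GL}_2}(P^n)\cong A^{\bull}_{\mathrm{GL}_2}(N)$ and restriction to the open subset $W$ produces a ring homomorphism $\iota\colon A^{\bull}_{\mathrm{GL}_2}(P^n)\to A^{\bull}_{\mathrm{PGL}_2}(P^n)$ compatible with the structure maps to the classifying stacks.

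With this identification the computation is immediate. Under it the $\mathrm{PGL}_2$-class $c_3\in A^{\bull}_{\mathrm{PGL}_2}(P^n)$ is the image under $\iota$ of the pullback of $c_3$ to $A^{\bull}_{\mathrm{GL}_2}(\operatorname{Spec}(k_0))$, because the structure map of $[W/\mathrm{GL}_2]\cong[P^n/\mathrm{PGL}_2]$ to $B\mathrm{PGL}_2$ factors as $[W/\mathrm{GL}_2]\to B\mathrm{GL}_2\to B\mathrm{PGL}_2$. But $c_3$ is the top Chern class of the standard three-dimensional representation of $\mathrm{SO}_3=\mathrm{PGL}_2$, and pulled back to $\mathrm{GL}_2$ this is the adjoint representation $\operatorname{Sym}^2\!F\otimes\det^{-1}$, whose Chern roots are $x-y,\,0,\,y-x$; its top Chern class therefore vanishes in $A^{\bull}_{\mathrm{GL}_2}(\operatorname{Spec}(k_0))$. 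Hence $c_3=0$ in $A^{\bull}_{\mathrm{PGL}_2}(P^n)$, which finishes the odd case.

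The main obstacle, and essentially the only delicate point, is the bookkeeping of this comparison map: one must verify that $Z$ acts freely on $W$ with quotient $P^n$ (so that Lemma \ref{eq1} applies) and that the two descriptions of the structure map of $[W/\mathrm{GL}_2]\cong[P^n/\mathrm{PGL}_2]$ to $B\mathrm{PGL}_2$ agree, so that the $\mathrm{PGL}_2$-class $c_3$ genuinely is pulled back from the $\mathrm{GL}_2$-theory where it dies. The parity is used in exactly one spot — the existence of a central weight-$1$ equivariant line bundle — which is what makes the dichotomy transparent. I would avoid the more naive inductive route through the universal homeomorphisms $P^{n-2}\times P^1\to\Delta_{1,n}$ together with the identity $(i\circ\pi_{1,n})^{*}t=t$, since the projection formula there only yields the weaker relation $c_3\cdot[\Delta_{1,n}]=0$ rather than $c_3=0$ itself.
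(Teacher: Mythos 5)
Your proof is correct, but the odd case takes a genuinely different route from the paper's. The even case is the same (projective bundle formula plus $c_3\neq 0$ in $A^{\bull}_{\mathrm{PGL}_2}(\operatorname{Spec}(k_0))$). For $n$ odd the paper argues in two lines: it applies the projection formula to the $\mathrm{PGL}_2$-equivariant multiplication map $\mu\colon P^1\times P^{n-1}\to P^n$, $(g,f)\mapsto gf$, which is finite surjective of degree $n$, and uses Proposition \ref{P^1} to see that $c_3$ pulls back to zero on $P^1$ and hence on $P^1\times P^{n-1}$; then $n\,c_3=\mu_*\mu^*c_3=0$, and $n$ odd is a unit mod $2$. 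Note that this is not the route you dismiss in your last paragraph: the map goes onto all of $P^n$, not onto $\Delta_{1,n}$, so the projection formula there really does give $c_3=0$ and not merely $c_3\cdot[\Delta_{1,n}]=0$. Your alternative --- realizing $[P^n/\mathrm{PGL}_2]$ as $[W/\mathrm{GL}_2]$ for $W$ the complement of the zero section of a central-weight-one equivariant line bundle, and then killing $c_3$ because the standard representation of $\mathrm{SO}_3$ pulls back to $\operatorname{Sym}^2F\otimes\det^{-1}$ with Chern roots $x-y,0,y-x$ --- is valid (the torsor compatibility you flag does hold, since the $\mathrm{PGL}_2$-torsor associated to $W\to[W/\mathrm{GL}_2]$ is exactly $W/Z=P^n\to[P^n/\mathrm{PGL}_2]$), and it has the merit of explaining the parity dichotomy conceptually: the action linearizes over $\mathrm{GL}_2$ precisely when $n$ is odd, and $c_3$ already dies on $B\mathrm{GL}_2$. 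One cosmetic point: with the paper's convention that $F$ is the \emph{dual} of the standard representation, the center acts on $\mathcal{O}_{P^n}(1)\otimes\det^{(n+1)/2}$ with weight $2n+1$ rather than $1$; any twist of odd central weight $\pm1$ (e.g.\ $\det^{(1-n)/2}$) does the job, and such a twist exists exactly when $n$ is odd, so the argument is unaffected.
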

\begin{proof}
If $n$ is even then $P^n$ is the projectivization of a representation of $\mathrm{PGL}_2$ and the projective bundle formula allows us to conclude immediately. If $n$ is odd we just have apply the projection formula to the equivariant map $P^1 \times P^{i-1} \rightarrow P^{i}$ and use the result for $n=1$, which is proven in proposition (\ref{P^1}).
\end{proof}

We can use this to construct an element in the annihilator of the image of the pushforward $i_*(A^0_{\mathrm{PGL}_2}(\Delta_{1,n}))$.

\begin{proposition}
 Let $n$ be an even positive integer, and let $\alpha$ be an element of $A^0_{\mathrm{PGL}_2}(\Delta_{1,n})$. Then:
\begin{itemize}
\item If $n$ is divisible by $4$, the image of $\alpha$ in $A^{\mbox{\tiny{$\bullet$}}}_{\mathrm{PGL}_2}(P^n)$ is annihilated by $c_3^{n/4} f_{n-4}\ldots f_4 t$.
\item If $n$ is not divisible by $4$, the image of $\alpha$ in $A^{\mbox{\tiny{$\bullet$}}}_{\mathrm{PGL}_2}(P^i)$ is annihilated by $c_3^{n+2/4} f_{n-4}\ldots f_2$.
\end{itemize}
\end{proposition}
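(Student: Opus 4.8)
The plan is to realize the stated annihilator as a product running over the strata of the discriminant. Writing $\rho_r := i\circ \pi_{r,n}\colon P^{n-2r}\times P^r\to P^n$, I would set $h_r := c_3$ when $r$ is odd and $h_r := f_{n-2r}$ when $r$ is even, adopting the convention $f_0 = t$ (this is precisely the value produced by the displayed formula for $f_n$ at $n=0$). A direct count of the odd and even indices $r$ in $\{1,\dots,n/2\}$ identifies $g_n := \prod_{r=1}^{n/2} h_r$ with $c_3^{n/4}f_{n-4}\cdots f_4\,t$ when $4\mid n$, and with $c_3^{(n+2)/4}f_{n-4}\cdots f_2$ when $n\equiv 2\pmod 4$, so it suffices to prove that $g_n$ kills $i_*\alpha$. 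By the projection formula for the closed immersion $i\colon \Delta_{1,n}\hookrightarrow P^n$ this reduces to showing $i^*g_n\cdot\alpha = 0$ in $A^{\mbox{\tiny{$\bullet$}}}_{\mathrm{PGL}_2}(\Delta_{1,n})$, regarded as a module over $A^{\mbox{\tiny{$\bullet$}}}_{\mathrm{PGL}_2}(P^n)$.

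First I would record two vanishing statements, both obtained by pulling back along $\rho_r$ and using the identity $\rho_r^*t = t$ noted just before the statement. Because $c_2,c_3$ are pulled back from $\mathrm{BPGL}_2$ and $\rho_r^*t=t$, the class $\rho_r^*f_{n-2r}$ equals $f_{n-2r}(t,c_2,c_3)$ evaluated in $A^{\mbox{\tiny{$\bullet$}}}_{\mathrm{PGL}_2}(P^{n-2r}\times P^r)$; this is the pullback along the first projection of the defining relation of the projective bundle $P^{n-2r}$, hence $\rho_r^*f_{n-2r}=0$ for every $r$. Likewise $\rho_r^*c_3$ factors through $A^{\mbox{\tiny{$\bullet$}}}_{\mathrm{PGL}_2}(P^r)$, so by Lemma \ref{c3odd} it vanishes whenever $r$ is odd. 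In all cases $\rho_r^*h_r = 0$. Since $\pi_{r,n}$ is a universal homeomorphism over $\Delta_{r,n}\smallsetminus\Delta_{r+1,n}$, Proposition \ref{univ} identifies $A^{\mbox{\tiny{$\bullet$}}}_{\mathrm{PGL}_2}(\Delta_{r,n}\smallsetminus\Delta_{r+1,n})$ with the Chow groups of the corresponding open of $P^{n-2r}\times P^r$, compatibly with the $A^{\mbox{\tiny{$\bullet$}}}_{\mathrm{PGL}_2}(P^n)$-module structures; under this identification the restriction of $h_r$ to $\Delta_{r,n}\smallsetminus\Delta_{r+1,n}$ is the restriction of $\rho_r^*h_r = 0$, so it is zero.

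The proof is then a descending induction through the strata $\Delta_{1,n}\supset\Delta_{2,n}\supset\cdots$. I claim that $\big(\prod_{s\le r}i^*h_s\big)\cdot\alpha$ lies in the image of $(j_{r+1})_*\colon A^{\mbox{\tiny{$\bullet$}}}_{\mathrm{PGL}_2}(\Delta_{r+1,n})\to A^{\mbox{\tiny{$\bullet$}}}_{\mathrm{PGL}_2}(\Delta_{1,n})$. For $r=0$ the empty product gives $\alpha$ itself, which lies on $\Delta_{1,n}$. Assuming $\big(\prod_{s<r}i^*h_s\big)\cdot\alpha = (j_r)_*\beta_r$ with $j_r\colon\Delta_{r,n}\hookrightarrow\Delta_{1,n}$, the projection formula yields $\big(\prod_{s\le r}i^*h_s\big)\cdot\alpha = (j_r)_*\!\big((h_r|_{\Delta_{r,n}})\cdot\beta_r\big)$; since $h_r$ restricts to zero on the open stratum $\Delta_{r,n}\smallsetminus\Delta_{r+1,n}$, the class $(h_r|_{\Delta_{r,n}})\cdot\beta_r$ restricts to zero there, so by the localization sequence it is pushed forward from $A^{\mbox{\tiny{$\bullet$}}}_{\mathrm{PGL}_2}(\Delta_{r+1,n})$, which advances the induction. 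At $r=n/2$ the element $i^*g_n\cdot\alpha$ is supported on $\Delta_{n/2+1,n}=\varnothing$, hence vanishes, and $g_n\cdot i_*\alpha = i_*(i^*g_n\cdot\alpha)=0$.

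The main obstacle I expect is the bookkeeping of the module and projection-formula formalism over the \emph{singular} closed substacks $\Delta_{r,n}$: one must check that the $A^{\mbox{\tiny{$\bullet$}}}_{\mathrm{PGL}_2}(P^n)$-module structures, the localization sequences, and the universal-homeomorphism identifications of Proposition \ref{univ} are mutually compatible, so that restricting to an open stratum commutes both with multiplication by $h_r$ and with the pushforwards $(j_r)_*$. Once this compatibility is in place the computation is formal, and the only remaining subtlety is the combinatorial identification of $\prod_r h_r$ with the stated monomials, in particular the convention $f_0 = t$ that accounts for the extra factor of $t$ in the case $4\mid n$.
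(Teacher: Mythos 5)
Your argument is essentially the paper's own proof: the same descending induction through the strata $\Delta_{r,n}$, killing the class at the odd steps via the vanishing of $c_3$ on $P^{n-2r}\times P^r$ (Lemma \ref{c3odd}) and at the even steps via the projective bundle relation $f_{n-2r}=0$, with the same convention $f_0=t$; the only difference is that you track the class as a pushforward from $\Delta_{r+1,n}$ into $\Delta_{1,n}$ whereas the paper tracks it as an element of the kernel of restriction to $P^n\smallsetminus\Delta_{r+1,n}$, which is the same information via the localization sequence. The proposal is correct.
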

\begin{proof}
Let $i: \Delta_{1,n} \rightarrow P^n$ be the inclusion. We will also denote by $i$ all of its restrictions. Consider the sequence of maps $$P^{n-2}\! \smallsetminus \! \Delta_{n-2,1} \times P^1 \rightarrow \Delta_{1,n}\! \smallsetminus \! \Delta_{2,n} \xrightarrow{i} P^n \! \smallsetminus \! \Delta_{2,n}.$$  The pullback of $c_3$ to $A^{\mbox{\tiny$\bullet$}}_{\mathrm{PGL}_2}((P^{n-2}\! \smallsetminus \! \Delta_{n-2,1}) \times P^1)$ is zero by lemma (\ref{c3odd}) and $\Delta_{1,n}\! \smallsetminus \! \Delta_{2,n}$ is universally homeomorphic to $P^{n-2}\! \smallsetminus \! \Delta_{n-2,1} \times P^1$. Then by the compatibility of Chern classes with pushforwards that the pullback of $c_3$ through $i$ must be zero. This shows that $c_3 i_* \alpha =0$. As we already know that $c_3 i_* \alpha$ belongs to $A^{\mbox{\tiny{$\bullet$}}}_{\mathrm{PGL}_2}(P^n)$ it must belong to
 $$\operatorname{Ker}(A^{\mbox{\tiny{$\bullet$}}}_{\mathrm{PGL}_2}(P^n) \rightarrow A^{\mbox{\tiny{$\bullet$}}}_{\mathrm{PGL}_2}(P^n \! \smallsetminus \! \Delta_{2,n}))= i_* A^{\mbox{\tiny{$\bullet$}}}_{\mathrm{PGL}_2}(\Delta_{2,n}).$$
 
  Let $\beta \in A^2(\Delta_{2,n})$ be a preimage of $c_3 i_* \alpha$, and consider the sequence of maps $$P^{n-4}\! \smallsetminus \! \Delta_{n-4,1} \times P^2 \rightarrow \Delta_{2,n}\! \smallsetminus \! \Delta_{3,n} \xrightarrow{i} P^n \! \smallsetminus \! \Delta_{3,n}.$$ Let $\beta'$ be the pullback of $\beta$ to $\Delta_{n,2} \! \smallsetminus \! \Delta_{n,3}$. We can see $\beta'$ as an element of $A^2_{\mathrm{PGL}_2}((P^{n-4} \! \smallsetminus \! \Delta_{1,n-4}) \times P^2)$. we know that in this ring the equation $f_{n-4}(t,c_2,c_3)=0$ holds and as we are working mod $2$ the pullback of $t \in A^{1}_{\mathrm{PGL}_2}(P^n)$ is equal to $t \in A^1_{\mathrm{PGL}_2}(P^{i-4}\times P^2)$. Then we have 
$$i^* f_{n-4}(t,c_2,c_3)  = f_{n-4}(t,c_2,c_3)=0 \in A^{\mbox{\tiny{$\bullet$}}}((P^{n-4}\! \smallsetminus \! \Delta_{1,n-4})\times P^{2})$$
implying that $f_{n-4}(t,c_2,c_3) i_* \beta'=0$ in $A^{\mbox{\tiny{$\bullet$}}}_{\mathrm{PGL}_2}(P^n \! \smallsetminus \! \Delta_{3,n})$. As before, this proves that $c_3 f_{n-4} i_* \alpha$ belongs to the image of $A^{\mbox{\tiny{$\bullet$}}}_{\mathrm{PGL}_2}(\Delta_{3,n})$.

We can clearly repeat this reasoning inductively to move from $\Delta_{r,i}$ to $\Delta_{r+1,n}$, multiplying by $c_3$ and applying lemma (\ref{c3odd}) if $r$ is odd, and multiplying by $f_{n-2r}$ is $r$ is even. The last thing to note is that when $r=n/2$ the process ends and we obtain $0$, either multiplying by $f_0=t$ if $n$ is divisible by $4$ or by $c_3$ otherwise.
\end{proof}

\begin{corollary}
Assume $p=2$. then the maps $i_* : A^0_{\mathrm{PGL}_2}(\Delta_{1,n}) \rightarrow A^1_{\mathrm{PGL}_2}(P^n)$ are zero for $n \leq 8$.
\end{corollary}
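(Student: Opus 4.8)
The plan is to deduce the corollary from the preceding proposition together with an injectivity statement for the annihilating class it produces. For a class $\alpha \in A^0_{\mathrm{PGL}_2}(\Delta_{1,n})$ that proposition exhibits an explicit element $g_n \in A^{\bull}_{\mathrm{PGL}_2}(P^n)$ with $g_n \cdot i_*\alpha = 0$; since $i_*\alpha$ sits in codimension one, it suffices to prove that multiplication by $g_n$ is injective on $A^1_{\mathrm{PGL}_2}(P^n)$ for every even $n \le 8$. First I would record $g_n$ explicitly in the four relevant cases by unwinding the recursion of the previous proposition:
$$g_2 = c_3, \quad g_4 = c_3\, t, \quad g_6 = c_3^2(t^3 + c_2 t + c_3), \quad g_8 = c_3^2\, t^3(t^3 + c_2 t + c_3).$$

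Next I would pin down the codimension-one piece of $A^{\bull}_{\mathrm{PGL}_2}(P^n)$. By the projective bundle formula together with Proposition \ref{PGL_2}, the ring $A^{\bull}_{\mathrm{PGL}_2}(P^n)$ is free over $R := A^{\bull}_{\mathrm{PGL}_2}(\operatorname{Spec}(k_0)) = \operatorname{H}^{\bull}(k_0)[c_2,c_3]\{1, w_2, \tau\}$ with basis $1, t, \ldots, t^n$, subject to the single monic relation $f_n(t) = 0$ of $t$-degree $n+1$. A codimension count, using that $c_2, c_3, \tau, t$ have codimensions $2, 3, 1, 1$, shows that the codimension-one part is the free $\operatorname{H}^{\bull}(k_0)$-module
$$A^1_{\mathrm{PGL}_2}(P^n) = \operatorname{H}^{\bull}(k_0)\,\tau \oplus \operatorname{H}^{\bull}(k_0)\,t \oplus \operatorname{H}^{\bull}(k_0)\,w_2 t.$$

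The injectivity of $g_n$ on this module is then a direct coordinate computation. The decisive observation is that in each of the four cases the $t$-degree of $g_n$ satisfies $\deg_t(g_n) + 1 \le n$, so that $g_n \cdot x$ for $x \in A^1_{\mathrm{PGL}_2}(P^n)$ has $t$-degree at most $n$ and is already expressed in the basis $1, \ldots, t^n$ --- the relation $f_n = 0$ never intervenes. Multiplying a general element $a\tau + b t + c\, w_2 t$ by $g_n$ and extracting the highest occurring power of $t$ produces a coefficient of the form $c_3^{k}(b + c\, w_2)$ (the $\tau$-term contributes only to a strictly lower power of $t$); since $c_3$ is a non-zero-divisor of $\operatorname{H}^{\bull}(k_0)[c_2,c_3]$ and $1, w_2$ lie in distinct free components of $R$, this forces $b = c = 0$. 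The surviving term $a\, c_3^{k}\,\tau$ times a power of $t$ then forces $a = 0$ for the same reason, so $g_n$ is injective on $A^1_{\mathrm{PGL}_2}(P^n)$ and hence $i_*\alpha = 0$.

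The main obstacle, and the reason the statement is limited to $n \le 8$, is precisely the degree inequality $\deg_t(g_n) + 1 \le n$. For $n \ge 10$ the $t$-degree of $g_n$ overshoots $n-1$, so multiplying by $g_n$ forces a reduction modulo $f_n$ that introduces genuine relations and can annihilate classes in $A^1_{\mathrm{PGL}_2}(P^n)$, and the clean triangular argument above breaks down. Thus the only delicate point is to check the four inequalities $\deg_t(g_2) = 0$, $\deg_t(g_4) = 1$, $\deg_t(g_6) = 3$, $\deg_t(g_8) = 6$ against the bound $n-1$, after which injectivity is immediate from the freeness in Proposition \ref{PGL_2} and the fact that $c_3$ is a non-zero-divisor.
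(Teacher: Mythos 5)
Your proposal is correct and follows essentially the same route as the paper: decompose $i_*\alpha$ over the codimension-one part $\operatorname{H}^{\bull}(k_0)\tau \oplus \operatorname{H}^{\bull}(k_0)t \oplus \operatorname{H}^{\bull}(k_0)w_2 t$, invoke the annihilator $g_n$ from the preceding proposition, and observe that for $n\le 8$ the relation $f_n$ (monic of $t$-degree $n+1$) cannot interfere, so the leading $c_3$-power coefficient forces everything to vanish. Your explicit computation of the four classes $g_2,\dots,g_8$ and the degree bound $\deg_t(g_n)+1\le n$ is just a spelled-out version of the paper's statement that $f_n$ does not divide $g_n t$ for $n\le 8$.
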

\begin{proof}
Let $\alpha$ be an element of $A^0_{\mathrm{PGL}_2}(\Delta_{1,n})$. Its pushforward $i_* \alpha$ must be of the form $t \beta + \tau_{1,1}\gamma$ for some $\beta \in A^0_{\mathrm{PGL}_2}(\operatorname{Spec}(k_0))$ and some $\gamma \in \operatorname{H}^{\mbox{\tiny{$\bullet$}}}(k_0)$. We know by the previous lemma that $g_n i_* \alpha=0$ for an appropriate polynomial $g_n$ in $t, c_2, c_3$. Now it suffices to note that for $g_n i_* \alpha$ can only be zero if both $g_n t \beta$ and $ g_n \tau_{1,1} \gamma$ are zero. The first requires that either $\alpha = 0$ or $f_n \mid g_n t$. The second can only happen if $\gamma = 0 $ or $f_n \mid g_n$. For $n \leq 8$ $f_n$ does not divide $g_n t$, so we can conclude that both $\beta$ and $\gamma$ must be zero.
\end{proof}

Note that the reasoning above does not work for any $n>8$. Higher genus cases will require a different idea.

\begin{corollary}
Let $p=2$. Then for all even $2 \leq n \leq 8$ the cohomological invariants $\operatorname{Inv}^{\mbox{\tiny{$\bullet$}}}(\left[ P^n\! \smallsetminus \!\Delta_{1,n} /\mathrm{PGL}_2 \right])$ are freely generated as a $\operatorname{H}^{\mbox{\tiny{$\bullet$}}}(k_0)$-module by $1$ and elements $ x_1, \ldots , x_{n/2}, w_2 $, where the degree of $x_i$ is $i$ and $w_2$ is the second Stiefel-Whitney class coming from the cohomological invariants of $\mathrm{PGL}_2$.

If $p\neq 2$, then the cohomological invariants of $\left[ P^n\! \smallsetminus \!\Delta_{1,n}  /\mathrm{PGL}_2 \right]$ are trivial unless $p$ divides $n-1$, in which case they are generated as a $\operatorname{H}^{\mbox{\tiny{$\bullet$}}}(k_0)$-module by $1$ and a single nonzero invariant of degree $1$.
\end{corollary}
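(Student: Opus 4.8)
The plan is to read off all three terms of the localization exact sequence attached to the closed immersion $i:\Delta_{1,n}\hookrightarrow P^n$, which has codimension one. In the bottom degrees it reads
$$0\rightarrow A^0_{\mathrm{PGL}_2}(P^n)\xrightarrow{j^*}A^0_{\mathrm{PGL}_2}(P^n\! \smallsetminus\!\Delta_{1,n})\xrightarrow{\partial}A^0_{\mathrm{PGL}_2}(\Delta_{1,n})\xrightarrow{i_*}A^1_{\mathrm{PGL}_2}(P^n),$$
so $j^*$ is automatically injective and the image of $\partial$ is exactly $\ker(i_*)$. The first term is harmless: since $n$ is even, $P^n$ is the projectivization of the genuine $\mathrm{PGL}_2$-representation $E_n$, so by the projective bundle formula together with Proposition \ref{PGL_2} we have $A^0_{\mathrm{PGL}_2}(P^n)=A^0_{\mathrm{PGL}_2}(\operatorname{Spec}(k_0))=\operatorname{Inv}^{\mbox{\tiny{$\bullet$}}}(\mathrm{PGL}_2)$, which for $p=2$ is the free $\operatorname{H}^{\mbox{\tiny{$\bullet$}}}(k_0)$-module on $1$ and $w_2$, and for $p\neq 2$ is just $\operatorname{H}^{\mbox{\tiny{$\bullet$}}}(k_0)$.

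For $p=2$ and $n\leq 8$ I would first invoke the preceding corollary, which gives $i_*=0$; then $\partial$ is surjective and the displayed sequence becomes short exact. To compute the quotient term I chain the identifications provided by Corollary \ref{recurs3}: the statement $S_2(n)$ gives $A^0_{\mathrm{PGL}_2}(\Delta_{1,n})\simeq A^0_{\mathrm{PGL}_2}(\Delta_{1,n}\! \smallsetminus\!\Delta_{2,n})$; Proposition \ref{univ} identifies the latter, through the pushforward along the universal homeomorphism $\pi_{1,n}$, with $A^0_{\mathrm{PGL}_2}((P^{n-2}\! \smallsetminus\!\Delta_{1,n-2})\times P^1)$; and $S_1(n-2)$ presents this as the quotient of $A^0_{\mathrm{PGL}_2}(P^{n-2}\! \smallsetminus\!\Delta_{1,n-2})$ by the ideal generated by $w_2$. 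Arguing by induction on the even integer $n$, with base case $n=0$ (where $P^0$ is a point and the group is $\operatorname{Inv}^{\mbox{\tiny{$\bullet$}}}(\mathrm{PGL}_2)$, i.e.\ $S_1(0)$ is Proposition \ref{P^1}), the module $A^0_{\mathrm{PGL}_2}(P^{n-2}\! \smallsetminus\!\Delta_{1,n-2})$ is free on $1,x_1,\dots,x_{(n-2)/2},w_2$, so killing $w_2$ leaves a free module on $1,x_1,\dots,x_{(n-2)/2}$. Since this quotient is free, the short exact sequence splits over $\operatorname{H}^{\mbox{\tiny{$\bullet$}}}(k_0)$; lifting its basis back through $\partial$ (which lowers degree by one) yields $n/2$ new classes of degrees $1,2,\dots,n/2$, and together with the basis $1,w_2$ of the submodule these give precisely the asserted free basis $1,x_1,\dots,x_{n/2},w_2$.

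For $p\neq 2$ both end terms collapse: $A^0_{\mathrm{PGL}_2}(P^n)=\operatorname{H}^{\mbox{\tiny{$\bullet$}}}(k_0)$ as above, and $A^0_{\mathrm{PGL}_2}(\Delta_{1,n})=\operatorname{H}^{\mbox{\tiny{$\bullet$}}}(k_0)$ by Proposition \ref{even1}. Everything then hinges on the single map $i_*$, whose value on the generator $1$ is the fundamental class $[\Delta_{1,n}]$, sitting in the degree-zero part $\operatorname{CH}^1_{\mathrm{PGL}_2}(P^n)\otimes\mathbb{Z}/p\mathbb{Z}$ of $A^1_{\mathrm{PGL}_2}(P^n)$. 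Here I would compute this class directly: the discriminant of a binary form of degree $n$ is a polynomial of degree $2(n-1)$ in the coefficients, and the $\operatorname{Pic}(B\mathrm{PGL}_2)=\mathbb{Z}/2\mathbb{Z}$ contribution dies for $p$ odd, so modulo $p$ the class equals $2(n-1)\,t$ with $t=c_1(\mathcal{O}_{P^n}(-1))$. Because multiplication by $t$ is injective on $\operatorname{H}^{\mbox{\tiny{$\bullet$}}}(k_0)\cdot 1$ (the classes $1,t,\dots,t^n$ are a free basis by the projective bundle formula) and $2$ is invertible mod $p$, the $\operatorname{H}^{\mbox{\tiny{$\bullet$}}}(k_0)$-linear map $i_*$ is injective exactly when $p\nmid(n-1)$, forcing $\partial=0$ and trivial invariants, and is identically zero exactly when $p\mid(n-1)$, in which case $\partial$ is surjective and lifting $1$ produces a single new invariant in degree one.

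The main obstacle is the $p\neq 2$ analysis of $i_*$: one must compute $[\Delta_{1,n}]$ correctly via the discriminant degree and, crucially, verify that when $p\nmid(n-1)$ the map $i_*$ is \emph{injective} rather than merely nonzero, so that no spurious invariant survives. The $p=2$ case is essentially bookkeeping once the vanishing of $i_*$ from the preceding corollary and the inductive input $S_1(n-2)$ are available; the only point requiring care there is the degree shift under $\partial$, which is what aligns the lifted classes with the indexing $\deg x_i=i$ and explains the simultaneous appearance of $x_2$ and $w_2$ in degree two.
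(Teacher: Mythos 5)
Your argument is correct and follows the paper's own proof essentially step for step: the localization sequence for $\Delta_{1,n}\subset P^n$, the vanishing of $i_*$ from the preceding corollary (for $p=2$) making the sequence short exact and feeding the inductive identification $A^0_{\mathrm{PGL}_2}(\Delta_{1,n})\simeq A^0_{\mathrm{PGL}_2}((P^{n-2}\smallsetminus\Delta_{1,n-2})\times P^1)$ as a quotient of the previous stage, and, for $p\neq 2$, the observation that $i_*(1)=[\Delta_{1,n}]$ is a multiple of $t$ divisible by $p$ exactly when $p\mid n-1$. Your explicit check that $i_*$ is injective (not merely nonzero) when $p\nmid n-1$, and your computation of the discriminant degree $2(n-1)$, are details the paper leaves implicit, but they do not constitute a different route.
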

\begin{proof}

Assume $p=2$. The previous lemma allows us to apply corollary (\ref{recurs3}) repeatedly, together with the exact sequence
$$ 0 \rightarrow A^0_{\mathrm{PGL}_2}(P^n) \rightarrow A^0_{\mathrm{PGL}_2}(P^n \! \smallsetminus \! \Delta_{1,n} ) \rightarrow A^{0}_{\mathrm{PGL}_2}(\Delta_{1,n}) \rightarrow 0.$$

We know these groups for $P^2$ and $\Delta_{1,2}$ (which is isomorphic to $P^1$). Starting with these we can use the exact sequence to compute the groups inductively (using $A^{0}_{\mathrm{PGL}_2}(\Delta_{1,n}) \simeq A^{0}_{\mathrm{PGL}_2}((P^{n-2} \! \smallsetminus \! \Delta_{1,n-2}) \times P^1)$). At the $n$-th step we get that $$A^0_{\mathrm{PGL}_2}(P^n \! \smallsetminus \! \Delta_{1,n} )\simeq A^0_{\mathrm{PGL}_2}(P^n)  \oplus A^{0}_{\mathrm{PGL}_2}(\Delta_{1,n})\left[1 \right]$$ where the $\left[ 1 \right]$ means we are shifting all degrees up by $1$; note that the $\operatorname{H}^{\mbox{\tiny{$\bullet$}}}(k_0)$-modules in the exact sequence are all free, so it splits each time.

The case $p \neq 2$ is easy: we need to check the next step of the exact sequence, that is, the pushforward map $A^{0}_{\mathrm{PGL}_2}(\Delta_{1,n})\xrightarrow{i_*} A^{1}_{\mathrm{PGL}_2}(P^n)$. As $A^{0}_{\mathrm{PGL}_2}(\Delta_{1,n})$ and $A^0_{\mathrm{PGL}_2}(P^n)$ are both generated by $1$ as $\operatorname{H}^{\mbox{\tiny{$\bullet$}}}(k_0)$-modules, we only need to look at the image of $1$ through the map $i_*$. The image of $1$ is the class of $\Delta_{1,n}$ which is a multiple of $t$ in $A^{1}_{\mathrm{PGL}_2}(P^n)=\operatorname{H}^{\mbox{\tiny{$\bullet$}}}(k_0)\cdot t$, divisible by $p$ if and only if $p$ divides $n-1$.
\end{proof}

Before we complete our computation, we need one last lemma. Recall that by lemma (\ref{Gm}) we have $$A^{\mbox{\tiny{$\bullet$}}}_{\mathrm{PGL}_2 \times \mathrm{G}_m}(P^{8} \! \smallsetminus \! \Delta_{1,8})=A^{\mbox{\tiny{$\bullet$}}}_{\mathrm{PGL}_2}(P^{8} \! \smallsetminus \! \Delta_{1,8})[s]$$ where $s$ is an element in codimension $1$ and degree $0$.

\begin{lemma}\label{c1Gm}
Let $n$ be an odd integer. Consider the $\mathrm{PGL}_2 \times \mathrm{G}_m$ equivariant $\mathrm{G}_m$-torsor $$\left[ \mathbb{A}^{2n+3} \! \smallsetminus \! \Delta /\mathrm{PGL}_2 \times \mathrm{G}_m \right] \rightarrow \left[ P^{2n+2} \! \smallsetminus \! \Delta_{1,2n+2} /\mathrm{PGL}_2 \times \mathrm{G}_m \right]$$ and let $\mathcal{E}_n$ be the $\mathrm{PGL}_2 \times \mathrm{G}_m$ equivariant line bundle obtained by completing it. Then the class of $c_1(\mathcal{E}_n)$ in $A^{1}_{\mathrm{PGL}_2 \times \mathrm{G}_m}(P^{2n+2} \! \smallsetminus \! \Delta_{1,2n+2})$ is equal to $t-2s$.
\end{lemma}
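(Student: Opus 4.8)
The plan is to identify $\mathcal{E}_n$ explicitly as a $\mathrm{PGL}_2 \times \mathrm{G}_m$-equivariant line bundle on $P^{2n+2}$ and then read off its equivariant first Chern class from the way $G := \mathrm{PGL}_2 \times \mathrm{G}_m$ acts on the underlying space of binary forms. Throughout write $V = E_{2n+2}$ for the space of binary forms of degree $2n+2$, so that $P^{2n+2} = P(V)$ and $\mathbb{A}^{2n+3} \smallsetminus \Delta = X$ is the open complement of $\tilde{\Delta}_{1,2n+2}$ inside $V$, with $Z = P^{2n+2}\smallsetminus\Delta_{1,2n+2}$.

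First I would observe that the torsor $[X/G] \to [Z/G]$ is nothing but the complement of the zero section of the tautological bundle. The total space of $\mathcal{O}_{P(V)}(-1)$ is $\lbrace ([f],v) : v \in k\cdot f \rbrace$, and removing the zero section identifies it $G$-equivariantly with $V \smallsetminus \lbrace 0 \rbrace$; restricting to $Z$ recovers $X$ together with its scaling $\mathrm{G}_m$-torsor structure over $Z$. Since $G$ acts on $V$ linearly and preserves the tautological line, this exhibits $\mathcal{E}_n$ as $\mathcal{O}_{P(V)}(-1)|_Z$ endowed with the $G$-equivariant structure induced by the $G$-action on $V$. It therefore suffices to compute $c_1(\mathcal{O}_{P(V)}(-1)) \in A^1_G(P^{2n+2})$ and restrict it along $Z \hookrightarrow P^{2n+2}$.

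Next I would decompose the $G$-representation $V$. By definition the action is $([A],\alpha)\cdot f(x) = \operatorname{Det}(A)^{n+1}\alpha^{-2} f(A^{-1}x)$, and a direct check shows this descends to $\mathrm{PGL}_2$ on the first factor, so $V$ is the tensor product of the $\mathrm{PGL}_2$-representation $E_{2n+2}$ (on which the $\mathrm{G}_m$-factor acts trivially) with the one-dimensional character $\alpha \mapsto \alpha^{-2}$ of the $\mathrm{G}_m$-factor. Correspondingly $\mathcal{O}_{P(V)}(-1)$ is the tensor product of the $\mathrm{PGL}_2$-equivariant tautological bundle $\mathcal{O}_{P(E_{2n+2})}(-1)$ with the line bundle attached to the weight $-2$ character. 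Using that $c_1$ is additive on tensor products, that the first factor contributes $t = c_1(\mathcal{O}(-1))$ by definition of $t$, and that a character of weight $m$ of the $\mathrm{G}_m$-factor has equivariant first Chern class $ms$ (with $s$ the codimension-one generator from proposition (\ref{Gm})), I obtain $c_1(\mathcal{E}_n) = t - 2s$ in $A^1_{\mathrm{PGL}_2 \times \mathrm{G}_m}(P^{2n+2})$, and restriction to $Z$ gives the stated class.

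The only genuinely delicate point is the bookkeeping of the two distinct copies of $\mathrm{G}_m$ in play: the scaling torus used to form $P(V)$, responsible for the class $t$ via the tautological bundle, and the group-theoretic $\mathrm{G}_m$-factor of $G$, which acts on $V$ through the character $\alpha \mapsto \alpha^{-2}$ and is responsible for $s$. The main obstacle is thus to keep the weight of this character and the sign conventions for $t = c_1(\mathcal{O}(-1))$ and $s$ mutually consistent; once the splitting of $V$ is recorded, the conclusion is immediate from additivity of the first Chern class. The hypothesis that $n$ be odd only fixes the odd-genus regime in which the $\mathrm{PGL}_2 \times \mathrm{G}_m$-presentation is being applied, and $E_{2n+2}$ is a genuine $\mathrm{PGL}_2$-representation for every $n$, so the computation above is in fact insensitive to the parity of $n$.
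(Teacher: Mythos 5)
Your argument is correct, but it takes a different route from the paper: the paper disposes of this lemma with a one-line citation to Fulghesu--Viviani \cite[eq.~3.2]{FulgViv} (with $d=n+1$, $r=2$ in their notation), whereas you give a self-contained derivation. Your key observations are the right ones: the torsor $\left[X/G\right]\rightarrow\left[Z/G\right]$ is the complement of the zero section of the tautological subbundle $\mathcal{O}_{P(V)}(-1)$ with its induced $G$-equivariant structure, and as a $G=\mathrm{PGL}_2\times\mathrm{G}_m$-representation $V=E_{2n+2}$ splits as the $\operatorname{Det}^{\,n+1}$-twisted $\mathrm{PGL}_2$-representation (which is well defined on $\mathrm{PGL}_2$ precisely because $2n+2$ is even, so indeed for every $n$) tensored with the weight $-2$ character of the $\mathrm{G}_m$-factor; additivity of $c_1$ then gives $t-2s$. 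What your approach buys is transparency -- one sees exactly where the $-2$ comes from (the $\alpha^{-2}$ in the Arsie--Vistoli action of Theorem~\ref{H3_quot}) without unwinding the cyclic-covers setup of \cite{FulgViv} -- at the cost of having to fix a sign convention for the generator $s$ of Proposition~\ref{Gm}, which the paper leaves implicit; you correctly flag this, and since $s$ is only pinned down up to sign, normalizing $s=c_1(\chi_1)$ is harmless and your computation agrees with the quoted formula. You are also right that the oddness of $n$ plays no role in this particular Chern class computation; it matters only downstream, where the parity of the genus governs the structure of $A^{\mbox{\tiny{$\bullet$}}}_{\mathrm{PGL}_2}(P^{2n+2})$.
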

\begin{proof}
This is proven in \cite[eq. 3.2]{FulgViv}. Note that using the notation in \emph{loc.cit.} we have $d=n+1, r=2$.
\end{proof}

\begin{theorem}
Suppose that $p=2$ and $k_0$ is algebraically closed. Then the cohomological invariants of $\mathscr{H}_3$ are freely generated as an $\operatorname{H}^{\mbox{\tiny{$\bullet$}}}(k_0)$-module by $1$ and $x_1,x_2,w_2,x_3,x_4,x_5$, where the degree of $x_i$ is $i$ and $w_2$ is the second Stiefel-Whitney class coming from the cohomological invariants of $\mathrm{PGL}_2$.

In general, for $p=2$ the cohomological invariants of $\mathscr{H}_3$ fit in an exact sequence 
$$0\rightarrow M \rightarrow \operatorname{Inv}^{\mbox{\tiny{$\bullet$}}}(\mathscr{H}_3) \rightarrow K \rightarrow 0$$
 where $K$ is isomorphic to a submodule of $\operatorname{H}^{\mbox{\tiny{$\bullet$}}}(k_0)$, shifted up in degree by $5$, and $M$ is freely generated as a $\operatorname{H}^{\mbox{\tiny{$\bullet$}}}(k_0)$-module by $1$ and $x_1,x_2,w_2,x_3,x_4$, where the degree of $x_i$ is $i$ and $w_2$ is the second Stiefel-Whitney class coming from the cohomological invariants of $\mathrm{PGL}_2$.

If $p \neq 2$ for all odd $g$ the cohomological invariants of $\mathscr{H}_g$ are trivial  unless $p$ divides $2g+1$, in which case they are freely generated as a $\operatorname{H}^{\mbox{\tiny{$\bullet$}}}(k_0)$-module by $1$ and a single nonzero invariant of degree $1$.
\end{theorem}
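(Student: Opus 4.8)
The plan is to construct $\operatorname{Inv}^\bullet(\mathscr{H}_3)$ from the invariants of $[Z/G]$, where $Z = P^8 \smallsetminus \Delta_{1,8}$ and $G = \mathrm{PGL}_2 \times \mathrm{G}_m$, by analyzing the $\mathrm{G}_m$-torsor $\mathscr{H}_3 = [X/G] \to [Z/G]$. First, Proposition \ref{Gm} identifies $\operatorname{Inv}^\bullet([Z/G])$ with $\operatorname{Inv}^\bullet([Z/\mathrm{PGL}_2])$, which the preceding corollary (case $n=8$, $p=2$) computes to be the free $\operatorname{H}^\bullet(k_0)$-module $M$ on $1, x_1, x_2, w_2, x_3, x_4$; for $p \neq 2$ the same corollary gives that this module is trivial unless $p \mid 2g+1$, and of rank two when $p\mid 2g+1$. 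I would then complete the torsor to a line bundle $\mathcal{E}$ on $[Z/G]$ and feed the zero-section inclusion into the localization sequence. Because $A^{-1}_G(Z)=0$ and the pushforward along the zero section is multiplication by $c_1(\mathcal{E})$ (using that $\mathcal{E}$ is an affine bundle over $[Z/G]$), the codimension-zero part collapses to
$$0 \to A^0_G(Z) \to A^0(\mathscr{H}_3) \xrightarrow{\partial} \ker\!\bigl(A^0_G(Z) \xrightarrow{c_1(\mathcal{E})} A^1_G(Z)\bigr) \to 0.$$
Since $\partial$ lowers degree by one, after identifying $A^0(\mathscr{H}_3)=\operatorname{Inv}^\bullet(\mathscr{H}_3)$ via Proposition \ref{inv-chow} this is exactly the asserted sequence $0 \to M \to \operatorname{Inv}^\bullet(\mathscr{H}_3) \to K \to 0$, with $K$ equal to the kernel of $c_1(\mathcal{E})$ shifted up in degree by one. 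By Lemma \ref{c1Gm} (with $n=3$, so $2n+2=8$) we have $c_1(\mathcal{E}) = t-2s$; for $p=2$ this is simply $t$, and as $t$ has $s$-degree zero the whole problem reduces to the kernel of multiplication by $t$ on $A^0_{\mathrm{PGL}_2}(Z)$.

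The heart of the argument, and the step I expect to be hardest, is computing $\ker(t : A^0_{\mathrm{PGL}_2}(Z) \to A^1_{\mathrm{PGL}_2}(Z))$. I would use the splitting $A^0_{\mathrm{PGL}_2}(Z) \cong A^0_{\mathrm{PGL}_2}(P^8) \oplus A^0_{\mathrm{PGL}_2}(\Delta_{1,8})[1]$ coming from the (now short exact) localization sequence for $\Delta_{1,8} \subset P^8$. On the first summand $\langle 1, w_2\rangle$ multiplication by $t$ is injective: since the pushforward $A^0_{\mathrm{PGL}_2}(\Delta_{1,8}) \to A^1_{\mathrm{PGL}_2}(P^8)$ vanishes for $n\le 8$ (preceding corollary), the restriction $A^1_{\mathrm{PGL}_2}(P^8) \hookrightarrow A^1_{\mathrm{PGL}_2}(Z)$ is injective, and the presentation $A^\bullet_{\mathrm{PGL}_2}(P^8) = A^\bullet_{\mathrm{PGL}_2}(\operatorname{Spec} k_0)[t]/(f_8)$ with $f_8 = t^3(t^3+c_2t+c_3)^2$ shows $t\cdot 1$ and $t\cdot w_2$ are nonzero and independent. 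Using that $\partial$ is $A^\bullet_{\mathrm{PGL}_2}(P^8)$-linear, an element of the second summand lies in $\ker t$ only if its boundary lies in $\ker\!\bigl(t : A^0_{\mathrm{PGL}_2}(\Delta_{1,8}) \to A^1_{\mathrm{PGL}_2}(\Delta_{1,8})\bigr)$. Via the universal homeomorphism $\Delta_{1,8}\smallsetminus\Delta_{2,8} \simeq (P^6\smallsetminus\Delta_{1,6})\times P^1$ and the identity $(i\circ\pi)^* t = t$, together with Lemma \ref{c3odd}, this reduces to the same question one dimension down, and I would run the whole thing as an induction whose conclusion is that $\ker(t)$ is generated by the top class $x_4$ (the one produced from the deepest stratum). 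The two delicate points are verifying that the $P^8$-component of $t\cdot\alpha$ cannot cancel the boundary contribution, and determining the exact annihilator of $t\cdot x_4$ over a general field.

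To conclude: over an algebraically closed $k_0$ one has $\operatorname{H}^\bullet(k_0)=\mathbb{F}_2$, so $\ker(t)=\mathbb{F}_2\cdot x_4$ lives in degree $4$; shifting up by one produces exactly one new generator $x_5$ in degree $5$, yielding the first statement. Over a general field the same computation exhibits $\ker(t)$ as isomorphic to a submodule of $\operatorname{H}^\bullet(k_0)$ supported from degree $4$ on (the annihilator of $t\cdot x_4$), which after the degree shift is the module $K$. Finally, for $p \neq 2$ the torsor contributes nothing: $c_1(\mathcal{E}) = t-2s$ has $s$-component $-2$, a unit multiple, so multiplication by it is injective on $A^0_G(Z)$, forcing $K=0$ and $\operatorname{Inv}^\bullet(\mathscr{H}_g) \cong \operatorname{Inv}^\bullet([Z/G])$; combined with the $p\neq 2$ half of the preceding corollary (valid for all odd $g$ through Lemma \ref{c1Gm}) this gives triviality unless $p\mid 2g+1$ and a single degree-one invariant otherwise.
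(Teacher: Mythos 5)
Your overall strategy coincides with the paper's: reduce to $[P^8\smallsetminus\Delta_{1,8}/\mathrm{PGL}_2]$ via Proposition (\ref{Gm}), analyze the $\mathrm{G}_m$-torsor through the kernel of $c_1(\mathcal{E})=t-2s$, show injectivity of $t$ on everything except possibly the $x_4$-component by pushing boundaries down the strata, and dispose of $p\neq 2$ by observing that $2s$ is a unit multiple of $s$. The setup, the use of Lemma (\ref{c1Gm}), and the identification of $K$ with the degree-shifted kernel are all correct and match the paper.

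However, there is a genuine gap in your treatment of the algebraically closed case. You write that since $\operatorname{H}^{\mbox{\tiny{$\bullet$}}}(k_0)=\mathbb{F}_2$, one gets $\ker(t)=\mathbb{F}_2\cdot x_4$ and hence the new generator $x_5$. This does not follow: establishing that $K$ is a submodule of $\operatorname{H}^{\mbox{\tiny{$\bullet$}}}(k_0)\cdot x_4$ only tells you $K$ is $0$ or $\mathbb{F}_2\cdot x_4$, and you must actually prove $t\,x_4=0$ to conclude that $x_5$ exists. (You flag "the exact annihilator of $t\cdot x_4$" as delicate but then assert the answer without an argument; note also the tension with your earlier claim that the induction shows $\ker(t)$ is \emph{generated by} $x_4$, which over a general field is false — the paper only proves containment in that module.) The paper devotes a substantial argument to this point: it cannot simply run the same descent used for $x_1,x_2,x_3$, because $A^0_{\mathrm{PGL}_2}(\Delta_{2,n})$ contains positive-degree classes coming from $A^{\mbox{\tiny{$\bullet$}}}_{\mathrm{PGL}_2}(\operatorname{Spec}(k_0))$ ($w_2$, $\tau_{1,1}$) that could obstruct the vanishing. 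The paper's fix is to carry an extra $P^1$ factor through the whole chain — working in $A^1_{\mathrm{PGL}_2}((P^n\smallsetminus\Delta_{1,n})\times P^1)$ — because by Proposition (\ref{P^1}) the $P^1$ factor kills $w_2,c_3,\tau$, which forces the relevant groups to live in low degree; combined with the base observation that $\partial(t x_1)=0$ in $A^1_{\mathrm{PGL}_2}(\Delta_{1,2})$ over an algebraically closed field, this yields $t x_4=0$. Without this (or an equivalent) argument your proof establishes only the general-field exact sequence, not the freeness statement with the degree-$5$ generator.
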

\begin{proof} 
 We begin with the case $p=2$. First, we observe that by proposition (\ref{Gm}) the map $$\left[ P^8 \! \smallsetminus \! \Delta_{1,8} / \mathrm{PGL}_2 \right] \rightarrow \left[ P^8 \! \smallsetminus \! \Delta_{1,8} / \mathrm{PGL}_2 \times \mathrm{G}_m \right]$$ induces an isomorphism on cohomological invariants.

We need to understand whether the $\mathrm{G}_m$-torsor $$\mathscr{H}_3 \rightarrow \left[ P^8 \! \smallsetminus \! \Delta_{1,8} /\mathrm{PGL}_2 \times \mathrm{G}_m \right]$$ generates any new cohomological invariant (note that it cannot kill any existing invariant as it is the composition of a line bundle and an open immersion, both of which induce injective pullbacks).

\vspace{0.5cm}
 Write again $\mathrm{G}= \mathrm{PGL}_2 \times \mathrm{G}_m$. The above amounts to understanding the exact sequence 
 $$ 0 \rightarrow A^{0}_{\mathrm{G}}(P^{8} \! \smallsetminus \! \Delta_{1,8}) \rightarrow A^{0}_{\mathrm{G}}(\mathbb{A}^{9}\! \smallsetminus \! \Delta) \xrightarrow{\partial} A^{0}_{\mathrm{G}}(P^{8} \! \smallsetminus \! \Delta_{1,8}) \xrightarrow{c_1(\mathcal{E})} A^{1}_{\mathrm{G}}(P^{8} \! \smallsetminus \! \Delta_{1,8})$$where $\mathcal{E}$ is the line bundle associated to the $\mathrm{G}_m$ bundle $$\left[ \mathbb{A}^9 \! \smallsetminus \! \Delta /\mathrm{PGL}_2 \times \mathrm{G}_m \right] \rightarrow \left[ P^8 \! \smallsetminus \! \Delta_{1,8} /\mathrm{PGL}_2 \times \mathrm{G}_m \right].$$
 
This is the same as understanding the kernel of the first Chern class of $\mathcal{E}$, and for $p=2$ this is just the first Chern class $t$ of $\mathscr{O}_{P^8}(-1)$  by lemma (\ref{c1Gm}). Then by the formula in lemma (\ref{Gm}) to understand the kernel of $c_1(\mathcal{E})$ we can reduce to $A^1_{\mathrm{PGL}_2}(P^8 \!\smallsetminus\! \Delta_{1,8})$. First we will show that $t, t x_i, t w_2$ each generate a free $\operatorname{H}^{\mbox{\tiny{$\bullet$}}}(k_0)$-module in $A^1_{\mathrm{PGL}_2}(P^8 \!\smallsetminus\! \Delta_{1,8})$, and then we will deal with their $\operatorname{H}^{\mbox{\tiny{$\bullet$}}}(k_0)$-linear combinations. 

Let $\alpha$ be a non-zero element in $\operatorname{H}^{\mbox{\tiny{$\bullet$}}}(k_0)$. The map $$A^{1}_{\mathrm{PGL}_2}(P^8)\rightarrow A^{1}_{\mathrm{PGL}_2}(P^8\! \smallsetminus \! \Delta_{1,8})$$ is injective (its kernel is the image of $A^0_{G}(\Delta_{1,8})$ which is zero), so we know that $ t \alpha$ and $ t  \alpha w_2$ cannot be zero. For the remaining elements we can follow the same reasoning we used in proving the result for $g$ even in \cite[4.1]{Pir2}. For $x_1,x_2, x_3$ we inductively show that they can not be annihilated by $\alpha t$. Consider $$\alpha x_i \in A^0_{\mathrm{PGL}_2}(P^n \! \smallsetminus \! \Delta_{1,n}).$$ We use the new exact sequence $$0 \rightarrow A^{1}_{\mathrm{PGL}_2}(P^n \! \smallsetminus \! \Delta_{2,n}) \rightarrow  A^{1}_{\mathrm{PGL}_2}(P^n \! \smallsetminus \! \Delta_{1,n}) \rightarrow A^{1}_{\mathrm{PGL}_2}(\Delta_{1,n} \! \smallsetminus \! \Delta_{2,n}).$$

 By the compatibility of the boundary map with Chern classes and multiplication with elements coming from $A^{\mbox{\tiny{$\bullet$}}}_{\mathrm{PGL}_2}(k_0)\supset \operatorname{H}^{\mbox{\tiny{$\bullet$}}}(k_0)$, the boundary $\partial(t \alpha x_i)$ restricts to $$t \alpha x_{i-1} \in A^1_{\mathrm{PGL}_2}((P^{n-2}\! \smallsetminus \! \Delta_{1,n-2}) \times P^1)=A^{1}_{\mathrm{PGL}_2}(\Delta_{1,n} \! \smallsetminus \! \Delta_{2,n}).$$ If $\partial(t \alpha x_i)$ is not zero then $t \alpha x_i$ cannot be zero either, and moreover we can restrict to checking that $$t \alpha x_{i-1} \in  A^1_{\mathrm{PGL}_2}(P^{n-2}\! \smallsetminus \! \Delta_{1,n-2})$$ is not zero by lemma \ref{recurs1}.
  Each time we use the reasoning above the degree lowers by one, and eventually we will end up with $$\partial(t \alpha x_1)=t \cdot \alpha \in A^1_{\mathrm{PGL}_2}(P^{n-2i}\! \smallsetminus \! \Delta_{1,n-2i})$$ so it suffices to prove that for $n \geq 2$ the element $t \alpha$ is not zero in the one-codimensional group $A^1_{\mathrm{PGL}_2}((P^{n}\! \smallsetminus \! \Delta_{1,n}) \times P^1)$. This is true as the class of $\Delta_{1,n}$ is equal to zero mod $2$, and thus $A^1_{\mathrm{PGL}_2}((P^{n}\! \smallsetminus \! \Delta_{1,n}) \times P^1)$ has the same elements in degree zero as $A^1_{\mathrm{PGL}_2}(P^{n})$.
  
  Now consider a linear combination $$v=\alpha_0 + \beta w_2 + \alpha_1 x_1 + \alpha_2 x_2 + \alpha_3 x_3.$$
  
 We want to prove that $t v$ is not zero in $A^1_{\mathrm{PGL}_2}(P^8 \!\smallsetminus\! \Delta_{1,8})$. Suppose that $t v=0$. By following the reasoning above, we can take the boundary $\partial$ three times to reduce our element to $t \alpha_3 \in A^1_{\mathrm{PGL}_2}(P^2 \! \smallsetminus \! \Delta_{1,2})$. As above, this element can only be zero if $\alpha_3$ is zero. Now we apply the same idea, taking two boundaries, to get the element $t \alpha_2 \in A^1_{\mathrm{PGL}_2}(P^4 \! \smallsetminus \! \Delta_{1,4})$. Again we conclude that $\alpha_2$ must be zero. Clearly the same reasoning now shows that $\alpha_1$ must be zero too, so we are left with $v=\alpha_0 + \beta w_2$, and the element $t(\alpha_0 + \beta w_2)$ cannot be zero unless $\alpha_0$ and $\beta$ are both zero as the map $$A^{1}_{\mathrm{PGL}_2}(P^8)\rightarrow A^{1}_{\mathrm{PGL}_2}(P^8\! \smallsetminus \! \Delta_{1,8})$$ is injective. 
   This shows that the map $c_1(\mathcal{E})$ is injective when restricted to the free $\operatorname{H}^{\mbox{\tiny{$\bullet$}}}(k_0)$-module generated by $1,x_1, w_2, x_2, x_3$, and if $K$ is the kernel of $c_1(\mathcal{E})$ its projection to the free $\operatorname{H}^{\mbox{\tiny{$\bullet$}}}(k_0)$-module generated by $x_4$ must be injective. Thus we get an exact sequence $$0 \rightarrow A^{0}_{G}(P^{8} \! \smallsetminus \! \Delta_{1,8}) \rightarrow \operatorname{Inv}^{\mbox{\tiny{$\bullet$}}}(\mathscr{H}_3) \rightarrow K \rightarrow 0$$ where $K$ is a submodule of $\operatorname{H}^{\mbox{\tiny{$\bullet$}}}(k_0)\cdot x_4$, shifted up in degree by one as the boundary $\partial$ lowers degree by one. This proves the statement on general fields. 

\vspace{0.5cm}

Let us now assume that $k_0$ is algebraically closed. We want to show that $tx_4$ is equal to $0$ in $A^{1}_{\mathrm{G}}(P^8 \! \smallsetminus \! \Delta_{1,8})$. Then there must be an element $x_5$ in $A^{0}_{\mathrm{G}}(\mathbb{A}^{9}\! \smallsetminus \! \Delta) $ whose boundary $\partial(x_5)$ is equal to $x_4$.

 Note that when $n =2$ the element $\partial (t x_1)$ is indeed zero as $t \in A^{1}_{\mathrm{PGL}_2}(P^2)$ pulls back to zero in $A^1_{\mathrm{PGL}_2}(\Delta_{1,2})$ and there are no elements of degree one in $A^1_{\mathrm{PGL}_2}(P^2)$ when $k_0$ is algebraically closed. This shows that the situation is different than for $x_1, \ldots, x_3$. Even though $k_0$ is now algebraically closed, so that $\operatorname{H}^{\mbox{\tiny{$\bullet$}}}(k_0)=\mathbb{Z}/2\mathbb{Z}$, the matter is a bit more complicated than  in \cite[4.1]{Pir2} for $x_4$ as there are elements of positive degree in $A^0_{\mathrm{PGL}_2}(\Delta_{2,n})$ coming from $A^{\mbox{\tiny{$\bullet$}}}_{\mathrm{PGL}_2}(\operatorname{Spec}(k_0))$. To get around this problem, we make the following consideration. Recall the exact sequence given by the inclusion of $\Delta_{1,8}\! \smallsetminus \! \Delta_{2,8}$ in $P^8\! \smallsetminus \! \Delta_{2,8}$:
 
  $$ 0 \rightarrow A^1_{\mathrm{PGL}_2}(P^8\! \smallsetminus \! \Delta_{2,8}) \rightarrow A^1_{\mathrm{PGL}_2}(P^8 \! \smallsetminus \! \Delta_{1,8} ) \rightarrow A^{1}_{\mathrm{PGL}_2}(\Delta_{1,8}\! \smallsetminus \! \Delta_{2,8}) .$$
 
  There are no elements of degree $4$ in $A^1_{\mathrm{PGL}_2}(P^8\! \smallsetminus \! \Delta_{2,8})$ (because the degree of such elements can be at most the degree of an element of $A^0_{\mathrm{PGL}_2}(\Delta_{2,8})$ plus one, i.e. three), so $t x_4$ is zero if and only if its boundary $\partial(t x_4)$ is zero in $\Delta_{1,8}$. As there are no elements of degree three in $A^0_{\mathrm{PGL}_2}(\Delta_{2,8})$ by lemma (\ref{recurs2}), this is equivalent to asking that $\partial(t x_4)$ is zero in $$A^{1}_{\mathrm{PGL}_2}(\Delta_{1,8}\! \smallsetminus \! \Delta_{2,8})=A^1_{\mathrm{PGL}_2}((P^6 \! \smallsetminus \! \Delta_{1,6})\times P^1).$$
 
  As the boundary of $t x_4$ is the element $t x_3$ in $A^1_{\mathrm{PGL}_2}((P^6 \! \smallsetminus \! \Delta_{1,6})\times P^1)$ we can continue our reasoning on $(P^n \! \smallsetminus \! \Delta_{1,n})\times P^1$. The $P^1$ factor kills all elements of positive degree in $A^{\mbox{\tiny{$\bullet$}}}_{\mathrm{PGL}_2}(P^n\times P^1)$ by proposition (\ref{P^1}) and the projective bundle formula, so we can conclude that $A^0_{\mathrm{PGL}_2}(\Delta_{2,n}\times P^1)$ is trivial using the same argument as in lemma (\ref{recurs2}). This implies that $A^1_{\mathrm{PGL}_2}((P^8\! \smallsetminus \! \Delta_{2,8}) \times P^1)$ can contain elements of degree at most one. Then using   
 $$A^1_{\mathrm{PGL}_2}((P^6\! \smallsetminus \! \Delta_{2,6})\times P^1) \rightarrow A^1_{\mathrm{PGL}_2}((P^6 \! \smallsetminus \! \Delta_{1,6})\times P^1 ) \rightarrow A^{1}_{\mathrm{PGL}_2}((\Delta_{1,6}\! \smallsetminus \! \Delta_{2,6})\times P^1)$$ we conclude that $tx_3$ is zero if and only if its boundary $tx_2$ is zero in  $$A^{1}_{\mathrm{PGL}_2}((\Delta_{1,6}\! \smallsetminus \! \Delta_{2,6})\times P^1)=A^1_{\mathrm{PGL}_2}((P^4\! \smallsetminus \! \Delta_{1,4})\times P^1\times P^1).$$
 
 We can repeat the same reasoning again, reducing our claim to $$t x_1 = 0 \in A^1_{\mathrm{PGL}_2}((P^2\! \smallsetminus \! \Delta_{1,2})\times P^1).$$ 
 As we remarked above when $n=2$ we have $\partial (t x_1)=0 \in A^1_{\mathrm{PGL}_2}(\Delta_{1,2})$, and $A^{1}_{\mathrm{PGL}_2}(P^2 \times P^1)$ only contains elements of degree zero, so looking at the exact sequence $$ A^{1}_{\mathrm{PGL}_2}(P^2 \times P^1) \rightarrow A^1_{\mathrm{PGL}_2}((P^2\! \smallsetminus \! \Delta_{1,2})\times P^1) \xrightarrow{\partial} A^1_{\mathrm{PGL}_2}(\Delta_{1,2})$$ we conclude that $t x_1$ must be equal to $0$.
 
 \vspace{0.5cm}
 
Finally we deal with the case $p \neq 2$. Denote by $\mathcal{E}_n$ the line bundle obtained by extending the $\mathrm{G}_m$ bundle $$\left[ (\mathbb{A}^{n+1}\! \smallsetminus \! \Delta) /\mathrm{PGL}_2 \times \mathrm{G}_m \right]\rightarrow \left[(P^n\! \smallsetminus \! \Delta_{1,n})/\mathrm{PGL}_2 \times \mathrm{G}_m \right]$$ using again the exact sequence above we only have (at worst) to check whether the products $c_1(\mathcal{E}_n)\cdot 1, c_1(\mathcal{E}_n)\cdot x_1$ are $\operatorname{H}^{\mbox{\tiny{$\bullet$}}}$-linearly dependent inside $A^{1}_{\mathrm{PGL}_2\times \mathrm{G}_m}(P^n \! \smallsetminus \! \Delta_{1,n})$, in which case we would see some new cohomological invariant appearing.

 Consider a linear combination $v=\alpha + \beta x_1$, and assume that $c_1(\mathcal{E}_n)\cdot v=0$. We can take the boundary of $c_1(\mathcal{E}_n)\cdot v$, which by (\ref{c1Gm}) is equal to $$(t-2s)\cdot \beta \in A^{1}_{\mathrm{PGL}_2\times \mathrm{G}_m}((P^{n-2} \! \smallsetminus \! \Delta_{1,n-2})\times P^1).$$
 
  For this element to be zero it would have to be equal to a multiple of the class of $\Delta_{1,n}$ in $A^{1}_{\mathrm{PGL}_2\times \mathrm{G}_m}(P^n \! \smallsetminus \! \Delta_{1,n})$, which never happens as this class is a multiple of $t$ and $2s$ is not divisible by $p$. This shows that $\beta=0$. The we are left with $v=\alpha$ for some $\alpha \in \operatorname{H}^{\mbox{\tiny{$\bullet$}}}$, and again $$(t-2s)\cdot \alpha \in A^{1}_{\mathrm{PGL}_2\times \mathrm{G}_m}(P^{n} \! \smallsetminus \! \Delta_{1,n})$$ cannot be zero for the same reason.
\end{proof}

\end{document}